\theoremstyle{definition}
\newtheorem{theorem}{Theorem}[section]
\newtheorem{theoremx}{Theorem}
\numberwithin{equation}{section}
\newtheorem{corollary}[theorem]{Corollary}
\newtheorem{lemma}[theorem]{Lemma}
\newtheorem{proposition}[theorem]{Proposition}
\newtheorem{notation}[theorem]{Notation}
\newtheorem*{claim*}{Claim}
\newcommand{\DHS}{\Delta{\rm HS}}
\theoremstyle{definition}
\newtheorem{definition}[theorem]{Definition}
\newtheorem{example}[theorem]{Example}
\newtheorem{remark}[theorem]{Remark}
\newtheoremstyle{TheoremNum}
        {8pt}{8pt}              
        {\upshape}                      
        {}                              
        {\bfseries}                     
        {.}                             
        {.5em}                             
        {\thmname{#1}\thmnote{ \bfseries #3}}
  \theoremstyle{TheoremNum}
\newcommand{\m}{\mathfrak{m}}
\newcommand{\n}{\mathfrak{n}}
\newcommand{\ZZ}{\mathbb{Z}}
\newcommand{\QQ}{\mathbb{Q}}
\newcommand{\Hom}{\operatorname{Hom}}
\newcommand{\Ext}{\operatorname{Ext}}
\newcommand{\Ass}{\operatorname{Ass}}	
\newcommand{\depth}{\operatorname{depth}}
\newcommand{\Ht}{\operatorname{ht}}
\newcommand{\MM}{\mathcal{M}}
\newcommand{\NNN}{\mathcal{N}}
\newcommand{\HF}{\operatorname{HF}}
\newcommand{\HS}{\operatorname{HS}}
\newcommand{\ds}{\displaystyle}
\newcommand{\p}{\mathfrak{p}}
\newcommand{\ov}[1]{\overline{#1}}
\newcommand{\ann}{\operatorname{ann}}
\newcommand{\Q}{\mathbb{Q}}
\newcommand{\ps}[1]{\llbracket {#1} \rrbracket}
\newcommand{\PP}{\mathbb{P}}
\renewcommand{\leq}{\leqslant}
\renewcommand{\geq}{\geqslant}
\newcommand{\gin}{\operatorname{gin}}
\newcommand{\sat}{\operatorname{{sat}}}
\newcommand{\kk}{\Bbbk}
\newcommand{\soc}{\operatorname{soc}}
\newcommand{\Edepth}{\operatorname{E-depth}}
\newcommand{\In}{\operatorname{in}}
\newcommand{\rev}[1]{\operatorname{{\rm rev}_{#1}}}
\newcommand{\ale}[1]{{\color{red} \sf $\star$ Alessandro: [#1]}}
\newcommand{\giulio}[1]{{\color{blue} \sf $\star$ [#1]}}
\title[Decomposition of local cohomology tables and E-depth]{Decomposition of local cohomology tables of modules with large E-depth}
\author{Giulio Caviglia}
\address{Department of Mathematics, Purdue University, 150 N. University Street, West Lafayette, IN 47907-2067, USA}
\email{gcavigli@purdue.edu}
\author{Alessandro De Stefani}
\address{Dipartimento di Matematica, Universit{\`a} di Genova, Via Dodecaneso 35, 16146 Genova, Italy}
\email{destefani@dima.unige.it}
\subjclass[2010]{13D45, Secondary: 13P10, 13D07}
\keywords{Local cohomology tables, general initial modules, revlex-orders, sequentially Cohen-Macaulay modules}
\dedicatory{Dedicated to Professor Bernd Ulrich on the occasion of his 65th birthday}
\begin{document}

\begin{abstract}
We introduce the notion of $\Edepth$ of graded modules over polynomial rings to measure the depth of certain $\Ext$ modules. First, we characterize graded modules over polynomial rings with (sufficiently) large $\Edepth$ as those modules whose (sufficiently) partial general initial submodules preserve the Hilbert function of local cohomology modules supported at the irrelevant maximal ideal, extending a result of Herzog and Sbarra on sequentially Cohen-Macaulay modules. Second, we describe the cone of local cohomology tables of modules with sufficiently high $\Edepth$, building on previous work of the second author and Smirnov. Finally, we obtain a non-Artinian version of a socle-lemma proved by Kustin and Ulrich.
\end{abstract} 

\maketitle

\section{Introduction}
Let $S=\kk[x_1,\ldots,x_n]$ be a standard graded polynomial ring over an infinite field $\kk$. A finitely generated $\ZZ$-graded $S$-module $M$ is called sequentially Cohen-Macaulay if, for each integer $i$, the module $\Ext^i_S(M,S)$ is either zero, or Cohen-Macaulay of maximal possible dimension $n-i$. Sequentially Cohen-Macaulay modules were introduced by Stanley \cite{Stanley} from a different point of view (see Definition \ref{Defn SCM}), and later reinterpreted by Peskine as above (for instance, see \cite[Theorem 1.4]{HerzogSbarra}).

This definition suggests to consider modules for which each $\Ext^i_S(M,S)$, if not Cohen-Macaulay of maximal dimension, at least has ``sufficiently large'' depth. To better quantify this, we introduce a numerical invariant of a graded module, which we call $\Edepth$ (see Definition \ref{Definition Edepth}). Sequentially Cohen-Macaulay $S$-modules can be characterized as those which have maximal $\Edepth$, equal to $n$ (see Proposition \ref{Proposition SCM and Edepth}). On the other hand, modules with large $\Edepth$ still satisfy desirable properties. For instance, if $M$ is a module of positive depth and positive $\Edepth$, and $\ell \in S$ is a sufficiently general linear form, then the Hilbert function of the modules $H^i_\m(M)$ can be read from that of $H^{i-1}_\m(M/\ell M)$ for all $i > 0$. Here, $H^i_\m(-)$ denotes the $i$-th graded local cohomology functor, with support in the irrelevant maximal ideal $\m$ of $S$. 

Section \ref{Section Edepth} is devoted to study how the $\Edepth$ behaves under some basic operations (see Proposition \ref{proposition Edepth modulo filter regular element}), and to provide a key example of modules with positive $\Edepth$, which is crucially used in the following sections (see Example \ref{key example}). 

The starting point of Section \ref{Section gin} is an important characterization of sequentially Cohen-Macaulay modules in terms of generic initial modules, due to Herzog and Sbarra \cite{HerzogSbarra}, that we now recall. Let $\HF(-)$ denote the Hilbert function of a $\ZZ$-graded $S$-module. Let $M$ be a finitely generated $\ZZ$-graded $S$-module, that we write as a quotient of a graded free module $F$ by a graded submodule $U$. Then $M \cong F/U$ is sequentially Cohen-Macaulay if and only if $\HF(H^i_\m(F/U)) = \HF(H^i_\m(F/\gin_{\rm revlex}(U)))$.

In order to extend this, for any given integer $t \in \{0,\ldots,n\}$ we introduce a weight-order, denoted by $\rev{t}$, and we consider general initial modules $\gin_{\rev{t}}$. 

We characterize modules with sufficiently large $\Edepth$:

\begin{theoremx}[see Theorem \ref{theorem gin}] \label{THMX A}
Let $S=\kk[x_1,\ldots,x_n]$, with the standard grading, and $M$ be a finitely generated graded $S$-module. Write $M=F/U$, where $F$ is a graded free $S$-module, and $U$ is a graded submodule of $F$. For a given integer $t \geq 0$, we have that $\Edepth(M) \geq t$ if and only if $\HF(H^i_\m(F/U)) = \HF(H^i_\m(F/\gin_{{\rm rev}_t}(U)))$ for all $i \in \ZZ$.
\end{theoremx}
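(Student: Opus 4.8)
\emph{Strategy.} The plan is to induct on $t$, running in parallel a recursion for $\Edepth$ and a recursion for the partial general initial module, so that the inductive hypothesis supplies the desired equivalence one variable down. Fix the presentation $M=F/U$. Since $\kk$ is infinite, after applying a sufficiently general $g\in {\rm GL}_n(\kk)$ — which alters neither $\HF(H^i_\m(F/U))$ nor, by definition, $\gin_{{\rm rev}_t}(U)$ — we may and do assume the coordinates are generic, so that $x_n$ behaves as a generic linear form: it is filter-regular on $M$, on each $\Ext^j_S(M,S)$, and on the colon modules that will appear. Write $\bar S=S/(x_n)$, $\bar F=F\otimes_S\bar S$, and $\bar V$ for the image in $\bar F$ of a submodule $V\subseteq F$; thus $\bar M=\bar F/\bar U=M/x_nM$ is the generic hyperplane section.

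\emph{The inequality.} For every $t$ and every $i$ one has $\HF(H^i_\m(F/U))\le\HF(H^i_\m(F/\gin_{{\rm rev}_t}(U)))$ componentwise: indeed $\gin_{{\rm rev}_t}(U)=\In_{{\rm rev}_t}(gU)$ for generic $g$, the quotient $F/\In_{{\rm rev}_t}(gU)$ is the special fiber of the flat one-parameter Gröbner degeneration of $F/gU\cong F/U$, and the functions $j\mapsto\dim_\kk H^i_\m(-)_j$ are upper semicontinuous along such a family since the local cohomology modules involved are Artinian, hence finite dimensional in each degree (this semicontinuity underlies both the Herzog--Sbarra theorem and the work of the second author and Smirnov). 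Consequently ``$\HF(H^i_\m(F/U))=\HF(H^i_\m(F/\gin_{{\rm rev}_t}(U)))$ for all $i$'' is precisely the assertion that $F/U$ meets these upper bounds, and it is this that must be matched with the condition $\Edepth(M)\ge t$.

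\emph{The induction.} For $t=0$ both conditions hold for every $M$: $\gin_{{\rm rev}_0}(U)=gU$ for generic $g$, which preserves $\HF(H^i_\m(-))$, while $\Edepth(M)\ge 0$ always. Let $t\ge 1$ and assume the theorem over every polynomial ring for $t-1$. On the algebraic side, Proposition \ref{proposition Edepth modulo filter regular element} and the behaviour of $\Ext$ under a generic hyperplane section reduce $\Edepth(M)\ge t$ to the inequality $\Edepth(\bar M)\ge t-1$ together with a condition on $\depth M$ and $H^0_\m(M)$ (one passes to $\bar M/H^0_\m(\bar M)$, or to $U^{\sat}$, when $\depth M=0$); moreover, once $\Edepth(M)\ge 1$, the long exact sequence of $0\to M(-1)\xrightarrow{x_n}M\to\bar M\to 0$ collapses to the degree-by-degree relation flagged in the introduction, recovering $\HF(H^i_\m(M))$ from $\HF(H^{i-1}_\m(\bar M))$ for $i\ge1$ and from $\HF(H^0_\m(M))$ — the colon modules appearing in this collapse having positive $\Edepth$ by Example \ref{key example}. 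On the Gröbner side one needs the analogues of the classical revlex facts: $x_n$ is filter-regular on $F/\gin_{{\rm rev}_t}(U)$, the colon $\gin_{{\rm rev}_t}(U):x_n$ is again a ${\rm rev}_t$ general initial module of the appropriate submodule, and $\overline{\gin_{{\rm rev}_t}(U)}$ equals $\gin_{{\rm rev}_{t-1}}(\bar U)$ up to that colon; these force the same reading-off relation for $F/\gin_{{\rm rev}_t}(U)$ unconditionally, expressing $\HF(H^i_\m(F/\gin_{{\rm rev}_t}(U)))$ through $\HF(H^{i-1}_\m(\bar F/\gin_{{\rm rev}_{t-1}}(\bar U)))$. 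Now apply the inductive hypothesis over $\bar S$: the equality $\HF(H^i_\m(\bar F/\bar U))=\HF(H^i_\m(\bar F/\gin_{{\rm rev}_{t-1}}(\bar U)))$ for all $i$ is equivalent to $\Edepth(\bar M)\ge t-1$. Feeding this into the two reading-off relations — which agree term by term because $x_n$ is filter-regular on $F/U$ as soon as $\Edepth(M)\ge1$, and always on the gin side — and matching the $H^0_\m$/$\depth$ data via the inequality of the preceding paragraph, one obtains that $\HF(H^i_\m(F/U))=\HF(H^i_\m(F/\gin_{{\rm rev}_t}(U)))$ for all $i$ if and only if $\Edepth(M)\ge t$. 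This closes the induction.

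\emph{Main obstacle.} The crux is the Gröbner recursion: that $x_n$ is filter-regular on $F/\gin_{{\rm rev}_t}(U)$, that forming $\gin_{{\rm rev}_t}$ commutes with reduction modulo a generic linear form up to the colon by it, and that this colon is compatibly again a $\gin_{{\rm rev}_t}$. Each of these requires a careful study of the weight order ${\rm rev}_t$ under generic coordinates — Galligo-type stability of the initial module, and the behaviour of Gröbner bases under the specialization $x_n\mapsto 0$ and under colons. Intertwined with it is the $\depth M=0$ bookkeeping, where $U$ must be replaced by its saturation and one uses that, for orders of type ${\rm rev}_t$, saturation is realized as the colon by the last variable. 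The semicontinuity input is what permits phrasing everything as ``attaining an upper bound'', so that at each stage the two recursions need only be compared as one-sided inequalities rather than as exact identities.
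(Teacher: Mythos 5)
Your approach is essentially the paper's: induct on $t$, pass to a generic hyperplane section $N/x_n N$ with $N=F/U^{\sat}$, compare Hilbert functions of local cohomology via the short exact sequences coming from $0\to N(-1)\xrightarrow{x_n} N\to N/x_n N\to 0$ and Lemma \ref{lemma positive Edepth ses}, and use Proposition \ref{proposition Edepth modulo filter regular element} to carry the $\Edepth$ bound across. That is indeed the structure that works, and you are right that it parallels Herzog--Sbarra.

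However there is a genuine gap, which you yourself flag as the ``main obstacle'' without resolving it. The induction requires the Gr\"obner-side recursion: after reducing $\gin_{\rev{t}}(U)$ modulo $x_n$ (and saturating), the resulting submodule of a free $S_{n-1}$-module must again be a \emph{general} initial module, this time for $\rev{t-1}$, of the corresponding reduction of $U$. This is precisely Lemma \ref{Lemma gin Green}, and it is not a formal consequence of ``revlex behaves well with colons and $x_n\mapsto 0$.'' The paper proves it by factoring the change of coordinates, analyzing the decomposition $U' = \bigsqcup_i U'_{[i]}x_n^i$ in the style of Green's partial elimination ideals, and checking that the resulting open condition over $S_{n-1}$ is non-empty. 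Without this your induction does not close, because you cannot invoke the inductive hypothesis over $S_{n-1}$ for the module $\ov{V}$.

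A second, smaller issue is in the converse direction. You present it as if both recursions deliver two-sided ``reading-off'' identities that one then matches. In the paper, the converse is an argument by contradiction: assuming equality of all $\HF(H^i_\m)$, one shows that multiplication by $x_n$ is surjective on every $H^j_\m(F/U^{\sat})$ by combining the would-be strict inequality with the upper-semicontinuity bound $\HF(H^{i-1}_\m(F/(U^{\sat}+x_n F)))\leq \HF(H^{i-1}_\m(F/\In_{\rev t}(U^{\sat}+x_n F)))$ and the identity $\In_{\rev t}(U^{\sat}+x_n F)=V^{\sat}+x_nF$. This is what establishes $\Edepth(F/U)>0$ in the first place (it is not given to you a priori), and at the same time yields the equality of Hilbert functions needed to descend to $S_{n-1}$. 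Your semicontinuity remark is pointed in the right direction but the contradiction step, and the fact that it must be proved \emph{before} one can say ``$x_n$ behaves as in the $\Edepth>0$ case on the $F/U$ side,'' are not spelled out. Finally, note that the reduction is taken modulo $x_n$ on $N=M/H^0_\m(M)$ (equivalently via $U^{\sat}$), not on $M$ itself; your phrasing $\bar M=M/x_n M$ elides this distinction, which matters precisely when $\depth M=0$.
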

Keeping in mind that $M$ is sequentially Cohen-Macaulay if and only if $\Edepth(M)=n$, and that an initial ideal with respect to $\rev{n}$ coincides with the initial ideal with respect to the usual revlex order, Theorem \ref{THMX A} can be viewed as an extension of \cite[Theorem 3.1]{HerzogSbarra}.

In Section \ref{Section Decomposition}, we focus on studying the cone generated by local cohomology tables of $S$-modules $M$ with sufficiently large $\Edepth$. 

Let $M$ be a finitely generated $\ZZ$-graded $S$-module. We let $[H^\bullet_\m(M)] \in {\rm Mat}_{n+1,\ZZ}(\QQ)$ be the matrix whose $(i+1,j)$-th entry records $\dim_\kk(H^i_\m(M)_j)$, and we consider the cone 
\[
\QQ_{\geq 0} \cdot \{[H^\bullet_\m(M)] \mid M \text{ is a finitely generated } \ZZ\text{-graded } S\text{-module}\}.
\] 
The study of this object was initiated in \cite{DSS} by Smirnov and the second author. Its interest is motivated by the well-known Boij-S{\"o}derberg theory for the cone of Betti diagrams \cite{BoijSoderberg}. Eisenbud and Schreyer proved the conjectures for the cone of Betti diagrams of Cohen-Macaulay modules by exhibiting a subtle duality with the cone of cohomology tables of vector bundles on projective space \cite{ES}. Later, Boij and S{\"o}derberg extended the techniques employed in \cite{ES} to all coherent sheaves, obtaining a description of the full cone of Betti diagrams of finitely generated graded $S$-modules \cite{BoijSoderberg2}. Motivated by the original Boij-S{\"o}derberg theory, and given that local cohomology and sheaf cohomology are very much connected, Daniel Erman asked whether one could describe the cone of local cohomology tables of finitely generated graded $S$-modules. More specifically, this means whether one can identify the extremal rays of such cone, and the equations of its supporting hyperplanes.

The main results of \cite{DSS} contain a complete description of the extremal rays of the cone of local cohomology tables of modules of dimension at most $2$, as well as the equations of the supporting hyperplanes. We improve this result by determing the extremal rays of the cone spanned by modules with sufficienty large $\Edepth$.

\begin{theoremx}[see Theorem \ref{theorem decomposition LC Edepth}] \label{THMX B}
Let $S=\kk[x_1,\ldots,x_n]$, with the standard grading, and $M$ be a $\ZZ$-graded $S$-module with $\Edepth(M) \geq n-2$. For $i=0,\ldots,n$ let $S_i = \kk[x_1,\ldots,x_i]$, and let $J=(x_1,x_2)S$. We have a decomposition
\[
\ds [H^\bullet_\m(M)] = \sum_{i=0}^{n} \sum_{j \in \ZZ} r_{i,j} [H^\bullet_\m(S_i(-j))]  + \sum_{m>0} \sum_{j \in \ZZ}  r'_{m,j} [H^\bullet_\m(J^m(-j))], 
\]
where $r_{i,j} \in \ZZ_{\geq 0}$, $r'_{m,j} \in \QQ_{\geq 0}$, and all but finitely of them are equal to zero. Moreover, the  set 
\[
\ds \Lambda= \{[H^\bullet_\m(\kk[x_1,\ldots,x_s](-j)], [H^\bullet_\m(J^m(-j))] \mid 0\leq s \leq n, j \in \ZZ, m > 0\}
\]
is minimal, and it describes the extremal rays of the cone spanned by local cohomology tables of modules of $\Edepth$ at least $n-2$.
\end{theoremx}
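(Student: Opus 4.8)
The statement comprises an \emph{existence} assertion --- that every $[H^\bullet_\m(M)]$ with $\Edepth(M)\geq n-2$ decomposes as displayed, with $r_{i,j}\in\ZZ_{\geq 0}$ on the $S_i$-blocks and $r'_{m,j}\in\QQ_{\geq 0}$ on the $J^m$-blocks --- and a \emph{minimality} assertion --- that $\Lambda$ is exactly the set of extremal rays of the resulting cone and that no proper subset of $\Lambda$ spans it. For minimality one must also know that $\Lambda$ lies in the cone: each $S_i=\kk[x_1,\dots,x_i]$ is Cohen-Macaulay, hence sequentially Cohen-Macaulay, hence of maximal $\Edepth$ equal to $n$; and for $J^m=(x_1,x_2)^mS$ one computes $\Ext^0_S(J^m,S)\cong S$ and $\Ext^j_S(J^m,S)=0$ for $j\geq 2$, so $\Edepth(J^m)\geq n-2$ (see also Example \ref{key example}). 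The plan is to prove the existence assertion by splitting $[H^\bullet_\m(M)]$ row by row --- rows $0,\dots,n-2$ via the $\Ext$-description of $\Edepth$, rows $n-1$ and $n$ by reduction to the two-dimensional theory of \cite{DSS} --- and then to prove minimality by a triangularity argument. By Theorem \ref{theorem gin} applied with $t=n-2$ one may replace $U$ with $\gin_{\rev{n-2}}(U)$ without changing $[H^\bullet_\m(M)]$, so there is no loss in assuming $M$ is given in generic coordinates.

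\emph{Rows $0$ through $n-2$.} For $k=0$ the module $H^0_\m(M)$ has finite length, so $\HF(H^0_\m(M))$ is trivially a non-negative integer combination of the functions $\HF(H^0_\m(S_0(-j)))=\HF(\kk(-j))$. For $1\leq k\leq n-2$, the hypothesis $\Edepth(M)\geq n-2$ forces, by Definition \ref{Definition Edepth} (compare Proposition \ref{Proposition SCM and Edepth}), the module $\Ext^{n-k}_S(M,S)$ to be zero or Cohen-Macaulay of its maximal dimension $k$; by graded local duality $H^k_\m(M)$ is then, up to shift, the Matlis dual of a module which, after a general linear change of coordinates making $x_1,\dots,x_k$ a homogeneous system of parameters for it, is free over $S_k=\kk[x_1,\dots,x_k]$. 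As the Hilbert function of local cohomology is invariant under such a change, $\HF(H^k_\m(M))=\sum_j r_{k,j}\HF(H^k_\m(S_k(-j)))$ for suitable $r_{k,j}\in\ZZ_{\geq 0}$. Since $[H^\bullet_\m(S_i(-j))]$ is supported only in cohomological degree $i$ when $i\leq n-2$, these identities account exactly for rows $0,\dots,n-2$ of $[H^\bullet_\m(M)]$, contributing the first sum in the statement.

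\emph{Rows $n-1$ and $n$.} It remains to decompose the pair $\bigl(\HF(H^{n-1}_\m(M)),\HF(H^n_\m(M))\bigr)$. Two facts about the remaining blocks are used: $H^n_\m(J^m(-j))\cong H^n_\m(S(-j))\cong H^n_\m(S_n(-j))$, so $J^m$ and $S_n$ behave identically in row $n$; and $H^{n-1}_\m(J^m(-j))\cong H^{n-2}_\m(S/(x_1,x_2)^m)(-j)$, which by the K\"unneth isomorphism $H^{n-2}_\m(S/(x_1,x_2)^m)\cong\bigl(\kk[x_1,x_2]/(x_1,x_2)^m\bigr)\otimes_\kk H^{n-2}_\m(\kk[x_3,\dots,x_n])$ --- together with the fact that $S/(x_1,x_2)^m$ is Cohen-Macaulay of dimension $n-2$ --- has eventual polynomial degree $n-3$, one less than that of $H^{n-1}_\m(S_{n-1}(-j))$. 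Modulo these identifications the decomposition of rows $n-1$ and $n$ is precisely the description of the cone of local cohomology tables of modules of dimension at most two from \cite{DSS}, transported to the ring $S$, and this is the only place where the rational coefficients $r'_{m,j}$ occur. I would carry out this reduction either by iterating general linear sections --- using the behaviour of local cohomology Hilbert functions modulo a general linear form from Section \ref{Section Edepth} together with Proposition \ref{proposition Edepth modulo filter regular element} --- until the ambient ring is $\kk[x_1,x_2]$, or by applying the two-dimensional decomposition directly, via local duality, to $\Hom_S(M,S)$ and $\Ext^1_S(M,S)$.

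\emph{Minimality and extremality.} It suffices to show that no $\tau\in\Lambda$ lies in the cone generated by $\Lambda\setminus\{\tau\}$; combined with the existence assertion, this yields both that $\Lambda$ cannot be shrunk and that it is exactly the set of extremal rays. If $\tau=[H^\bullet_\m(S_i(-j))]$ with $i\leq n-2$, then $\tau$ is supported only in row $i$, so only $S_i(-j')$-blocks can occur in a non-negative representation of $\tau$; the rightmost non-zero entry of row $i$ of $[H^\bullet_\m(S_i(-j'))]$ lies in internal degree $-i-j'$ and equals $1$, so comparing rightmost entries forces the coefficient of $S_i(-j)$ to be $1$ and then, after subtraction, all other coefficients to vanish. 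If $i\in\{n-1,n\}$, one first uses the non-negativity of local cohomology Hilbert functions in whichever of rows $n-1$, $n$ is not occupied by $\tau$ to kill the coefficients of all blocks meeting that row, reducing to the previous situation. If $\tau=[H^\bullet_\m(J^m(-j))]$, the row-$n$ entries --- identical for $J^{m'}$ and $S_n$ --- pin down triangularly that the only surviving shift is $-j$; the degree-$(n-2)$ growth of the $S_{n-1}$-blocks in row $n-1$, against the degree-$(n-3)$ growth of the $J^{m'}$-blocks there, then kills the $S_{n-1}$-coefficients; and among the $J^{m'}(-j)$ the rightmost non-zero entry of row $n-1$ occurs in internal degree $m'-n+1-j$ with value $m'$, increasing in $m'$, so only the coefficient of $J^m(-j)$ can survive. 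The main obstacle is the middle step: making the reduction of the top two rows to \cite{DSS} precise --- verifying that the linear-section process genuinely produces the two-variable situation (or that the duality argument is reversible) and that the blocks $S_{n-1}$, $S_n$ and $J^m$ truly suffice there. The row-by-row splitting and the triangularity computations are otherwise routine.
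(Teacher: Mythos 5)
Your overall strategy is the right one and, at the level of the skeleton, matches the paper's: pass to a partial general initial module via Theorem~\ref{theorem gin}, decompose the low rows of the table row by row, and reduce the top rows to the two-dimensional cone from \cite{DSS}. The minimality argument you sketch is also the same triangularity argument the paper uses (the paper defers the details to the two-dimensional case and Equation~(\ref{Eq Delta}), but your rightmost-entry comparisons are the same computations). So you are not proving a different theorem or finding a genuinely new route.

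Where you diverge is in the treatment of rows $0$ through $n-2$. You decompose those rows using the $\Ext$-characterization of $\Edepth$ together with local duality: $\Ext^{n-k}_S(M,S)$ is Cohen--Macaulay of dimension $k$ (or zero), hence free over a Noether normalization $S_k$, and its Matlis dual has the Hilbert function of a direct sum of shifts of $H^k_\m(S_k)$. This works for the Hilbert-function claim and is a legitimate alternative to what the paper does, which is to exploit the $\ZZ\times\ZZ^{n-2}$-grading on $F/\gin_{\rev{n-2}}(U)$ and Proposition~\ref{proposition LC} to get the tensor-product isomorphisms $H^i_\m(M)\cong\MM_i\otimes_\kk H^i_\m(S_i)$ directly, without passing through $\Ext$ and duality. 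The paper's route has the advantage that the same machinery (Lemma~\ref{lemma family N} plus the K\"unneth formula in Proposition~\ref{proposition LC}) handles both the low rows and the top block, which is exactly what is missing from your write-up.

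And that is the genuine gap, which you yourself name: you do not make precise how the top two (really, three) rows reduce to the two-dimensional cone. The decomposition from \cite{DSS} applies to the full local cohomology table $[H^\bullet_{\m_2}(N)]$ of a module $N$ over $\kk[x_1,x_2]$ --- all three rows at once --- so one needs to produce a concrete two-dimensional module $N$ whose table, after a controlled transformation, is rows $n-2,n-1,n$ of $[H^\bullet_\m(M)]$. The paper produces exactly this $N=\NNN_2$ in Lemma~\ref{lemma family N}: since $F/\gin_{\rev{n-2}}(U)$ is $\ZZ\times\ZZ^{n-2}$-graded and $x_n,\ldots,x_3$ is a filter regular sequence in that multigrading, one has $\NNN_j/H^0_{\m_j}(\NNN_j)\cong\NNN_{j-1}\otimes_\kk\kk[x_j]$ at each step, and Proposition~\ref{proposition LC} then yields $H^i_\m(M)\cong H^{i-(n-2)}_{\m_2}(\NNN_2)\otimes_\kk H^{n-2}_{\m_{n-2}}(S_{n-2})$ for $n-2\leq i\leq n$, which is the explicit dictionary you need. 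Your suggestion of ``iterating general linear sections'' is morally this, but the multigrading is what makes the iterated saturations and quotients literally tensor products rather than merely exact-sequence approximations --- the argument does not go through if you just cut by general linear forms on an arbitrary module with $\Edepth\geq n-2$. A second, smaller issue: because you account for row $n-2$ separately before invoking the two-dimensional theory, you should check that the two steps are consistent; this is automatic (only $S_0$-blocks of the two-dimensional table touch row $0$ of $\NNN_2$, which becomes row $n-2$ of $M$ under the K\"unneth dictionary, recovering your $r_{n-2,j}$), but it should be said, or better, avoided by decomposing rows $n-2,n-1,n$ together as the paper does.
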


If $M$ is an $S$-module satisfying $\Edepth(M)\geq \dim(M)-2$, then one can still apply Theorem \ref{THMX B} (see Remark \ref{Remark NN}). In particular, since modules $M$ of dimension at most two automatically satisfy $\Edepth(M) \geq \dim(M)-2$, Theorem \ref{THMX B} is indeed an extension of \cite[Theorem 4.6]{DSS}.

Using Theorem \ref{THMX B} and a description of the facets of the cone of local cohomology tables in dimension two \cite[Theorem 6.2]{DSS}, we provide equations for the supporting hyperplanes of the cone of local cohomology tables of modules $M$ with $\Edepth(M) \geq n-2$ (see Theorem \ref{facet thm}). This description becomes particularly manageable in the case of sequentially Cohen-Macaulay modules: let $\mathbb{M}$ be the $\QQ$-vector space of $(n+1) \times \ZZ$ matrices with finite support. Given the local cohomology table $[H^\bullet_\m(M)]$ of a finitely generated $\ZZ$-graded $S$-module, we can produce a new table $\Delta[H^\bullet_\m(M)]$ which belongs to $\mathbb{M}$ (see Section \ref{Section Decomposition}, or \cite[Section 6]{DSS} for more details about this construction). Consider the cone 
\[
\mathcal C^{\rm seq} = \QQ_{\geq 0} \{\Delta[H^\bullet_\m(M)] \mid M \text{ is a sequentially Cohen-Macaulay } \ZZ\text{-graded } S\text{-module}\}.
\]
\begin{theoremx}[see Proposition \ref{hyperplanes SCM} and Theorem \ref{facet thm}]
Let $A=(a_{i,j}) \in \mathbb{M}$. Then $A \in \mathcal C^{\rm seq}$ if and only if $a_{i,j} \geq 0$ for all integers $i$ and $j$.
\end{theoremx}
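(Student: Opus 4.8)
The plan is to show that $\mathcal C^{\rm seq}$ is exactly the nonnegative orthant of $\mathbb{M}$, proving the two inclusions separately and using only that $\Delta$ is $\QQ$-linear, that it sends every $[H^\bullet_\m(M)]$ into $\mathbb{M}$, and the computation of $\Delta$ on the building blocks recorded in Section~\ref{Section Decomposition}: for $0\le i\le n$ and $j\in\ZZ$ the table $\Delta[H^\bullet_\m(\kk[x_1,\ldots,x_i](-j))]$ is a positive rational multiple of a standard basis element $e_{i,\sigma_i(j)}$ of $\mathbb{M}$ (a matrix with a single nonzero entry, located in row $i$), and for fixed $i$ the map $\sigma_i$ is an affine bijection $\ZZ\to\ZZ$, so these tables exhaust all positive multiples of the basis elements in row $i$. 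Observe also that $\kk[x_1,\ldots,x_i](-j)$, viewed as an $S$-module via $S\twoheadrightarrow \kk[x_1,\ldots,x_i]$, is Cohen--Macaulay of dimension $i$, hence a fortiori sequentially Cohen--Macaulay.

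For $\mathcal C^{\rm seq}\subseteq\{A : a_{i,j}\ge 0\}$ --- the content of Proposition~\ref{hyperplanes SCM} --- it suffices, by linearity of $\Delta$, to show that whenever $M$ is sequentially Cohen--Macaulay the table $[H^\bullet_\m(M)]$ is a nonnegative $\QQ$-combination of the tables $[H^\bullet_\m(\kk[x_1,\ldots,x_i](-j))]$; applying $\Delta$ then exhibits $\Delta[H^\bullet_\m(M)]$ as a nonnegative combination of positive multiples of standard basis vectors, hence entrywise nonnegative. To get such a decomposition, use the sequentially Cohen--Macaulay filtration $0=M_0\subset\cdots\subset M_r=M$, with $N_k:=M_k/M_{k-1}$ Cohen--Macaulay of dimension $d_k$ and $d_1<\cdots<d_r$. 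Since $\dim M_{k-1}<d_k$, the long exact sequence in local cohomology of $0\to M_{k-1}\to M_k\to N_k\to 0$ degenerates to give $H^{i}_\m(M_{k})\cong H^{i}_\m(M_{k-1})$ for $i\ne d_k$ and $H^{d_k}_\m(M_k)\cong H^{d_k}_\m(N_k)$, whence $[H^\bullet_\m(M)]=\sum_{k=1}^r[H^\bullet_\m(N_k)]$. It then remains to treat a single Cohen--Macaulay module $N$ of dimension $d$: its only nonzero local cohomology is $H^d_\m(N)$, and by graded local duality $\dim_\kk H^d_\m(N)_j=\dim_\kk (K_N)_{-j}$ where $K_N=\Ext^{n-d}_S(N,\omega_S)$ is again Cohen--Macaulay of dimension $d$; reducing $K_N$ modulo a maximal $K_N$-regular sequence of general linear forms shows $\HS(K_N)(t)(1-t)^d=\sum_j c_j t^j$ has all coefficients $c_j\ge 0$, being the Hilbert function of an Artinian module. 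Matching this against the same computation for $\kk[x_1,\ldots,x_d](-j)$ yields $[H^\bullet_\m(N)]=\sum_j c_j\,[H^\bullet_\m(\kk[x_1,\ldots,x_d](-j'(j)))]$, as needed. In particular no $J^m(-j)$ blocks occur --- this is exactly where $\Edepth(M)=n$ is stronger than $\Edepth(M)\ge n-2$, cf.\ Theorem~\ref{theorem decomposition LC Edepth}.

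For the reverse inclusion $\{A : a_{i,j}\ge 0\}\subseteq\mathcal C^{\rm seq}$, write $A=\sum_{i,j}a_{i,j}e_{i,j}$, a finite nonnegative combination, and recall $\mathcal C^{\rm seq}$ is a convex cone; hence it is enough to place each $e_{i,j}$ in $\mathcal C^{\rm seq}$. Picking $j'$ with $\sigma_i(j')=j$, the table $e_{i,j}$ is a positive multiple of $\Delta[H^\bullet_\m(\kk[x_1,\ldots,x_i](-j'))]$ and $\kk[x_1,\ldots,x_i](-j')$ is sequentially Cohen--Macaulay, so $e_{i,j}\in\mathcal C^{\rm seq}$. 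Combined with the first inclusion this gives $A\in\mathcal C^{\rm seq}$ if and only if $a_{i,j}\ge 0$ for all $i,j$, and at the same time identifies the coordinate hyperplanes as precisely the supporting hyperplanes of $\mathcal C^{\rm seq}$ --- the statement of Theorem~\ref{facet thm} in the sequentially Cohen--Macaulay case.

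The one genuinely delicate input, which I would take from Section~\ref{Section Decomposition}, is the behaviour of $\Delta$ on the building blocks: since $H^i_\m(\kk[x_1,\ldots,x_i](-j))$ has an infinite polynomial tail of degree $i-1$, one needs $\Delta$ to collapse it to a single entry \emph{of the correct sign} (this forces $\Delta$ to carry a sign such as $(-1)^i$), and one must keep track of the affine bijection $\sigma_i$ between grading shifts and columns. Everything else is routine: the additivity $[H^\bullet_\m(M)]=\sum_k[H^\bullet_\m(N_k)]$ along a sequentially Cohen--Macaulay filtration, and the Artinian-reduction argument giving nonnegativity of the relevant $h$-vector. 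A minor point worth being explicit about is that the positivity used in the first inclusion is of the automatic kind --- a Hilbert function of an Artinian reduction --- and not a finer property of $h$-vectors that could fail.
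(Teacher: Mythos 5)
Your argument is correct, and the reverse inclusion (decomposing a non-negative table into the standard basis vectors $\Delta[H^\bullet_\m(S_i(-j-i))]$, which by (\ref{Eq Delta}) have a single nonzero entry equal to $1$ in row $i$) is essentially the paper's own proof. The forward inclusion, however, is established by a genuinely different route. The paper deduces it from Corollary \ref{Corollary LC decomposition SCM}, which rests on the gin machinery of Section \ref{Section gin}: Corollary \ref{gin SCM} (Herzog--Sbarra) to replace $M$ by a general initial module, then Proposition \ref{proposition LC} to obtain $H^i_\m(M)\cong \MM_i\otimes_\kk H^i_\m(S_i)$, hence a decomposition of $[H^\bullet_\m(M)]$ into tables $[H^\bullet_\m(S_i(-j))]$ with non-negative integer coefficients. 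You argue directly from the dimension filtration in Definition \ref{Defn SCM}: since the Cohen--Macaulay subquotients $N_k$ have strictly increasing dimensions, the local cohomology long exact sequence degenerates and the local cohomology table is additive along the filtration, $[H^\bullet_\m(M)]=\sum_k[H^\bullet_\m(N_k)]$; then for a single Cohen--Macaulay piece $N$ of dimension $d$, graded local duality combined with an Artinian reduction of the canonical module $\Ext^{n-d}_S(N,S(-n))$ produces the same kind of decomposition of $[H^\bullet_\m(N)]$ into $[H^\bullet_\m(S_d(-j))]$'s with coefficients that are Hilbert function values of an Artinian module, hence non-negative integers. This is more elementary and entirely self-contained within Section \ref{Section Decomposition}; what the gin approach buys in exchange is uniformity across the regimes $n-2\leq\Edepth(M)<n$ (Theorem \ref{theorem decomposition LC Edepth}), to which the filtration argument does not extend. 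One minor imprecision in your write-up: the worry that $\Delta$ must ``carry a sign such as $(-1)^i$'' is unfounded --- with the paper's convention $\Delta^t(N)_j=\Delta^{t-1}(N)_j-\Delta^{t-1}(N)_{j+1}$, one has $\DHS(H^\bullet_\m(S_i(-i)))=(1-z^{-1})^i\cdot u^i/(1-z^{-1})^i=u^i$, so the infinite tail collapses to a single $+1$ with no alternating sign --- but the instinct to check the direction of the finite difference is exactly the right thing to worry about.
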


Finally, in Section \ref{Section soclelemma} we extend a ``socle-lemma'' due to Kustin and Ulrich to the non-Artinian case. The original version states that, if $I \subseteq J$ are two $\m$-primary homogeneous ideals, and $\HF(\soc(S/I)) \leq \HF(\soc(S/J))$, then $I=J$.
To extend this result to arbitrary dimension, we need to assume that our modules have sufficiently large $\Edepth$, and the Hilbert functions of the socles of certain local cohomology modules satisfy an analogous inequality. For simplicity, here we only state our result in the sequentially Cohen-Macaulay case:

\begin{theoremx}[see Theorem \ref{soclelemma} and Corollary \ref{soclelemma sequentially}] \label{THMX C} Let $S=\kk[x_1,\ldots,x_n]$, and $F$ be a graded free $S$-module. Let $A \subseteq B$ be graded submodules of $F$ such that $F/A$ and $F/B$ are sequentially Cohen-Macaulay. If $\HF(\soc(H^i_\m(F/A))) \leq \HF(\soc(H^i_\m(F/B)))$ for all $i \in \ZZ$, then $A=B$.
\end{theoremx}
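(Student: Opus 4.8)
The strategy is to reduce the statement to the Artinian case handled by Kustin–Ulrich. For a sequentially Cohen-Macaulay module, each $\Ext^i_S(M,S)$ is Cohen-Macaulay of dimension $n-i$, and by local duality $H^i_\m(M)^\vee \cong \Ext^{n-i}_S(M,S)$. So I would first pass to the top-dimensional local cohomology and argue by descending induction on $i$, or more precisely by induction on $\dim(F/A)$. The base case $\dim(F/A)=0$ is exactly the Kustin–Ulrich result (applied componentwise to the free module $F$, or after choosing a basis). For the inductive step, I would choose a linear form $\ell \in S$ that is sufficiently general, so that it is a filter-regular element for both $F/A$ and $F/B$ simultaneously (and also behaves well with respect to the finitely many associated primes of the relevant $\Ext$ modules). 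The point of the genericity is twofold: first, $\Edepth$ is well-behaved modulo a general linear form by Proposition \ref{proposition Edepth modulo filter regular element}, so $F/(A+\ell F)$ and $F/(B+\ell F)$ remain sequentially Cohen-Macaulay (or at least retain enough $\Edepth$); second, one gets exact sequences relating $H^i_\m(F/A)$ to $H^i_\m(F/(A,\ell))$ and $H^{i-1}_\m(F/(A,\ell))$.

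Concretely, for a filter-regular $\ell$ one has, for each $i$, a four-term exact sequence coming from $0 \to (F/A)(-1) \xrightarrow{\ell} F/A \to F/(A+\ell F) \to 0$ (up to finite-length kernel), which on local cohomology gives
\[
0 \to H^0_\m(F/(A+\ell F)) \to H^1_\m(F/A)(-1) \xrightarrow{\ell} H^1_\m(F/A) \to \cdots.
\]
Because $F/A$ is sequentially Cohen-Macaulay and $\ell$ is general, the connecting maps degenerate in a controlled way, so that $\soc(H^i_\m(F/A))$ is determined by $\soc(H^{i-1}_\m(F/(A+\ell F)))$ (this is the non-Artinian analogue of the socle computation; it is precisely the kind of statement that the ``positive $\Edepth$'' discussion in the introduction is alluding to — reading $\HF(H^i_\m(M))$ from $\HF(H^{i-1}_\m(M/\ell M))$). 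The hypothesis $\HF(\soc(H^i_\m(F/A))) \leq \HF(\soc(H^i_\m(F/B)))$ for all $i$ then translates into $\HF(\soc(H^{i-1}_\m(F/(A+\ell F)))) \leq \HF(\soc(H^{i-1}_\m(F/(B+\ell F))))$ for all $i$, i.e. the same hypothesis one dimension down. By induction $A + \ell F = B + \ell F$.

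It remains to deduce $A=B$ from $A \subseteq B$ and $A+\ell F = B+\ell F$ for a general linear form $\ell$. This is the step I expect to be the main obstacle, since $A+\ell F = B + \ell F$ only says $B \subseteq A + \ell F$, i.e. $B/A \subseteq \ell(F/A)$. One would like to conclude $B/A = \ell(B/A)$ and then invoke graded Nakayama to get $B/A=0$; but $B/A \subseteq \ell(F/A)$ does not by itself give $B/A \subseteq \ell(B/A)$. The resolution is to use the genericity of $\ell$ more carefully together with the sequentially Cohen-Macaulay hypothesis on $F/A$: the module $B/A$ is a submodule of $F/A$, its associated primes are among those of $F/A$, and for $\ell$ avoiding all of them one has that multiplication by $\ell$ on $B/A$ is injective; combined with $B/A \subseteq \ell (F/A)$ and a dimension/length count in each degree — using that the Hilbert functions of $H^i_\m(F/A)$ and $H^i_\m(F/B)$ agree once all their socles agree (which one bootstraps from $A+\ell F = B+\ell F$ and the inductive control of local cohomology under reduction by $\ell$) — one forces $B/A = 0$. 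Equivalently, one can run the whole argument at the level of the matrices $[H^\bullet_\m(-)]$: show $[H^\bullet_\m(F/A)] = [H^\bullet_\m(F/B)]$ first (the inequalities on socles of all local cohomologies, propagated down by the exact sequences, pin down the tables), and then $A \subseteq B$ with equal Hilbert functions of all quotients forces $A=B$.
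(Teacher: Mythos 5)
Your overall strategy --- reduce to the Artinian Kustin--Ulrich lemma by cutting with a general linear form, propagating socle inequalities through the short exact sequences on local cohomology --- is the same as the paper's (Lemma~\ref{lemmainequalitysocles Artinian}, Theorem~\ref{soclelemma}, Corollary~\ref{soclelemma sequentially}). But you correctly flagged the final step as the main obstacle, and your proposed resolutions do not work. You say one cannot pass from $B/A \subseteq \ell(F/A)$ to $B/A = \ell(B/A)$, and that is exactly right; neither the vague ``dimension/length count'' nor the alternative of first establishing $[H^\bullet_\m(F/A)] = [H^\bullet_\m(F/B)]$ (which does not follow from the socle inequalities alone) closes the gap.

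The paper's resolution is to run the induction on the \emph{saturations}. The short exact sequence $0 \to (F/A^{\sat})(-1) \xrightarrow{\ \ell\ } F/A^{\sat} \to F/(A^{\sat}+\ell F) \to 0$ (note: $A^{\sat}$, not $A$) and the $\Edepth$ hypothesis give $\soc\bigl(H^{i}_\m(F/(A^{\sat}+\ell F))\bigr) \cong \soc\bigl(H^{i+1}_\m(F/A)\bigr)(-1)$ for all $i\geq 0$, and similarly for $B$. The inductive hypothesis then produces $A^{\sat}+\ell F = B^{\sat}+\ell F$ --- an equality of \emph{saturated} modules plus $\ell F$. Now the Nakayama argument \emph{does} go through: given $b \in B^{\sat}$, write $b = a + \ell f$ with $a \in A^{\sat}$ and $f \in F$; then $\ell f = b - a \in B^{\sat}$, and since $\ell$ is a non-zero divisor on $F/B^{\sat}$ (automatic for the saturated module, by filter regularity), one gets $f \in B^{\sat}$, hence $B^{\sat} = A^{\sat} + \ell B^{\sat}$, hence $B^{\sat}/A^{\sat} = \ell\,(B^{\sat}/A^{\sat})$, hence $A^{\sat} = B^{\sat}$ by graded Nakayama. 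This is precisely what fails if you work with $A$, $B$ directly rather than $A^{\sat}$, $B^{\sat}$. Finally, after establishing $A^{\sat} = B^{\sat}$, you still need one more application of the Artinian lemma: set $C = A^{\sat} = B^{\sat}$, note $H^0_\m(F/A) = C/A$ and $H^0_\m(F/B) = C/B$, and use the $i=0$ socle inequality together with Lemma~\ref{lemmainequalitysocles Artinian} to conclude $A = B$. Your sketch omits this final step, and in fact the $H^0$ socle inequality is never used in your proposal, which is a tell-tale sign that something is missing.

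Two smaller remarks. First, the paper actually proves a more flexible statement (Theorem~\ref{soclelemma}): one inducts on the length $t$ of a filter regular sequence, assumes $\Edepth \geq t-1$, assumes socle inequalities only up to $i=t$, and additionally assumes equality of the Hilbert functions of the saturations of $A$ and $B$ after adding the whole sequence; the sequentially Cohen--Macaulay case is recovered by taking $t = \dim(F/A)$, where this last hypothesis becomes vacuous since the saturation is all of $F$. Second, in the base case $t=1$ the $\Edepth$ assumption is not needed; it is only invoked for $t \geq 2$ to break the long exact sequence into short exact sequences, so the statement is genuinely about $\Edepth(F/A), \Edepth(F/B) \geq t-1$, not full sequential Cohen--Macaulayness, and this is why the paper states it at that level of generality.
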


\subsection*{Acknowledgments} We thank the anonymous referees for pointing out some inaccuracies contained in a previous version of this article, and for several very useful comments.

\section{E-depth: definitions and basic properties} \label{Section Edepth}
Let $S=\kk[x_1,\ldots,x_n]$, where $\kk$ is a field and each variable is  given degree equal to one. We will also assume that $\kk$ is infinite, since reducing to this case via a faithfully flat extension does not affect our considerations. Given a $\ZZ$-graded $S$-module $M= \bigoplus_{i \in \ZZ} M_i$, and $j \in \ZZ$, we denote by $M(j)$ its shift by $j$, that is, the $\ZZ$-graded $S$-module whose $i$-th graded component is $M_{i+j}$.

Throughout, $\m$ will always denote the maximal homogeneous ideal of $S$, and $M$ will denote a finitely generated $\ZZ$-graded $S$-module. For convenience, we let $\depth(0) = + \infty$. Given a finitely generated $\ZZ$-graded $S$-module $M$, we denote by $H^i_\m(M)$ the $i$-th graded local cohomology module of $M$, with support in $\m$. By definition, this is the $i$-th cohomology of $\check{\rm C}^\bullet \otimes_S M$, where $\check{\rm C}^\bullet$ is the $\check{{\rm C}}$ech complex on $x_1,\ldots,x_n$. 

We start by recalling the notion of sequentially Cohen-Macaulay module.

\begin{definition} \label{Defn SCM} An $S$-module $M$ is said to be {\it sequentially Cohen-Macaulay} if there exists a filtration
\[
0 = M_0 \subseteq M_1 \subseteq \ldots \subseteq M_r = M
\]
such that each quotient $M_{i+1}/M_i$ is Cohen-Macaulay with $\dim(M_{i+1}/M_i)>\dim(M_i/M_{i-1})$ for all $i=1,\ldots,r-1$.
\end{definition}

Sequentially Cohen-Macaulay modules were introduced by Stanley \cite{Stanley}. An equivalent formulation, due to Peskine, is the following: $M$ is sequentially Cohen-Macaulay if and only if, for every $i \in \ZZ$, the module $\Ext^{i}_S(M,S)$ is either zero, or Cohen-Macaulay of dimension $n-i$. 

\begin{example} 
Cohen-Macaulay modules are sequentially Cohen-Macaulay. One dimensional modules are also sequentially Cohen-Macaulay, since either $M$ is Cohen-Macaulay, or the filtration $0 \subseteq H^0_\m(M) \subseteq M$ has Cohen-Macaulay subquotients, and $\dim(M/H^0_\m(M)) = 1> 0 = \dim(H^0_\m(M))$.
\end{example}

We observe that, if $M$ is a sequentially Cohen-Macaulay $S$-module and $\ell$ is a linear non-zero divisor on $M$, as well as on $\Ext^i_S(M,S)$ and  $\Ext^{i+1}_S(M,S)$, then 
\[
\Ext^{i+1}_S(M/\ell M,S) \cong \Ext^i_S(M(-1),S)/\ell \Ext^i_S(M,S).
\] 
In fact, we will see that $\ell$ only needs to be a non-zero divisor on $M/H^0_\m(M)$ and on the two $\Ext$ modules for this to be true, not necessarily on $M$. This simple observation often allows to reduce the dimension of a sequentially Cohen-Macaulay module, yet controlling features such as depth and regularity. In this sense, the case when $M$ is sequentially Cohen-Macaulay is the best possible, since all $\Ext$-modules have maximal depth. We introduce the notion of $\Edepth$ of a module $M$ to measure the number of times that the above procedure can be re-iterated, without altering the cohomological features of $M$.

\begin{definition} \label{Definition Edepth}
Let $S=\kk[x_1,\ldots,x_n]$, and $M$ be a finitely generated graded $S$-module. For an integer $t \in \ZZ_{\geq 0}$, we say that $M$ satisfies condition $(E_t)$ if $\depth(\Ext^i_S(M,S)) \geq \min \{t,n-i\}$ for all $i$. We define
\[
\ds \Edepth(M) = \min\bigg\{n, \ \sup\{t \in \ZZ_{\geq 0} \mid M \text{ satisfies } (E_t)\}\bigg\}.
\]
\end{definition}

\begin{remark} The definition of $\Edepth$ is here given in the standard graded setting, because it suits the level of  generality that we consider in this article. The same definitions, and completely analogous considerations, can be made for finitely generated modules over local rings.
\end{remark}

We now study some basic properties of the $\Edepth$ of a module. We start by noticing that there is no general relation between $\Edepth(M)$ and $\depth(M)$, even when $M$ is a $\kk$-algebra.
\begin{example} \label{ex finite length LC 2dim}
Let $S=\kk[x,y,z,w]$, and let $R=S/\p$, where $\p$ is the kernel of the map $\varphi:S \to \kk[s,t]$ defined as follows:
\[
\ds \varphi(x) = s^4, \ \varphi(y) = s^3t, \ \varphi(z) = st^3, \ \varphi(w) = t^4.
\]
The only two non-zero $\Ext$ modules are $\Ext^2_S(R,S)$ and $\Ext^3_S(R,S)$. It can be checked that $\Ext^3_S(R,S)$ has finite length, and therefore $\Edepth(R) = 0$ is forced. On the other hand, $\depth(R) = 1$.
\end{example}
\begin{example}
Let $S=\kk[x,y]$ and $R=S/I$, with $I=(x^2,xy)$. It is clear that, $\depth(R)=0$. On the other hand, the only two non-zero $\Ext$ modules $\Ext^1_S(R,S)$ and $\Ext^2_S(R,S)$ are both Cohen-Macaulay of dimension one and zero, respectively. So $R$ is sequentially Cohen-Macaulay, and thus $\Edepth(R)=2$.
\end{example}

We recall the definition of filter and strictly filter regular sequence.

\begin{definition}
Let $S=\kk[x_1,\ldots,x_n]$, and $M$ be a $\ZZ$-graded $S$-module. A homogeneous element $\ell \in \m$ is called a {\it filter regular element for $M$} if $\ell \notin \bigcup_{\p \in \Ass^\circ(M)} \p$, where $\Ass^\circ(M) = \Ass(M) \smallsetminus \m$. A sequence $\ell_1,\ldots,\ell_t$ is called a {\it filter regular sequence for $M$} if $\ell_i$ is filter regular for $M/(\ell_1,\ldots,\ell_{i-1})M$ for all $i$.
\end{definition}

Equivalently, $\ell$ is filter regular for $M$ if $0:_M \ell$ has finite length, and in this case one has
\[
H^0_\m(M)  = 0:_M \m^{\infty} \subseteq 0:_M \ell^\infty \subseteq H^0_\m(M),
\]
hence forcing equality everywhere. A related notion is that of strictly filter regular element.

\begin{definition}
Let $S=\kk[x_1,\ldots,x_n]$, and $M$ be a $\ZZ$-graded $S$-module. For all $i \in \ZZ$, let $X^i=\Ass^\circ(\Ext^i_S(M,S))$. A homogeneous element $\ell \in \m$ is called a {\it strictly filter regular element for $M$} if $\ell \notin \bigcup_{i \in \ZZ} \bigcup_{\p \in X^i} \p$. A sequence $\ell_1,\ldots,\ell_t$ is called a {\it strictly filter regular sequence for $M$} if $\ell_i$ is strictly filter regular for $M/(\ell_1,\ldots,\ell_{i-1})M$ for all $i$.
\end{definition}

We will simply say that $\ell_1,\ldots,\ell_t$ is a filter (resp. strictly filter) regular sequence whenever the module $M$ is clear from the context. It follows from \cite[11.3.9]{BrodmannSharp} that $\Ass^\circ(M) \subseteq \bigcup_i \bigcup_{\p \in X^i} \p$, therefore a strictly filter regular sequence is automatically a filter regular sequence.

\begin{lemma}\label{lemma positive Edepth ses} Let $S=\kk[x_1,\ldots,x_n]$, and $M$ be a finitely generated $\ZZ$-graded $S$-module. Let $N=M/H^0_\m(M)$, and $\ell$ be a strictly filter regular element for $M$ of degree $\delta>0$. We have that $\Edepth(M)>0$ if and only if the graded sequences
\[
\xymatrix{
0 \ar[r] & \Ext^{n-i}_S(M,S) \ar[r]^-{\cdot \ell} & \Ext^{n-i}_S(M(-\delta),S) \ar[r] & \Ext^{n-i+1}_S(N/\ell N,S) \ar[r] & 0
}
\]
\[
\xymatrix{
0 \ar[r] & H^{i-1}_\m(N/\ell N) \ar[r] & H^i_\m(M)(-\delta)\ar[r]^-{\cdot \ell} & H^i_\m(M) \ar[r] & 0
}
\]
induced by $0 \to N(-\delta) \stackrel{\cdot \ell}{\longrightarrow} N \to N/\ell N \to 0$ are exact for all $i>0$.
\end{lemma}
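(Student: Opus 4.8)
The plan is to translate exactness of each of the two displayed families into a single condition about multiplication by $\ell$ on, respectively, the modules $\Ext^k_S(M,S)$ and $H^j_\m(M)$, to identify these two conditions with one another via graded local duality, and to recognize them as condition $(E_1)$, i.e.\ $\Edepth(M)>0$.

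First, some preliminary reductions. Put $T=H^0_\m(M)$, a finite-length module; since $\operatorname{grade}(\m,S)=n$ we have $\Ext^j_S(T,S)=0$ for $j<n$, and $H^j_\m(T)=0$ for $j>0$. Feeding these into the long exact sequences of $0\to T\to M\to N\to 0$ yields isomorphisms $\Ext^j_S(M,S)\cong\Ext^j_S(N,S)$ for $j\le n-1$ and $H^j_\m(M)\cong H^j_\m(N)$ for $j\ge 1$, compatible with multiplication by $\ell$; these are the identifications implicit in the statement. Moreover $H^0_\m(N)=0$, so the filter regular element $\ell$ is a nonzerodivisor on $N$, the sequence $0\to N(-\delta)\to N\to N/\ell N\to 0$ is exact, and graded local duality gives $\Ext^n_S(N,S)\cong H^0_\m(N)^\vee(n)=0$.

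Next I would apply $\Hom_S(-,S)$ to $0\to N(-\delta)\xrightarrow{\cdot\ell} N\to N/\ell N\to 0$ and read off the long exact $\Ext$ sequence. The three-term piece
\[
0\to\Ext^{n-i}_S(N,S)\xrightarrow{\cdot\ell}\Ext^{n-i}_S(N(-\delta),S)\to\Ext^{n-i+1}_S(N/\ell N,S)\to 0
\]
is exact exactly when $\cdot\ell$ is injective on both $\Ext^{n-i}_S(N,S)$ and $\Ext^{n-i+1}_S(N,S)$; letting $i$ run over the positive integers, the first family is exact for all $i>0$ if and only if $\cdot\ell$ is injective on $\Ext^k_S(N,S)$ for every $0\le k\le n$, which by $\Ext^n_S(N,S)=0$ and the identifications above is the same as: $\cdot\ell$ is injective on $\Ext^k_S(M,S)$ for all $k<n$. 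Entirely analogously, applying $H^\bullet_\m(-)$ to the same short exact sequence shows that the second family is exact for all $i>0$ if and only if $\cdot\ell$ is surjective on $H^j_\m(M)$ for all $j\ge 1$. Since graded local duality gives $H^j_\m(M)^\vee\cong\Ext^{n-j}_S(M,S)(-n)$ and the Matlis dual of multiplication by $\ell$ is again multiplication by $\ell$, surjectivity of $\cdot\ell$ on $H^j_\m(M)$ for all $j\ge 1$ is equivalent to injectivity of $\cdot\ell$ on $\Ext^k_S(M,S)$ for all $k\le n-1$. Thus both families are exact for all $i>0$ if and only if $\cdot\ell$ is injective on $\Ext^k_S(M,S)$ for every $k<n$. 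Finally, because $\ell$ is strictly filter regular for $M$ it lies outside every prime of $\Ass^\circ(\Ext^k_S(M,S))$, so injectivity of $\cdot\ell$ on $\Ext^k_S(M,S)$ is equivalent to $\m\notin\Ass(\Ext^k_S(M,S))$, i.e.\ to $\depth\Ext^k_S(M,S)\ge 1$; requiring this for all $k<n$ is exactly condition $(E_1)$, hence $\Edepth(M)>0$.

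The computations inside the two long exact sequences are routine diagram chases. The step to watch is the boundary index $i=1$, where $\Ext^n_S(N/\ell N,S)$ and $H^0_\m$ enter and one must use that $N$ has no $\m$-torsion (so $\Ext^n_S(N,S)=0$ and $H^0_\m(N)=0$) to see that the three-term sequences stay exact at their outer terms; keeping the degree shifts consistent in the local duality identification is the only other point requiring care, and everything else is formal.
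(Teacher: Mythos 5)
Your proof is correct and follows essentially the same route as the paper's: pass to $N$ via the vanishing of $\Ext^{<n}_S(H^0_\m(M),S)$ and $H^{>0}_\m(H^0_\m(M))$, use the long exact sequence of $0\to N(-\delta)\to N\to N/\ell N\to 0$ to translate exactness of the three-term pieces into a non-zerodivisor condition on the $\Ext$ modules, and identify that condition with $(E_1)$ using strict filter regularity. The only cosmetic difference is that you handle the local cohomology family directly from its own long exact sequence and match the two conditions by Matlis duality, whereas the paper proves the $\Ext$ short exact sequences first and then cites graded local duality to deduce the local cohomology ones; the underlying ingredients and the role of $\Ext^n_S(N,S)=0$ (at the boundary index $i=1$) are identical.
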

\begin{proof}
By previous observations, we have that $\ell$ is also filter regular for $M$, hence it is regular for $N$. Assume that $\Edepth(M)>0$, and consider the graded short exact sequence $0 \to N(-\delta)\stackrel{\cdot \ell}{\longrightarrow} N \to N/\ell N \to 0$. This gives a long exact sequence
\[
\xymatrix{
\cdots \ar[r] & \Ext^{n-i}_S(N,S) \ar[r]^-{\cdot \ell} & \Ext^{n-i}_S(N(-\delta),S) \ar[r] & \Ext^{n-i+1}_S(N/\ell N,S) \ar[r] & \cdots
}
\]
Observe thatb $\Ext^{n-i}_S(N,S) = 0$ for $i \leq 0$. Moreover, it follows from the short exact sequence $0 \to H^0_\m(M) \to M \to N \to 0$ that $\Ext^n_S(M,S) = \Ext^n_S(H^0_\m(M),S)$, while $\Ext^{n-i}_S(M,S) \cong \Ext^{n-i}_S(N,S)$ for all $i>0$. As $\Edepth(M)>0$, and $\ell$ is strictly filter regular, we have that $\ell$ is regular on $\Ext^{n-i}_S(M,S) \cong \Ext^{n-i}_S(N,S)$ for all $i>0$. In particular, multiplication by $\ell$ on $\Ext^i_S(M,S)$ in the long exact sequence above is injective for all $i>0$, and the long exact sequence breaks into short exact sequences. The statement for local cohomology modules follows at once from graded local duality \cite[13.4.3]{BrodmannSharp}.

Conversely, assume that the sequences above are exact. From the $\Ext$-sequence we deduce that either $\Ext^{n-i}_S(M,S)=0$, or $\ell$ is a non-zero divisor for it. In particular, we have that $\depth(\Ext^{n-i}_S(M,S))>0$ for all $i>0$. Since $\Ext^n_S(M,S)$ has finite length, and $\Ext^{n-i}_S(M,S)=0$ for $i<0$, it follows that $\Edepth(M)>0$.
\end{proof}

Our next goal is to provide a more explicit relation between $\Edepth$ and sequentially Cohen-Macaulay modules. We first need a lemma.

\begin{lemma} \label{Lemma d-1->d} Let $S=\kk[x_1,\ldots,x_n]$, with the standard grading, and $M$ be a finitely generated $\ZZ$-graded $S$-module of dimension $d$ such that $\depth(\Ext^{n-d}_S(M,S)) \geq d-1$. If $d >1$, further assume that $\depth(\Ext^{n-(d-1)}_S(M,S))>0$. Then $\depth(\Ext^{n-d}_S(M,S))=d$.
\end{lemma}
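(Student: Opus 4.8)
The claim is a statement about the depth of a single $\Ext$ module, namely the top non-vanishing one $E:=\Ext^{n-d}_S(M,S)$, which has dimension $d$ (since $\dim M = d$). We already know $\depth(E) \geq d-1$; we want to upgrade this to $\depth(E) = d$, i.e.\ that $E$ is Cohen-Macaulay. The natural tool is a linear system of parameters, reducing modulo a sufficiently general linear form and inducting on $d$. I would first replace $M$ by $N = M/H^0_\m(M)$, observing that for the relevant top $\Ext$ indices $\Ext^{n-d}_S(M,S) \cong \Ext^{n-d}_S(N,S)$ and $\Ext^{n-(d-1)}_S(M,S) \cong \Ext^{n-(d-1)}_S(N,S)$ (using the short exact sequence $0 \to H^0_\m(M) \to M \to N \to 0$ and the fact that $H^0_\m(M)$ only contributes to $\Ext^n$), so the hypotheses and conclusion are unchanged and we may assume $\depth M > 0$.

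\textbf{Main steps.} Choose a linear form $\ell$ that is simultaneously a non-zero divisor on $N$ (possible since $\depth N > 0$) and strictly filter regular for $M$ — such $\ell$ exists because $\kk$ is infinite and there are only finitely many associated primes to avoid. For the base case $d=1$ there is nothing to prove beyond the hypothesis $\depth(E) \geq 0$ combined with $E \neq 0$ being $1$-dimensional, which is handled directly (a nonzero module of dimension $1$ always has depth at least... well, we need depth $=1$; here one uses that $\Ext^{n-1}$ of a $1$-dimensional module has no finite-length submodule, cf.\ the vanishing $H^0_\m(E) = 0$, which for $d=1$ follows because the only associated prime of $E$ of dimension $<1$ would force a finite-length piece that duality would detect in $H^1_\m(M)$ inappropriately — I would phrase this via local duality). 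For $d > 1$: from $0 \to N(-\delta) \xrightarrow{\ell} N \to N/\ell N \to 0$ we get, using that $\ell$ is regular on $E$ and on $\Ext^{n-(d-1)}_S(N,S)$ (the latter by hypothesis $\depth > 0$), a short exact sequence
\[
0 \to \Ext^{n-d}_S(N,S)/\ell \to \Ext^{n-d}_S(N/\ell N, S) \to \big(0 :_{\Ext^{n-(d-1)}_S(N,S)} \ell\big) \to 0,
\]
and the last term vanishes because $\ell$ is a non-zero divisor there. So $\Ext^{(n-1)-(d-1)}_{S/\ell S}$-type reasoning: setting $\bar{S} = S/\ell S$ and $\bar{N} = N/\ell N$, change of rings gives $\Ext^{n-d}_S(\bar N, S) \cong \Ext^{(n-1)-(d-1)}_{\bar S}(\bar N, \bar S)(1)$ up to a shift, and $\dim \bar N = d-1$. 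I would then check the inductive hypotheses hold for $\bar N$ over $\bar S$: that $\depth \Ext^{(n-1)-(d-1)}_{\bar S}(\bar N,\bar S) \geq d-2$ — which follows from $\depth(E/\ell E) = \depth(E) - 1 \geq d-2$ and the isomorphism above — and, when $d - 1 > 1$, that $\depth \Ext^{(n-1)-(d-2)}_{\bar S}(\bar N, \bar S) > 0$, which one gets similarly from an analogous short exact sequence at index $n-(d-1)$ together with the hypothesis $\depth \Ext^{n-(d-1)}_S(N,S) > 0$ (regularity of $\ell$ there makes $\depth$ drop by exactly one, staying $\geq 0$; one needs strictly positive, so here I would instead argue that $\Ext^{n-(d-1)}_{\bar S}$-depth being $\geq 0$ combined with its dimension being $d-2$, plus Lemma~\ref{lemma positive Edepth ses}-style exactness, suffices — the precise bookkeeping is the delicate point). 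By induction $\Ext^{(n-1)-(d-1)}_{\bar S}(\bar N, \bar S)$ is Cohen-Macaulay of dimension $d-1$, hence has depth $d-1$, so $\depth(E/\ell E) = d-1$, and since $\ell$ is regular on $E$ we conclude $\depth(E) = d$.

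\textbf{Main obstacle.} The delicate part is verifying the second inductive hypothesis — that after cutting by $\ell$ the module $\Ext^{n-(d-1)}_S(M,S)$ still has \emph{positive} depth as an $\bar S$-module, rather than merely nonnegative depth. Reducing modulo a non-zero divisor drops depth by exactly one, which is not enough on its own when $d-1 = 2$, so one has to feed in more: either strengthen the choice of $\ell$ to be strictly filter regular for $M$ (so it avoids the associated primes of \emph{all} the $\Ext$ modules, in particular those governing the relevant short exact sequence), or invoke that the next $\Ext$ down vanishes or has the expected large dimension. I expect the cleanest route is to phrase everything in terms of strictly filter regular sequences and the exactness statements already established in Lemma~\ref{lemma positive Edepth ses}, so that the depth bookkeeping becomes a direct consequence of those short exact sequences rather than a separate argument; getting the index shifts and the $d=1$ versus $d=2$ boundary cases exactly right is where care is needed, but it is routine once the framework is set up.
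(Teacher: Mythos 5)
Your strategy --- pass to $N = M/H^0_\m(M)$, pick a strictly filter regular linear form $\ell$, and induct on $d$ via the long exact sequence in $\Ext$ --- matches the paper's. The paper keeps all $\Ext$ groups over $S$ rather than changing rings to $\bar{S} = S/\ell S$, which removes the index-shift bookkeeping you mention but is otherwise the same idea.

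Two concrete issues. First, the base case $d = 1$: your plan is to deduce $H^0_\m(\Ext^{n-1}_S(M,S)) = 0$ ``via local duality'' by claiming that a finite-length submodule ``would be detected in $H^1_\m(M)$ inappropriately.'' That is not an argument: $H^1_\m(M)$ is just the graded Matlis dual of $\Ext^{n-1}_S(M,S(-n))$, and nothing a priori forbids an Artinian module from having a finite-length quotient. The paper's argument is elementary and decisive: since $\dim(N/\ell N) = 0$, the grade of $N/\ell N$ over $S$ is $n$, so $\Ext^{n-1}_S(N/\ell N, S) = 0$, and the long exact sequence coming from $0 \to N(-\delta) \to N \to N/\ell N \to 0$ then shows multiplication by $\ell$ is injective on $\Ext^{n-1}_S(N,S) \cong \Ext^{n-1}_S(M,S)$, giving depth exactly $1$ --- no duality needed. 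Second, the inductive step: you correctly flag that applying the inductive hypothesis to $N/\ell N$ (dimension $d-1$) requires, when $d - 1 > 1$, knowing $\depth(\Ext^{n-(d-2)}_S(N/\ell N, S)) > 0$, which the stated hypotheses do not supply on their own. Declaring the remaining bookkeeping ``routine'' does not discharge it; you must actually produce the positive-depth condition at each level of descent. For what it is worth, the paper's written proof also invokes the inductive hypothesis without explicitly verifying this second condition, implicitly relying on the fact that in its sole application (Proposition \ref{Proposition SCM and Edepth}) the module satisfies $(E_t)$ with $t \geq d-1$, which forces all higher $\Ext$ modules to be Cohen--Macaulay and makes the condition automatic. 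So your worry is well-placed, but your proposal as written leaves it open.
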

\begin{proof} We proceed by induction on $d \geq 0$. If $d=0$ there is nothing to show. If $d=1$, then let $\ell$ be a strictly filter regular element for $M$ of degree $\delta>0$, and $N=M/H^0_\m(M)$. From the short exact sequence $0 \to N(-\delta) \stackrel{\cdot 
\ell}{\longrightarrow} N \to N/\ell N \to 0$ we obtain an exact sequence
\[
\xymatrixcolsep{6mm}
\xymatrix{
\Ext^{n-1}_S(N/\ell N,S) \ar[r] & \Ext^{n-1}_S(N,S) \ar[r]^-{\cdot \ell} & \Ext^{n-1}_S(N(-\delta),S) \ar[r] &  \Ext^{n}_S(N/\ell N,S).
}
\]
However, since $\dim(N/\ell N)=0$, we have $\Ext^{n-1}_S(N/\ell N,S)=0$, that is, $\ell$ is a non-zero divisor on $\Ext^{n-1}_S(N,S)$. Since $\Ext^{n-1}_S(M,S) \cong \Ext^{n-1}_S(N,S)$, we have that $\depth(\Ext^{n-1}_S(M,S)) = 1$. If $d>1$, then let $\ell$ be a strictly filter regular element for $M$, and consider the same short exact sequence as above, which gives an exact sequence
\[
\xymatrixcolsep{5mm}
\xymatrix{
0 \ar[r] & \Ext^{n-d}_S(N,S) \ar[r]^-{\cdot \ell} & \Ext^{n-d}_S(N(-\delta),S) \ar[r] & \Ext^{n-(d-1)}_S(N/\ell N,S) \ar[r] & \ann_{\Ext^{n-(d-1)}_S(N,S)}(\ell),
}
\]
where the zero on the left follows again from the fact that $\Ext^{n-d}_S(N/\ell N,S)=0$, since $\dim(N/\ell N) <d$. As above, we also have $\Ext^{n-d}_S(M,S) \cong \Ext^{n-d}_S(N,S)$ and $\Ext^{n-(d-1)}_S(M,S) \cong \Ext^{n-(d-1)}_S(N,S)$. Since $\depth(\Ext^{n-(d-1)}_S(M,S)) = \depth(\Ext^{n-(d-1)}_S(N,S))>0$, and because $\ell$ is a strictly filter regular element, we have that $\ann_{\Ext^{n-(d-1)}_S(N,S)}(\ell)=0$. In particular, we obtain that
\[
\depth(\Ext^{n-(d-1)}_S(N/\ell N,S)) = \depth(\Ext^{n-d}_S(M,S)) - 1 \geq d-2.
\]
Applying the inductive hypothesis to the module $N/\ell N$, which has dimension $d-1$, gives that $\depth(\Ext^{n-(d-1)}_S(N/ \ell N,S)) = d-1$, and thus $\depth(\Ext^{n-d}_S(M,S))=d$, as claimed.
\end{proof}

\begin{proposition} \label{Proposition SCM and Edepth}
Let $S=\kk[x_1,\ldots,x_n]$, with the standard grading, and $M$ be a finitely generated $\ZZ$-graded $S$-module. The following are equivalent:
\begin{enumerate}[(a)]
\item $M$ is sequentially Cohen-Macaulay.
\item $\Edepth(M)=n$.
\item $M$ satisfies condition $(E_t)$ for some $t \geq \dim(M)-1$.
\end{enumerate}
\end{proposition}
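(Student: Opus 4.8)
The plan is to obtain $(a)\Leftrightarrow(b)$ directly from Peskine's reformulation of the sequentially Cohen--Macaulay property, to note that $(b)\Rightarrow(c)$ is formal, and to prove the one substantial implication $(c)\Rightarrow(a)$ by induction on $d=\dim M$, using Lemma~\ref{Lemma d-1->d} to control the lowest nonvanishing $\Ext$ module and Lemma~\ref{lemma positive Edepth ses} to cut down the dimension.

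For $(a)\Leftrightarrow(b)$ and $(b)\Rightarrow(c)$ I would invoke the standard homological bounds $\dim\Ext^i_S(M,S)\le n-i$ for all $i$ and $\dim\Ext^{n-\dim M}_S(M,S)=\dim M$. Since $(E_t)$ implies $(E_{t'})$ for $t'\le t$, the equality $\Edepth(M)=n$ is the same as $M$ satisfying $(E_n)$, that is, $\depth\Ext^i_S(M,S)\ge n-i$ for all $i\ge 0$; in view of $\depth\le\dim\le n-i$ this says precisely that each $\Ext^i_S(M,S)$ is zero or Cohen--Macaulay of dimension $n-i$, which is Peskine's criterion. As $n\ge\dim M-1$, the implication $(b)\Rightarrow(c)$ is then immediate.

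For $(c)\Rightarrow(a)$, assume $M$ satisfies $(E_t)$ with $t\ge d-1$ and induct on $d$; the case $d\le 0$ (finite length $M$) is trivial. Let $d\ge 1$. The hypotheses of Lemma~\ref{Lemma d-1->d} hold, since $\depth\Ext^{n-d}_S(M,S)\ge\min\{t,d\}\ge d-1$ and, when $d>1$, $\depth\Ext^{n-d+1}_S(M,S)\ge\min\{t,d+1\}\ge 1$; hence $\Ext^{n-d}_S(M,S)$ is Cohen--Macaulay of dimension $d$, in particular of positive depth, and together with the bounds coming from $(E_t)$ this forces $M$ to satisfy $(E_1)$, so $\Edepth(M)>0$. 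Now choose a general linear form $\ell$ strictly filter regular for $M$; then $\ell$ is a nonzerodivisor on $N:=M/H^0_\m(M)$, and, because $\Ext^i_S(M,S)\cong\Ext^i_S(N,S)$ is zero or of positive depth for all $i\le n-1$, it is also a nonzerodivisor on each such $\Ext^i_S(M,S)$. By Lemma~\ref{lemma positive Edepth ses}, the $\Ext$-sequences attached to the multiplication-by-$\ell$ sequence $0\to N(-1)\to N\to N/\ell N\to 0$ split, yielding
\[
\Ext^{k}_S(N/\ell N,S)\ \cong\ \big(\Ext^{k-1}_S(M,S)/\ell\,\Ext^{k-1}_S(M,S)\big)(1)\qquad(k\le n).
\]
Since $\ell$ is a nonzerodivisor on $\Ext^{k-1}_S(M,S)$, this gives $\depth\Ext^{k}_S(N/\ell N,S)=\depth\Ext^{k-1}_S(M,S)-1\ge\min\{t,n-k+1\}-1=\min\{t-1,n-k\}$. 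Consequently: if $d=1$ then $N/\ell N$ has dimension $0$ and is sequentially Cohen--Macaulay; if $d\ge 2$ then $N/\ell N$ has dimension $d-1$, satisfies $(E_{t-1})$ with $t-1\ge(d-1)-1$, and is sequentially Cohen--Macaulay by the inductive hypothesis. In either case every $\Ext^k_S(N/\ell N,S)$ is zero or Cohen--Macaulay of dimension $n-k$; reading the displayed isomorphisms backwards --- using that dividing by the nonzerodivisor $\ell$ drops depth and dimension by one, and Nakayama to detect vanishing --- we conclude that $\Ext^j_S(M,S)$ is zero or Cohen--Macaulay of dimension $n-j$ for every $j\le n-1$. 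As $\Ext^n_S(M,S)$ has finite length, Peskine's criterion shows $M$ is sequentially Cohen--Macaulay, completing the induction.

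I expect the main difficulty to be the bookkeeping with $\Edepth$: one must verify that $N/\ell N$ inherits condition $(E_{t-1})$ with exactly the right index shift --- this is the point of the identity $\min\{t,n-k+1\}-1=\min\{t-1,n-k\}$ --- and that a single general linear form can be chosen strictly filter regular for $M$ while being a nonzerodivisor on all the relevant $\Ext$ modules, which is possible because $\kk$ is infinite and only finitely many non-maximal associated primes intervene. Checking that Lemma~\ref{lemma positive Edepth ses} is applicable in the inductive step is immediate once $\Ext^{n-d}_S(M,S)$ is known to have positive depth; the remaining steps are routine manipulations of the long exact sequences in $\Ext$.
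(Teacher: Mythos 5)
Your proof is correct but takes a genuinely longer route for the substantial implication $(c)\Rightarrow(a)$. The paper's argument is much more direct: after invoking Lemma~\ref{Lemma d-1->d} to show $\Ext^{n-d}_S(M,S)$ is Cohen--Macaulay of dimension $d$, it handles all the intermediate indices $i>n-d$ in one line by observing that $(E_t)$ with $t\geq d-1\geq n-i$ forces
\[
n-i=\min\{t,n-i\}\leq\depth\Ext^i_S(M,S)\leq\dim\Ext^i_S(M,S)\leq n-i,
\]
so each of these $\Ext$ modules is automatically zero or Cohen--Macaulay of maximal dimension. You actually make exactly this observation yourself when proving $(a)\Leftrightarrow(b)$, but you do not reuse it in $(c)\Rightarrow(a)$; instead, after establishing $\Edepth(M)>0$, you run a full dimension-reduction induction through $N/\ell N$ using Lemma~\ref{lemma positive Edepth ses}, pass sequential Cohen--Macaulayness up from $N/\ell N$ to $M$ by ``reading the $\Ext$ isomorphisms backwards,'' and so on. This works --- the bookkeeping checks out, in particular the depth inequality for $N/\ell N$ and the Nakayama step --- but the entire inductive layer is redundant once one notices that only the bottom $\Ext$ module needs Lemma~\ref{Lemma d-1->d}. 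One minor slip: where you verify the hypotheses of Lemma~\ref{Lemma d-1->d} for $d>1$, the bound should read $\depth\Ext^{n-(d-1)}_S(M,S)\geq\min\{t,d-1\}\geq 1$, not $\min\{t,d+1\}$; since $t\geq d-1$ the conclusion survives, but the index in $\min\{t,n-i\}$ with $i=n-(d-1)$ is $d-1$, not $d+1$.
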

\begin{proof}
The implications (a) $\Rightarrow$ (b) $\Rightarrow$ (c) are clear from the definitions. Let $d=\dim(M)$, and assume that $M$ satisfies $(E_t)$ for some $t \geq d-1$. Let $i$ be such that $\Ext^i_S(M,S) \ne 0$. Since $\Ht(\ann_S(M))=n-d$, we must have $i \geq n-d$. For $i > n-d$ we have that $t \geq d-1 \geq n-i$, and thus
\[
n-i = \min\{t,n-i\} \leq \depth(\Ext^i_S(M,S)) \leq \dim(\Ext^i_S(M,S)) \leq n-i.
\]
In particular, $\Ext^i_S(M,S)$ is Cohen-Macaulay of dimension $n-i$. If $i=n-d$, by assumption we have that $\depth(\Ext^{n-d}_S(M,S)) \geq \min\{t,d\} \geq d-1$. 
Since $\dim(\Ext^{n-d}_S(M,S)) = d$, and because $\depth(\Ext^{n-(d-1)}_S(M,S)) = d-1 >0$ when $d>1$, we conclude by Lemma \ref{Lemma d-1->d} that $\Ext^{n-d}_S(M,S)$ is Cohen-Macaulay of dimension $d$, 
and therefore $M$ is sequentially Cohen-Macaulay.
\end{proof}

\begin{proposition} \label{proposition Edepth modulo filter regular element} Let $S=\kk[x_1,\ldots,x_n]$, with the standard grading, and $M$, $M'$ be two finitely generated graded $S$-modules. We have:
\begin{enumerate}
\item \label{prop direct sum} $\Edepth(M \oplus M') = \min \{\Edepth(M),\Edepth(M') \}$.
\item \label{prop H^0} $\Edepth(M) = \Edepth(M/H^0_\m(M))$.
\item \label{prop modulo filter} Let $N=M/H^0_\m(M)$. If $\Edepth(M)>0$ and $\ell$ is a homogeneous strictly filter regular element, then either $\Edepth(N/\ell N) = \Edepth(M)= n$, or $\Edepth(N/\ell N) = \Edepth(M)-1$.
\end{enumerate}
\end{proposition}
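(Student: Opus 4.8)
For part \eqref{prop direct sum}, the plan is to use the fact that $\Ext^i_S(M \oplus M', S) \cong \Ext^i_S(M,S) \oplus \Ext^i_S(M',S)$ for every $i$, together with the elementary observation that $\depth(A \oplus B) = \min\{\depth(A), \depth(B)\}$ (recalling our convention $\depth(0) = +\infty$, so a vanishing summand does not interfere). Then $M \oplus M'$ satisfies $(E_t)$ if and only if both $M$ and $M'$ do, and taking suprema and truncating at $n$ gives the claim. For part \eqref{prop H^0}, I would use the short exact sequence $0 \to H^0_\m(M) \to M \to N \to 0$. Since $H^0_\m(M)$ has finite length, $\Ext^i_S(H^0_\m(M),S)$ is zero for $i < n$ and has finite length (hence depth $\geq \min\{t, 0\} = 0$ for every $t$) for $i = n$. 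The long exact sequence of $\Ext$ then yields $\Ext^i_S(M,S) \cong \Ext^i_S(N,S)$ for $i < n$, while in degree $n$ both modules have finite length. Checking condition $(E_t)$ degree by degree, the only constraint that could differ is at $i = n$, where $\min\{t, n-i\} = 0$ is automatically satisfied; hence $M$ satisfies $(E_t)$ if and only if $N$ does, and $\Edepth(M) = \Edepth(N)$.

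Part \eqref{prop modulo filter} is the substantive one, and here I would lean directly on Lemma \ref{lemma positive Edepth ses} and its proof. Since $\Edepth(M) > 0$, that lemma gives, for every $i > 0$, short exact sequences
\[
0 \to \Ext^{n-i}_S(M,S) \xrightarrow{\cdot \ell} \Ext^{n-i}_S(M(-\delta),S) \to \Ext^{n-i+1}_S(N/\ell N, S) \to 0,
\]
and we also know $\Ext^{n}_S(M,S)$ has finite length while $\Ext^{j}_S(M,S) = 0$ for $j < n - \dim(M)$; moreover $\Ext^{n-i}_S(N/\ell N, S) = 0$ whenever $n - i < n - \dim(N/\ell N)$, i.e. for $i > \dim(N/\ell N) = \dim(N) - 1$. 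The idea is that these sequences let me transfer depth information between $M$ and $N/\ell N$. Writing $e_j := \depth(\Ext^j_S(M,S))$ and $f_j := \depth(\Ext^j_S(N/\ell N,S))$, the short exact sequence together with the depth lemma applied to the injection $\Ext^{n-i}_S(M,S) \xrightarrow{\cdot \ell} \Ext^{n-i}_S(M(-\delta),S)$ — which is multiplication by a non-zerodivisor, so the cokernel $\Ext^{n-i+1}_S(N/\ell N,S)$ has depth either $e_{n-i} - 1$, or is zero — gives $f_{n-i+1} \in \{e_{n-i} - 1\}$ when $\Ext^{n-i}_S(M,S) \neq 0$, and $f_{n-i+1} = +\infty$ (i.e. the module is $0$) when $\Ext^{n-i}_S(M,S) = 0$. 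Running the index $j = n - i + 1$ over all relevant values, condition $(E_t)$ for $M$, namely $e_j \geq \min\{t, n-j\}$, translates into $f_{j} \geq \min\{t,n-j\} - 1 = \min\{t-1, n-1-j\}$, which (after re-indexing $n \leadsto n$, noting $N/\ell N$ is still a module over the same $S$) is exactly condition $(E_{t-1})$ for $N/\ell N$; conversely $(E_{t-1})$ for $N/\ell N$ forces each non-vanishing $\Ext^j_S(M,S)$ to have depth at least $\min\{t, n-j\}$, hence $(E_t)$ for $M$, with the one degree $j = n$ handled separately since $\Ext^n_S(M,S)$ always has depth $\geq 0$. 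The only place the truncation at $n$ matters is the boundary case, which is why the dichotomy in the statement singles out $\Edepth(N/\ell N) = \Edepth(M) = n$: if $M$ satisfies $(E_t)$ for all $t$ (equivalently $M$ is sequentially Cohen-Macaulay and $\Edepth(M) = n$), then by Proposition \ref{Proposition SCM and Edepth} and part \eqref{prop H^0}, $N$ is sequentially Cohen-Macaulay of positive depth, $N/\ell N$ is sequentially Cohen-Macaulay as well, so $\Edepth(N/\ell N) = n$; otherwise $\Edepth(M) = t_0 < n$ is the exact supremum, the above correspondence gives $\Edepth(N/\ell N) \geq t_0 - 1$, and the failure of $(E_{t_0+1})$ for $M$ propagates to the failure of $(E_{t_0})$ for $N/\ell N$, so $\Edepth(N/\ell N) = t_0 - 1 = \Edepth(M) - 1$.

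The main obstacle I anticipate is bookkeeping the index shift and the convention $\depth(0) = +\infty$ cleanly across the $\Ext$-to-$\Ext$ short exact sequences — in particular making sure that the degrees $j$ for which $\Ext^j_S(M,S)$ or $\Ext^j_S(N/\ell N,S)$ vanish are exactly the ones where the condition $(E_\bullet)$ imposes nothing, so that no spurious constraint is gained or lost, and handling the single top degree $j = n$ (where $\min\{t,n-j\} = 0$) separately, since that is the one place where the sequences of Lemma \ref{lemma positive Edepth ses} do not directly apply. Once the dictionary "$M$ satisfies $(E_t)$ $\iff$ $N/\ell N$ satisfies $(E_{t-1})$, with the proviso at the top" is established, taking suprema and truncating at $n$ yields the stated trichotomy immediately.
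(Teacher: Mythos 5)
Your proposal is correct and follows essentially the same route as the paper: part (1) from additivity of $\Ext$ and $\depth$ over direct sums, part (2) from the long exact sequence of $0 \to H^0_\m(M) \to M \to N \to 0$ and the observation that $\Ext^n$ carries no $(E_t)$-constraint, and part (3) by splitting into the case $\Edepth(M)=n$ (handled via the sequentially Cohen-Macaulay characterization of Proposition \ref{Proposition SCM and Edepth} together with the fact that a strictly-filter-regular quotient of a sequentially Cohen-Macaulay module is again sequentially Cohen-Macaulay, which the paper cites from Herzog--Sbarra) and the case $\Edepth(M)=t<n$ (handled via the short exact sequences of Lemma \ref{lemma positive Edepth ses}, the identity $\depth\bigl(\Ext^{i+1}_S(N/\ell N,S)\bigr)=\depth\bigl(\Ext^i_S(M,S)\bigr)-1$ for nonvanishing $\Ext^i_S(M,S)$, and a witness index $i$ with $\depth(\Ext^i_S(M,S))=t<n-i$ to pin down the exact value). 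Your reindexing between $f_j$ and $e_{j-1}$ has a slight notational slip but the underlying arithmetic $\min\{t,n-j\}-1=\min\{t-1,n-(j+1)\}$ is exactly the computation the paper performs, so the argument is sound.
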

\begin{proof}
The proof of (\ref{prop direct sum}) follows immediately from the definitions. Let $N=M/H^0_\m(M)$. For $i \ne n$, we have that $\Ext^i_S(M,S) \cong \Ext^i_S(N,S)$. Since $\Ext^n_S(N,S)=0$, while $\Ext^n_S(M,S)$ has finite length, it is clear that $M$ satisfies condition $(E_t)$ for some $t$ if and only if $N$ does, and part (\ref{prop H^0}) follows. We now prove (\ref{prop modulo filter}). Let $N = M/H^0_\m(M)$. If $\Edepth(M)=n$, then $M$ is sequentially Cohen-Macaulay by Proposition \ref{Proposition SCM and Edepth}. By (\ref{prop H^0}) it follows that $N$ is sequentially Cohen-Macaulay, and so is $N/\ell N$ by \cite[Corollary 1.9]{HerzogSbarra}. In particular, as a module over $S$, 
we have that $\Edepth(N/\ell N)=n$, again by Proposition \ref{Proposition SCM and Edepth}. Now assume that $\Edepth(M) = t < n$. By part (\ref{prop H^0}) we have that $\Edepth(N) =t$, which is positive by assumption. By Lemma \ref{lemma positive Edepth ses}, if we let $\delta$ be the degree of $\ell$, we have graded short exact sequences
\[
\xymatrix{
0 \ar[r] & \Ext^i_S(M,S) \ar[r]^-{\cdot \ell} & \Ext^i_S(M(-\delta),S) \ar[r] & \Ext^{i+1}_S(N/\ell N,S) \ar[r] & 0
}
\]
for all $i < n$. Let $i$ be such that $\Ext^i_S(M,S) \ne 0$. The short exact sequences above show that 
\[
\depth(\Ext^{i+1}_S(N/\ell N,S)) = \depth(\Ext^{i}_S(M,S)) - 1 \geq \min\{t-1,n-(i+1)\}
\]
As this holds for all $i+1 \leq n$, we have that $\Edepth(N/\ell N) \geq t-1$. On the other hand, since $\Edepth(M) = t < n$, there must exist $i$ such that $\depth(\Ext^i_S(M,S)) =t < n-i$. Then $\depth(\Ext^{i+1}_S(N/\ell N,S)) = t-1 < n-(i+1)$, which shows that $\Edepth(N/\ell N)=t-1$.
\end{proof}

\begin{remark} Observe that if $\Edepth(M)$ is not assumed to be positive in Proposition \ref{proposition Edepth modulo filter regular element} (\ref{prop modulo filter}), then $\Edepth(N/\ell N)$ can even increase. Indeed, Example \ref{ex finite length LC 2dim} exhibits an integral $\kk$-algebra $R$ such that $\Edepth(R)=0$, but $\Edepth(R/\ell R) = 4$ for any non-zero linear form $\ell$, since $R/\ell R$ is one-dimensional, hence sequentially Cohen-Macaulay over $S=\kk[x,y,z,w]$.
\end{remark}

We conclude the section by providing examples of classes of modules with a given $\Edepth$. The relevance of the following construction will become clearer in the upcoming sections.

Let $t \geq 0$, and $R=A[y_1,\ldots,y_t]$ be a polynomial ring over a $\ZZ$-graded ring $A$. 
We put a $\ZZ \times \ZZ^t$-grading on $R$ as follows. Let $\eta_i \in \ZZ^{t+1}$ be the vector with $1$ in position $i$ and $0$ everywhere else. We set $\deg_R(a) = \deg_A(a) \cdot \eta_1$ for all $a \in A$, and $\deg_R(y_i) = \eta_{i+1}$.

\begin{example} Let $A=\kk[x_1,x_2]$, with the standard grading, and $R=A[x_3,x_4]$. Then $R$ is $\ZZ \times \ZZ^2$-graded. For instance, we have $\deg_R(x_1) = \deg_R(x_2) = (1,0,0)$, while $\deg_R(x_3) = (0,1,0)$ and $\deg_R(x_4) = (0,0,1)$.
\end{example}

Viewing $S$ as $A[y_1,\ldots,y_t]$, where $A=\kk[x_1,\ldots,x_{n-t}]$ and $y_i=x_{n-t+i}$, we see that $S$ is a $\ZZ\times \ZZ^t$-graded ring. Observe that a non-zero polynomial of $S$ is homogeneous with respect to this grading if and only if it is a monomial in the last $t$ variables, and is homogeneous with respect to the standard grading in the first $n-t$ variables. Throughout, whenever we claim that a module is $\ZZ \times \ZZ^t$ graded for some $t \geq 0$, we mean that it is graded with respect to this grading. When $t=0$, this simply means that the module 
is $\ZZ$-graded with respect to the standard grading on $S$.

Similar considerations can be done in the subrings $S_j = \kk[x_1,\ldots,x_j]$ of $S$. That is,  if $j \geq n-t$, we can view $S_j$ as $A_j[y_1,\ldots,y_{j-(n-t)}]$, where $A_j = \kk[x_1,\ldots,x_{n-t}]$ and $y_i = x_{n-t+i}$. In this way, $S_j$ is a $\ZZ \times \ZZ^{j-(n-t)}$-graded ring. 

Observe that, if $M$ is a $\ZZ \times \ZZ^t$-graded $S$-module, with $t>0$, we have that $M/x_nM$ is still $\ZZ \times \ZZ^t$-graded, and it can be identified with a $\ZZ \times \ZZ^{t-1}$-graded module over $S_{n-1}$. 

\begin{example}[Key Example] \label{key example}
Let $S=\kk[x_1,\ldots,x_n]$, and $M$ be a finitely generated $\ZZ \times \ZZ^t$-graded $S$-module such that $x_n,\ldots,x_{n-t+1}$ is a filter regular sequence for $M$. Then $\Edepth(M) \geq t$, and $x_n,\ldots,x_{n-t+1}$ is a strictly filter regular sequence for $M$.  

In fact, we can write $M=F/U$, where $F$ is a free $S$-module, and $U$ is a $\ZZ \times \ZZ^t$-graded submodule of $F$. Consider the saturation $U^{\sat} = U:x_n^\infty = \{\alpha \in F \mid x_n^r \alpha \in U$ for some $r \gg 0\}$. Since $x_n$ is a $\ZZ \times \ZZ^t$-homogeneous element, we have that $U^{\sat}$ is $\ZZ \times \ZZ^t$-graded itself, and so is $F/U^{\sat}$. Since $x_n$ is assumed to be filter regular, we actually have $F/U^{\sat} \cong M/H^0_\m(M)$. If $F/U^{\sat}=0$, then $M$ is zero dimensional, hence sequentially Cohen-Macaulay. In particular, $\Edepth(M) = n \geq t$, and any filter regular sequence is automatically strictly filter regular. Assume that $F/U^{\sat} \ne 0$, so that $x_n$ is a non-zero divisor on $F/U^{\sat}$. Since $F/U^{\sat}$ is $\ZZ \times \ZZ^t$-graded, we can write it as 
$F/U^{\sat} = \ov F/\ov U \otimes_\kk \kk[x_n]$, where $\ov F$ is a free graded $S_{n-1}$-module, and $\ov U$ is a $\ZZ \times \ZZ^{t-1}$-graded submodule of $\ov F$ such that $\ov{F}/\ov{U}$ can be identified with $F/U^{\sat} \otimes_S S/x_n S$. In particular, for all $i < n$ we have
\[
\Ext^i_S(M,S) \cong \Ext^i_S(F/U^{\sat},S) \cong \Ext^i_{S_{n-1}}(\ov F/\ov U,S_{n-1})\otimes_\kk \kk[x_n].
\]
Hence $x_n$ is a non-zero divisor on $\Ext^i_S(M,S)$ for all $i <n$, and it is then a strictly filter regular element for $M$. Moreover, we have that $\Edepth(M) >0$. Iterating this argument $t$-times gives the desired claim. 
\end{example}

We will make a more systematic use of the methods of Example \ref{key example} in the next sections. 

\section{Partial general initial modules and E-depth}
\label{Section gin}

Given integers $0 \leq t \leq n$, we consider the following $t \times n$ matrix:
\[
\Omega_{t,n}=\begin{bmatrix}
0 & 0 & \ldots & 0 & 0 & 0&  \ldots & 0 & -1 \\
0 & 0 & \ldots & 0 & 0 & 0&  \ldots & -1 & 0 \\
\vdots & \vdots & \vdots & \vdots & \vdots & \vdots&  \vdots & \vdots & \vdots \\
0 &  0 & \ldots & 0 & 0 &-1 & \ldots & 0 & 0 \\ 
0 &  0 & \ldots & 0 & -1 &0 & \ldots & 0 & 0 \\ 
0 & 0 & \ldots & -1 & 0 &0 & \ldots & 0 & 0
\end{bmatrix} 
\]
If we let $S=\kk[x_1,\ldots,x_n]$, then $\Omega_{t,n}$ induces a ``partial revlex'' term order on $S$. Given a finitely generated $\ZZ$-graded $S$-module $M$, we can present it as $M=F/U$, where $F$ is a finitely generated $\ZZ$-graded free $S$-module, with graded basis $\{e_1,\ldots,e_r\}$. Notice that an element $f \in F$ can be written uniquely as a finite sum of monomials of $F$, that is, we can write $f=\sum_j u_j e_{i_j}$ where the elements $u_j$ are monomials in $S$ and the sum has minimal support. Then, the initial form ${\rm in}_{{\rm rev}_t}(f)$ of $f$ with respect to the grading induced by $\Omega_{t,n}$ will be the sum of elements of the form $u_je_{i_j}$ from $f$ for which $u_j$ is maximal with respect to the order induced by $\Omega_{t,n}$ on $S$. Observe that, in general, ${\rm in}_{{\rm rev}_t}(f)$ may not be of the form $f'e_i$ for some $i=1,\ldots,r$. In other words, it may not live in one single free summand of $F$. And even if it is of that form, the coefficient $f'$ may not be a monomial of $S$. 

Given that the order ${\rm in}_{{\rm rev}_t}$ can be extended to $F$, it makes sense to consider the initial submodule ${\rm in}_{{\rm rev}_t}(U)$ of $U$ in $F$.
\begin{remark} \label{Remark gin} One can check that the one defined is a 
partial reverse lexicographic order (see \cite[15.7]{Eisenbud} for more details). In particular, we have 
\begin{enumerate}[(i)]
\item ${\rm in}_{{\rm rev}_t}(U:_Fx_n^s) = {\rm in}_{{\rm rev}_t}(U):_F x_n^s$ for all $s > 0$.
\item ${\rm in}_{{\rm rev}_t}(U+x_n F) = {\rm in}_{{\rm rev}_t}(U)+x_n F$.
\end{enumerate}
More generally, one could take the partial orders induced by the following $(t+1) \times n$ matrix:
\[
\Omega_{t,n}'=\begin{bmatrix}
0 & 0 & \ldots & 0 & 0 & 0&  \ldots & 0 & -1 \\
0 & 0 & \ldots & 0 & 0 & 0&  \ldots & -1 & 0 \\
\vdots & \vdots & \vdots & \vdots & \vdots & \vdots&  \vdots & \vdots & \vdots \\
0 &  0 & \ldots & 0 & 0 &-1 & \ldots & 0 & 0 \\ 
0 &  0 & \ldots & 0 & -1 &0 & \ldots & 0 & 0 \\ 
0 & 0 & \ldots & -1 & 0 &0 & \ldots & 0 & 0 \\
1 & 1 & \ldots & 1 & 1 &1 & \ldots & 1 & 1
\end{bmatrix},
\]
which takes also the degree of a monomial into account, or 
the one induced by $\Omega'_{t,n}$, and that successively defines that $ue_i > ve_j$ if $i<j$. In both these cases, properties (i) and (ii) listed above are still satisfied.  
Similarly, we would like to point out that one can also take into account the degrees of a graded basis of $F$. However, in order to define a revlex order (according to \cite[15.7]{Eisenbud}) satisfying properties (i) and (ii) above, such degrees should be considered only after all rows of $\Omega_{t,n}$ have been evaluated. 
\end{remark}

For the rest of this section we assume that $\kk$ is infinite. The goal is to define a ``partial general initial submodule'' of a given submodule $U$ of a free $S$-module $F$. 
\begin{definition} \label{Defn gin}
Let $S=\kk[x_1,\ldots,x_n]$, $F$ be a $\ZZ$-graded free $S$-module, and $U$ be a graded $S$-submodule of $F$. We say that the 
{\it partial general initial submodule} of $U$ satisfies a given property (P) if there exists a non-empty Zariski open set $\mathscr L$ of $t$-uples of linear forms such that for every point $\ell=(\ell_{n-t+1},\ldots,\ell_n) \in \mathscr L$ the module $F/{\rm in}_{{\rm rev}_t}({\bf g}_\ell(U))$ satisfies property (P), where ${\bf g}_\ell$ is the change of coordinates sending $\ell_i \mapsto x_i$ and that fixes the other variables.
\end{definition}

For instance, we will consider properties (P) such as having a specific Hilbert function, or a specific value for regularity, Betti numbers, etc. 

In fact, it is easy to see that such invariants and the corresponding non-empty Zariski open set where the property is constantly true or constantly false can be computed in the following way: let $\widetilde{\kk} = \kk(\alpha_{ij} \mid n-t+1 \leq n \leq t, 1 \leq j \leq n)$ be a purely transcendental field extension of $\kk$, and let $\widetilde{\ell}_i = \sum_{j=1}^n \alpha_{ij} x_j$. Consider the change of coordinates ${\bf \widetilde{g}}_\ell$ sending $\widetilde{\ell}_i \mapsto x_i$ and that fixes the other variables, and compute ${\rm in}_{{\rm rev}_t}({\bf \widetilde{g}}_\ell(U))$ and any of the invariants mentioned above. The algorithm for such calculations is based on repeated Gr{\"o}bner bases computations. Collect all non-zero coefficients in $\widetilde{\kk}$ which appear in the calculations. 
Observe that they are finitely many rational functions in $\kk[\alpha_{ij} \mid n-t+1 \leq i \leq n, 1 \leq j \leq n]$. We set $\mathscr{L}$ to be the Zariski open set of points where such functions are defined, and 
do not vanish. Since $\kk$ is infinite, the intersection is not empty.

By abusing notation, we will call any such submodule a general partial initial submodule of $U$, and denote it by $\gin_{{\rm rev}_t}(U)$. Thanks to the discussion above, we will therefore consider features such as {\it the} Hilbert function, {\it the} Betti numbers, and {\it the} 
Hilbert function of local cohomology modules of $\gin_{{\rm rev}_t}(U)$.

Let $\mathscr L$ be a Zariski open set consisting of $t$-uples of linear forms, that we can view as a Zariski open set in a projective space $\PP = \PP^{(n-1) \times t}$. To each point $\ell \in \mathscr L$ is associated a linear change of coordinates ${\bf g}_\ell$ defined as above. Vice versa, to each ${\bf g}_\ell$ we can associate a point $\ell \in \PP$. By abusing notation, we will henceforth refer to a Zariski open set of transformations of the form ${\bf g}_\ell$ to mean the above scenario.

Now consider the closed subspace $\PP^{\rm UP}$ of $\PP$ consisting of ``upper triangular'' $t$-uples of linear forms, that is, elements of the form $(\ell_{n-t+1},\ldots,\ell_n)$ where $\ell_i$ is a linear form supported on the variables $x_1,\ldots,x_i$. Observe that, for $\ell \in \mathscr L \cap \PP^{\rm UP}$, the corresponding change of coordinates ${\bf g}_\ell$ can be represented by an upper triangular matrix.
\begin{remark} \label{Remark UP}
In order to test whether a property (P) of a general initial submodule holds, it is sufficient to produce a non-empty Zariski open set in $\PP^{\rm UP}$ where (P) holds. In fact, let $\mathscr L$ be a Zariski open set of $t$-uples of linear forms. Associated to $\ell \in \mathscr L$ we have a change of coordinates ${\bf g}_\ell$ as in Definition \ref{Defn gin}, which can be represented as a matrix of the form
\begin{equation}
\label{form matrices}
{\bf g}_\ell = \begin{bmatrix} I_{n-t} & \star \\
0 & \star
\end{bmatrix},
\end{equation}
where $I_{n-t}$ is the identity matrix of size $n-t$, and $\begin{bmatrix} \star \\ \star \end{bmatrix}$ has size $n \times t$. By possibly shrinking the open set $\mathscr L$, we can factor such a matrix in the product of a lower triangular matrix with all entries equal to one on the main diagonal, and an upper triangular matrix: ${\bf g}_\ell = {\bf g}^{\rm LOW}_{\ell} {\bf g}^{{\rm UP}}_\ell$. Moreover, 
\begin{equation}
\label{Eq Aldo}
{\rm in}_{\rev{t}}({\bf g}_\ell(U)) = {\rm in}_{\rev{t}}( {\bf g}^{\rm LOW}_{\ell} {\bf g}^{{\rm UP}}_\ell(U)) = {\rm in}_{\rev{t}}(  {\bf g}^{{\rm UP}}_\ell(U)),
\end{equation}
where the last equality follows from standard properties of revlex-type orders. Thus, starting from a non-empty Zariski open set $\mathscr L \subseteq \PP$ where property (P) holds, one can produce a non-empty Zariski open set $\mathscr L^{\rm UP} \subseteq \PP^{\rm UP}$ where (P) still holds. Vice versa, assume that we are given a non-empty Zariski open set $\mathscr L^{\rm UP}$ inside $\PP^{\rm UP}$, so that the change of coordinates ${\bf g}_\ell$ corresponding to points in $\mathscr L^{\rm UP}$ are upper triangular. By acting on the set of such transformations with the following group of $n \times n$ matrices
\[
\left\{A=\begin{bmatrix} I_{n-t} & 0 \\ 0 & \star\end{bmatrix} \ \bigg| \  A=\{a_{ij}\} \text{ is lower triangular, and } a_{i,i}=1 \text{ for all } i=1,\ldots,n \right\},
\]
one obtains a non-empty Zariski open set of matrices ${\bf g}_\ell$ of the form (\ref{form matrices}) on which property (P) still holds by (\ref{Eq Aldo}). In other words, this gives a non-empty Zariski open subset $\mathscr L \subseteq \PP$ where (P) still holds.
\end{remark}

The following is an extension of \cite[Proposition 2.14]{Green} to our setting, which will be used in the proof of the main result of this section. Even if the argument is similar, it is more technical. Thus, we provide a proof for sake of completeness.
\begin{lemma} \label{Lemma gin Green} Let $S = \kk[x_1,\ldots,x_n]$ with the standard grading, $N$ be a non-negative integer and $t$ be a positive integer. Let $F$ be a free $S$-module, and $U$ be a graded submodule of $F$. For any sufficiently general linear form $h = \sum_{i=1}^n \alpha_i x_i$, we can identify $((U:h^N)+h F)/hF$ with a submodule $V_h$ of a free $S_{n-1} = \kk[x_1,\ldots,x_{n-1}]$-module $\ov F$ by setting $x_n = -\alpha_n^{-1}(\sum_{i=1}^{n-1} \alpha_i x_i)$. Consider a property (P). There exists a non-empty Zariski open set of linear forms $\mathscr H$ such that, for all $h \in \mathscr H$, the module $((\gin_{\rev{t}}(U):x_n^N)+x_nF)/x_n F$ satisfies (P) if and only if $\ov{\gin}_{\rev{t-1}}(V_{h})$ satisfies (P). Here, $\ov{\gin}_{\rev{t-1}}$ denotes a general partial initial submodule computed in $S_{n-1}$.
\end{lemma}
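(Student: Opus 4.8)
The plan is to reduce the statement to the already-established comparison properties of partial revlex orders (Remark \ref{Remark gin}) together with a genericity argument in the style of Green's proof of \cite[Proposition 2.14]{Green}. First I would fix the purely transcendental extension $\widetilde{\kk}$ and the generic linear forms $\widetilde{\ell}_{n-t+1},\ldots,\widetilde{\ell}_n$ used to compute $\gin_{\rev{t}}(U)$, and observe that by Remark \ref{Remark UP} we may assume all the changes of coordinates involved are upper triangular, so that the extra generic form $h$ only involves $x_1,\ldots,x_n$ and, after specializing, the operation ``intersect with the hyperplane $h=0$'' is realized algebraically by the substitution $x_n=-\alpha_n^{-1}(\sum_{i<n}\alpha_i x_i)$ as in the statement. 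The point is that computing $\gin_{\rev{t}}(U)$ in $S$, then saturating with respect to $x_n$, then cutting by $x_n$ — which is what the left-hand module $((\gin_{\rev{t}}(U):x_n^N)+x_nF)/x_nF$ records — should agree, as an $S_{n-1}$-module, with first cutting $U$ by a generic hyperplane $h$ and then computing a $\gin_{\rev{t-1}}$ downstairs.

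The key steps, in order, are: (1) Using Remark \ref{Remark gin}(i)–(ii), rewrite $((\gin_{\rev{t}}(U):x_n^N)+x_nF)/x_nF$ as $\In_{\rev{t}}$ of the corresponding operations applied to a generic coordinate change of $U$; more precisely $(\In_{\rev{t}}(\mathbf g(U)):x_n^N)+x_nF = \In_{\rev{t}}((\mathbf g(U):x_n^N)+x_nF)$, so the left-hand module is $F/\In_{\rev{t}}$ of a submodule obtained from $\mathbf g(U)$ by a saturation-and-quotient operation. (2) Show that, for generic $\mathbf g$, the image in $\ov F$ of $(\mathbf g(U):x_n^N)+x_nF$ under the substitution $x_n\mapsto -\alpha_n^{-1}(\sum_{i<n}\alpha_i x_i)$ coincides, up to the generic change of coordinates that makes $\widetilde\ell_{n-t+1},\ldots,\widetilde\ell_{n-1}$ generic in $S_{n-1}$, with a generic coordinate change of $V_h$ in $S_{n-1}$; this is exactly the content of Green's genericity lemma, adapted: a generic element of the parameter space for $\gin_{\rev{t}}$ upstairs projects to a generic element of the parameter space for $\gin_{\rev{t-1}}$ downstairs, and the ``$+x_nF$'' absorbs the last coordinate. (3) Check that $\In_{\rev{t}}$ of a submodule that already contains $x_nF$ is computed by $\In_{\rev{t-1}}$ after passing to $\ov F = F/x_nF$, because the last row of $\Omega_{t,n}$ (the one involving $x_n$) plays no role once everything is taken modulo $x_n$ — this is the order-compatibility that reduces $t$ to $t-1$. (4) Intersect the finitely many non-empty Zariski open sets produced along the way (the one defining $\gin_{\rev{t}}(U)$ upstairs, the one defining $\ov{\gin}_{\rev{t-1}}(V_h)$ downstairs, the one where the factorization of Remark \ref{Remark UP} is valid, and the one where $h$ is generic enough for step (2)); since $\kk$ is infinite this intersection is non-empty and gives the desired $\mathscr H$, and on it property (P) holds for one module iff it holds for the other simply because the two modules are equal (or isomorphic via a graded isomorphism of free modules).

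The main obstacle I expect is step (2): making precise the claim that genericity of the $t$ forms defining $\gin_{\rev{t}}(U)$ upstairs is ``transferred'' to genericity of the $t-1$ forms needed for $\ov{\gin}_{\rev{t-1}}(V_h)$ downstairs, while simultaneously the $t$-th (innermost) generic form gets traded for the cutting hyperplane $h$. One has to be careful that the parameter $\alpha_n$ used in the substitution $x_n = -\alpha_n^{-1}(\sum_{i<n}\alpha_i x_i)$ interacts correctly with the upper-triangular normalization, and that after this substitution the remaining generic forms are still algebraically independent over $\kk$ — this is where the transcendence-degree bookkeeping of Green's original argument has to be reproduced in the module setting, keeping track of the extra free summands of $F$. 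The quotient-by-$hF$ construction is what makes this bookkeeping more technical than in the ideal case, since $\In_{\rev{t}}$ of an element need not lie in a single summand of $F$; but because we only ever compare Hilbert-function-type data of the quotient $F/(\cdots)$, and the relevant orders restrict well to $\ov F$, this does not cause an essential difficulty beyond added notation.
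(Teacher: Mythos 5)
Your plan follows essentially the same route as the paper's proof: invoke Remark \ref{Remark UP} to reduce to upper triangular changes of coordinates, factor such a change as $\ov{\bf g}^{\rm UP}_\lambda \circ {\bf k}_{\lambda_n}$ so that the outer transformation fixes $x_n$, use Remark \ref{Remark gin}(i)--(ii) to move the operations $(\cdot):x_n^N$ and $(\cdot)+x_nF$ inside the initial submodule, and finally observe that passing to $\ov F = F/x_nF$ drops the weight row for $x_n$ and turns $\rev{t}$ into $\rev{t-1}$. The key step you flag as the main obstacle --- identifying the image of $({\bf g}(U):x_n^N)+x_nF$ in $\ov F$ with a generic coordinate change of $V_h$, while simultaneously passing genericity from the $t$-tuple upstairs to the $(t-1)$-tuple downstairs --- is exactly where the paper does its real work: it introduces the stratification $U' = {\bf k}_{\lambda_n}(U) = U'_{[0]} \sqcup U'_{[1]}x_n \sqcup \cdots$ by $x_n$-divisibility, proves the commutation $\In_{\rev{1}}(\ov{\bf g}^{\rm UP}_\lambda(U')) = \ov{\bf g}^{\rm UP}_\lambda(\In_{\rev{1}}(U'))$, uses $U'_{[i]}\subseteq U'_{[j]}$ for $i\le j$ to collapse the saturation to the single stratum $U'_{[N]}$, and then takes $\mathscr H$ to be the projection $\pi(\mathscr L^{\rm UP})$ onto the last factor. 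Your step (3) phrases the order-compatibility slightly differently (``$\In_{\rev{t}}$ of a submodule containing $x_nF$ is $\In_{\rev{t-1}}$ of the reduction, plus $x_nF$''), which is a correct and arguably cleaner way to see the same fact the paper records at the end of its proof. One small slip: the row of $\Omega_{t,n}$ carrying the weight on $x_n$ is the \emph{first} row, not the last, though this does not affect the substance of your argument.
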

\begin{proof}
First of all, observe that to compute ${\rm in}_{\rev{t}}$ we can first compute the initial submodule ${\rm in}_{\rev{1}}$ with respect to the first row of the matrix $\Omega_{t,n}$ introduced above, and then compute the initial submodule with respect to the remaining $t-1$ rows, that we denote by ${\rm in}_{\ov{\rev{t-1}}}$. 

By Remark \ref{Remark UP}, we can reduce to considering upper triangular changes of coordinates. In particular, we can find a non-empty Zariski open set $\mathscr L^{\rm UP} \subseteq \left(\PP^{(n-1)\times t} \right)^{\rm UP}$ such that the linear change of coordinates ${\bf g}_{\ell} = {\bf g}_{\ell}^{\rm UP}$ introduced in Definition \ref{Defn gin} is upper triangular and 
the given property (P) holds for $((\gin_{\rev{t}}(U):x_n^N)+x_n F)/x_n F$ if and only if it holds for $(({\rm in}_{\rev{t}}({\bf g}_\ell^{\rm UP}(U)):x_n^N)+x_n F)/x_n F$ for all $\ell \in  \mathscr L^{\rm UP}$.

For $\ell  = (\lambda_{n-t+1},\ldots,\lambda_{n-1},\lambda_n) \in \mathscr L^{\rm UP}$, set $\lambda = (\lambda_{n-t+1},\ldots,\lambda_{n-1})$. We can factor ${\bf g}_{\ell}^{\rm UP} $ as the composition ${\bf \ov{g}}_{\lambda}^{\rm UP} \circ {\bf k}_{\lambda_n}$, defined as follows: ${\bf k}_{\lambda_n}$ is the change of coordinates that fixes $x_i$ for $i \ne n$, and sends $\lambda_n \mapsto x_n$, while ${\bf \ov{g}}_{\lambda}^{\rm UP}$ is the change of coordinates  such that $x_i \mapsto x_i$ for $1 \leq i \leq n-t$ and $i=n$, and such that $\lambda_i \mapsto x_i$ for $n-t+1 \leq i \leq n-1$.

Let $U' = {\bf k}_{\lambda_n}(U)$. Proceeding in a similar manner as in \cite[Section 6]{Green}, where Green constructs partial elimination ideals for the lex order, we can write $U'$ as a disjoint union of sets
\[
U'= U'_{[0]} \sqcup U'_{[1]} x_n \sqcup U'_{[2]} x_n^2 \sqcup \cdots,
\]
where $U'_{[i]}x_n^i$ consists of the elements of $U'$ that are divisible by $x_n^i$, and not by $x_n^{i+1}$. It can easily be checked that $U'_{[i]} \subseteq U'_{[j]}$ if $i \leq j$. Given a polynomial $f=f(x_1,\ldots,x_n) \in S$, we set $\ov f = f(x_1,\ldots,x_{n-1},0) \in S_{n-1}$. Now, if $(f_1,\ldots,f_r) \in F = S^{\oplus r}$, we define $\ov{(f_1,\ldots,f_r)}=(\ov{f_1},\ldots,\ov{f_r}) \in \ov F$, where $\ov F$ is a free $S_{n-1}$-module, which can be identified with $F/x_n F$. Since $U'_{[i]}$ is a subset of $F$, it makes sense to define $\ov{U'_{[i]}} = \{\ov u \mid u \in U_{[i]}\}$ and $\ov{{\bf \ov g}_\lambda^{\rm UP}(U'_{[i]})}=\{\ov u \mid u \in {\bf \ov{g}}_{\lambda}^{\rm UP}(U'_{[i]})\}$. Since the change of coordinates ${\bf \ov{g}}_{\lambda}^{\rm UP}$ fixes $x_n$, and each other linear form $\lambda_i$ involved in such transformation does not have $x_n$ in its support, one can check that 
\begin{align*}
{\rm in}_{\rev{1}}({\bf \ov{g}}_{\lambda}^{\rm UP}(U')) & = \ov{{\bf \ov g}_\lambda^{\rm UP}(U'_{[0]})} \sqcup \ov{{\bf \ov g}_\lambda^{\rm UP}(U'_{[1]})} x_n \sqcup \ov{{\bf \ov g}_\lambda^{\rm UP}(U'_{[2]})} x_n^2 \sqcup \cdots \\
& = \ov{\bf{g}}_{\lambda}^{\rm UP}\left(\ov{U'_{[0]}} \sqcup \ov{U'_{[1]}} x_n \sqcup \ov{U'_{[2]}} x_n^2 \sqcup \cdots \right) = {\bf \ov{g}}_{\lambda}^{\rm UP}({\rm in}_{\rev{1}}(U')).
\end{align*}
Therefore
\begin{align*}
({\rm in}_{\rev{t}}({\bf g}_{\ell}^{\rm UP}(U)):x_n^N) + x_nF  & = ({\rm in}_{\rev{t}}({\bf \ov{g}}_{\lambda}^{\rm UP}(U')) : x_n^N) + x_n F \\
& = ({\rm in}_{\ov{\rev{t-1}}}({\bf \ov{g}}_{\lambda}^{\rm UP}({\rm in}_{\rev{1}}(U'))) : x_n^N) + x_n F \\
& = ({\rm in}_{\ov{\rev{t-1}}}({\bf \ov{g}}_{\lambda}^{\rm UP}(\ov{U'_{[0]}} \sqcup \ov{U'_{[1]}}x_n \sqcup \ov{U'_{[2]}} x_n^2 \sqcup \cdots)) : x_n^N) + x_n F \\ 
& = \left({\rm in}_{\ov{\rev{t-1}}}\left(\ov{{\bf \ov{g}}_{\lambda}^{\rm UP}(U'_{[0]})} \sqcup \ov{{\bf \ov{g}}_{\lambda}^{\rm UP}(U'_{[1]})}x_n \sqcup \cdots\right) : x_n^N\right) + x_n F.
\end{align*}
Because of how ${\rm in}_{\ov{\rev{t-1}}}$ is defined, we have that ${\rm in}_{\ov{\rev{t-1}}}\left(\ov{{\bf \ov{g}}_{\lambda}^{\rm UP}(U'_{[i]})}x_n^i\right) = {\rm in}_{\ov{\rev{t-1}}}\left(\ov{{\bf \ov{g}}_{\lambda}^{\rm UP}(U'_{[i]})}\right)x_n^i$. Therefore the last formula is equal to
\[
\left(\left({\rm in}_{\ov{\rev{t-1}}}\left(\ov{{\bf \ov{g}}_{\lambda}^{\rm UP}(U'_{[0]})}\right) \sqcup {\rm in}_{\ov{\rev{t-1}}}\left(\ov{{\bf \ov{g}}_{\lambda}^{\rm UP}(U'_{[1]})}\right)x_n \sqcup \cdots \right) : x_n^N\right) + x_n F.
\]
One can check that, as a set, ${\rm in}_{\ov{\rev{t-1}}}\left(\ov{{\bf \ov{g}}_{\lambda}^{\rm UP}(U'_{[i]})}x_n^i\right):x_n^N$ equals ${\rm in}_{\ov{\rev{t-1}}}\left(\ov{{\bf \ov{g}}_{\lambda}^{\rm UP}(U'_{[i]})}x_n^{i-N}\right)$ if $i \geq N$, and it equals ${\rm in}_{\ov{\rev{t-1}}}\left(\ov{{\bf \ov{g}}_{\lambda}^{\rm UP}(U'_{[i]})}\right)$ if $i<N$. Since $U'_{[i]} \subseteq U'_{[j]}$ if $i \leq j$, the above expression is equal to
\begin{align*}
\left({\rm in}_{\ov{\rev{t-1}}}\left(\ov{{\bf \ov{g}}_{\lambda}^{\rm UP}(U'_{[N]})}\right) \sqcup {\rm in}_{\ov{\rev{t-1}}}\left(\ov{{\bf \ov{g}}_{\lambda}^{\rm UP}(U'_{[N+1]})}\right)x_n \sqcup \cdots \right) + x_n F & = {\rm in}_{\ov{\rev{t-1}}}\left(\ov{{\bf \ov{g}}_{\lambda}^{\rm UP}(U'_{[N]})}\right) + x_n F \\
& = {\rm in}_{\ov{\rev{t-1}}}\left({\bf \ov{g}}_{\lambda}^{\rm UP}(\ov{U'_{[N]}})\right) + x_n F.
\end{align*}

Let $\pi: \left(\PP^{(n-1)\times t}\right)^{\rm UP} \to \PP^{n-1}$ be the map which sends $\ell=(\lambda_{n-t+1},\ldots,\lambda_n)$ to $\lambda_n$, and let $\mathscr H= \pi(\mathscr L^{\rm UP})$, which is a non-empty Zariski open set. Moreover, let $\mathscr L_h^{\rm UP} = \{\lambda=(\lambda_{n-t+1},\ldots,\lambda_{n-1}) \mid (\lambda_{n-t+1},\ldots,\lambda_{n-1},h) \in \mathscr L^{\rm UP}\}$. We have shown that, in order to decide whether $((\gin_{\rev{t}}(U):x_n^N) + x_n F)/x_n F$ satisfies the given property (P), one can just check whether for $h \in \mathscr H$ and $\lambda = (\lambda_{n-t+1},\ldots,\lambda_{n-1}) \in \mathscr L_h^{\rm UP}$, after the change of coordinates ${\bf k}_{\lambda_n}$ this is true for the submodule ${\rm in}_{\ov{\rev{t-1}}}({\bf \ov{g}}_{\lambda}^{\rm UP}(\ov{U'_{[N]}}))$ of the $S_{n-1}$-module $\ov F$.

For any $h \in \mathscr H$, we have that ${\bf k}_{h}((U:h^N)+h F) = (U':x_n^N) + x_n F = U'_{[N]} + x_n F$. Thus, after the change of coordinates ${\bf k}_h$, we may identify $V_{h}$ with $\ov{U'_{[N]}}$. Note that, for $\lambda \in \mathscr L_h^{\rm UP}$, the transformation ${\bf \ov{g}}_\lambda^{\rm UP}$ can be viewed as an upper triangular change of coordinates in $S_{n-1}$.

By possibly shrinking the open set $\mathscr L_h^{\rm UP}$, we may assume that the module  $\ov{\gin}_{\rev{t-1}}(V_h)$ satisfies (P) if and only if ${\rm in}_{\ov{\rev{t-1}}}({\bf \ov{g}}_\lambda^{\rm UP}(\ov{U'_{[N]}}))$ does for every $\lambda \in \mathscr L_h^{\rm UP}$. 
Finally, observe that a partial initial submodule ${\rm in}_{\ov{\rev{t-1}}}$ computed over $\ov{F}$ is the same as a partial initial submodule with respect to the matrix $\Omega_{t-1,n-1}$. Putting all these facts together, by Definition \ref{Defn gin} we can then say that for $h \in \mathscr H$ the module $\ov{\gin}_{\rev{t-1}}(V_h)$ satisfies (P) if and only if $((\gin_{\rev{t}}(U):x_n^N) + x_n F)/x_n F$ does, and this concludes the proof.
\end{proof}

We exhibit a first relation between $\gin_{\rev{t}}$ of a module and the notion of $\Edepth$.

\begin{proposition} \label{gin has Edepth}
Let $S=\kk[x_1,\ldots,x_n]$, and $M$ be a $\ZZ$-graded $S$-module of dimension $n$. Write $M=F/U$, where $F$ is a graded free $S$-module. If $t$ is an integer with $0 \leq t \leq n$, then $\Edepth(F/\gin_{{\rm rev}_t}(U)) \geq t$.
\end{proposition}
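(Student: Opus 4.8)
The plan is to reduce the statement to Example~\ref{key example}. Fix a $t$-uple $\ell=(\ell_{n-t+1},\ldots,\ell_n)$ of linear forms that is generic in the sense made precise below, and let ${\bf g}_\ell$ be the associated change of coordinates from Definition~\ref{Defn gin}, so that ${\bf g}_\ell(F/U)=F/{\bf g}_\ell(U)$. I will show that $F/{\rm in}_{\rev{t}}({\bf g}_\ell(U))$ is $\ZZ\times\ZZ^t$-graded and that $x_n,x_{n-1},\ldots,x_{n-t+1}$ is a filter regular sequence for it; granting this, Example~\ref{key example} yields $\Edepth(F/{\rm in}_{\rev{t}}({\bf g}_\ell(U)))\geq t$, and since this holds for all $\ell$ in a non-empty Zariski open set, it proves $\Edepth(F/\gin_{\rev{t}}(U))\geq t$ in the sense of Definition~\ref{Defn gin}. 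The $\ZZ\times\ZZ^t$-homogeneity is immediate from the definition of the order: for a standard-homogeneous $f\in F$, the initial form ${\rm in}_{\rev{t}}(f)$ is the sum of the terms $u_je_{i_j}$ of $f$ whose exponent vector in $(x_n,x_{n-1},\ldots,x_{n-t+1})$ is lexicographically smallest, so it is homogeneous both for the $\ZZ^t$-grading by the last $t$ variables and for the standard grading; since $U$, hence ${\bf g}_\ell(U)$, admits standard-homogeneous generators, ${\rm in}_{\rev{t}}({\bf g}_\ell(U))$ is a $\ZZ\times\ZZ^t$-graded submodule of $F$.

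For the filter regular sequence I first recall that a generic $t$-uple of linear forms is a filter regular sequence for any finitely generated module: $\ell_n$ can be taken outside the finite union $\bigcup_{\p\in\Ass^\circ(F/U)}\p$ of proper linear subspaces of degree-one forms, then $\ell_{n-1}$ outside the analogous union for $F/(U+\ell_nF)$, and so on; the intersection of these non-empty Zariski open conditions (together with the condition that ${\bf g}_\ell$ be invertible) is non-empty because $\kk$ is infinite. Since ${\bf g}_\ell$ sends $\ell_i\mapsto x_i$ and fixes $x_1,\ldots,x_{n-t}$, it follows that $x_n,x_{n-1},\ldots,x_{n-t+1}$ is a filter regular sequence for $F/{\bf g}_\ell(U)$. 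It remains to transfer this property from $F/{\bf g}_\ell(U)$ to its initial module.

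The transfer is the technical core, and I would isolate it as the following claim, proved by induction on $t$: if $V$ is a graded submodule of a graded free module $F$ over $\kk[x_1,\ldots,x_m]$ and $x_m,x_{m-1},\ldots,x_{m-t+1}$ is a filter regular sequence for $F/V$, then it is one for $F/{\rm in}_{\rev{t}}(V)$ as well. The case $t=0$ is vacuous. The ``$x_m$-step'' (which also covers $t=1$) is a Hilbert-function computation: $x_m$ is filter regular for $F/V$ iff $(V:x_m)/V$ has finite length, and since Gr{\"o}bner degeneration preserves Hilbert functions while Remark~\ref{Remark gin}(i) gives ${\rm in}_{\rev{t}}(V:x_m)={\rm in}_{\rev{t}}(V):x_m$, we get
\[
\HF\big((V:x_m)/V\big)=\HF\big(({\rm in}_{\rev{t}}(V):x_m)/{\rm in}_{\rev{t}}(V)\big)=\HF\big(0:_{F/{\rm in}_{\rev{t}}(V)}x_m\big),
\]
so $x_m$ is filter regular for $F/V$ iff it is for $F/{\rm in}_{\rev{t}}(V)$. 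For the inductive step ($t\geq 1$): $x_m$ is filter regular for $F/{\rm in}_{\rev{t}}(V)$ by the $x_m$-step, and by Remark~\ref{Remark gin}(ii) together with the splitting ``${\rm in}_{\rev{1}}$ first, then the remaining rows'' used in the proof of Lemma~\ref{Lemma gin Green}, one identifies $(F/{\rm in}_{\rev{t}}(V))/x_m(F/{\rm in}_{\rev{t}}(V))=F/({\rm in}_{\rev{t}}(V)+x_mF)$ with $\ov F/{\rm in}_{\rev{t-1}}(\ov V)$ over $\kk[x_1,\ldots,x_{m-1}]$, where $\ov F=F/x_mF$ and $\ov V$ is the image of $V$; since $x_{m-1},\ldots,x_{m-t+1}$ is a filter regular sequence for $(F/V)/x_m(F/V)=\ov F/\ov V$, the inductive hypothesis applied over $\kk[x_1,\ldots,x_{m-1}]$ gives that it is one for $\ov F/{\rm in}_{\rev{t-1}}(\ov V)$, and combining with the $x_m$-step proves the claim.

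Applying the claim with $m=n$ and $V={\bf g}_\ell(U)$ finishes the proof that $x_n,\ldots,x_{n-t+1}$ is a filter regular sequence for $F/{\rm in}_{\rev{t}}({\bf g}_\ell(U))$, and Example~\ref{key example} then delivers the conclusion. I expect the main obstacle to be the bookkeeping in this last transfer step — in particular the identification $F/({\rm in}_{\rev{t}}(V)+x_mF)\cong\ov F/{\rm in}_{\rev{t-1}}(\ov V)$ and, if one spells things out through generic coordinates, the compatibility of the various Zariski open sets of changes of coordinates: precisely the kind of careful argument carried out in the proof of Lemma~\ref{Lemma gin Green}, which I would mirror here, invoking Remark~\ref{Remark UP} to restrict to upper triangular coordinates throughout. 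The conceptual input, by contrast, is light: Example~\ref{key example} and the two revlex identities in Remark~\ref{Remark gin}.
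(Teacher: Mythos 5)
Your proof is correct and follows the same route as the paper's: reduce to Example~\ref{key example} by verifying that $F/{\rm in}_{\rev{t}}({\bf g}_\ell(U))$ is $\ZZ\times\ZZ^t$-graded and that $x_n,\ldots,x_{n-t+1}$ is a filter regular sequence for it. The only difference is one of detail — you spell out the transfer of filter regularity from $F/{\bf g}_\ell(U)$ to its initial module via an induction using both parts of Remark~\ref{Remark gin} and the ``${\rm in}_{\rev{1}}$ then ${\rm in}_{\ov{\rev{t-1}}}$'' factorization from the proof of Lemma~\ref{Lemma gin Green}, whereas the paper compresses that step into a one-line appeal to Remark~\ref{Remark gin}(i).
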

\begin{proof}
Observe that, by construction, $F/\gin_{{\rm rev}_t}(U)$ is $\ZZ \times \ZZ^t$-graded. In the notation of Definition \ref{Defn gin}, we have that $x_n,\ldots,x_{n-t+1}$ forms a filter regular sequence for $F/({\bf g}_\ell(U))$. It follows from \ref{Remark gin} (i) that they also form a filter regular sequence for $F/{\rm in}_{{\rm rev}_t}({\bf g}_\ell(U)) = F/\gin_{\rev{t}}(U)$. 
The claim now follows from Example \ref{key example}.
\end{proof}

Recall that, given a graded $S$-module $M$, we denote 
by $\HF(M)$ its Hilbert function, that is, the row vector whose entry in position $j$ equals $\dim_\kk(M_j)$.

The following is the main theorem of this section. As it will be pointed out later, this can be viewed as extension of the main result of Herzog and Sbarra in \cite{HerzogSbarra}. We follow closely the steps of their proof.

\begin{theorem} \label{theorem gin}
Let $S=\kk[x_1,\ldots,x_n]$, and $M$ be a finitely generated $\ZZ$-graded $S$-module, that we can write as a quotient $M=F/U$, where $F$ is a graded free $S$-module. For a given integer $0 \leq t \leq n$, we have that $\Edepth(M) \geq t$ if and only if $\HF(H^i_\m(F/U)) = \HF(H^i_\m(F/\gin_{{\rm rev}_t}(U)))$ for all $i \in \ZZ$.
\end{theorem}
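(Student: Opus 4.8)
The plan is to induct on $t$, using Lemma \ref{Lemma gin Green} to pass from $S$ to $S_{n-1}$ while peeling off one layer of the revlex order and one unit of $\Edepth$. The base case $t=0$ is trivial, since $\gin_{\rev{0}}(U)$ is computed with respect to the empty weight order and has the same Hilbert function of local cohomology as $U$ (this is just the classical statement that $\HF(H^i_\m(-))$ is constant on a Zariski-open set of coordinate changes, which is what $\gin_{\rev{0}}$ records). So assume $t \geq 1$ and the statement known for $t-1$ over any polynomial ring.

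For the forward direction, suppose $\Edepth(M) \geq t \geq 1$. Replacing $U$ by a generic change of coordinates (which affects neither side of the desired equality, by the open-set discussion in Section \ref{Section gin}), we may assume $x_n$ is a strictly filter regular element for $M$. Set $N = M/H^0_\m(M) = F/U^{\sat}$ and $\ov{M} = N/x_n N$. By Proposition \ref{proposition Edepth modulo filter regular element}(\ref{prop modulo filter}), either $\Edepth(\ov M) = n$ or $\Edepth(\ov M) = \Edepth(M) - 1 \geq t-1$; in both cases $\Edepth(\ov M) \geq t-1$. By the inductive hypothesis applied to $\ov M$ over $S_{n-1}$, the passage to $\gin_{\rev{t-1}}$ preserves the Hilbert function of all $H^i_{\m_{n-1}}(-)$. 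On the other hand, Lemma \ref{lemma positive Edepth ses} gives, for each $i>0$, a short exact sequence
\[
\xymatrix{
0 \ar[r] & H^{i-1}_\m(N/x_nN) \ar[r] & H^i_\m(M)(-1) \ar[r]^-{\cdot x_n} & H^i_\m(M) \ar[r] & 0,
}
\]
so that $\HF(H^i_\m(M))$ is determined by $\HF(H^{i-1}_\m(\ov M))$ for $i>0$ (and $\HF(H^0_\m(M))$ is determined separately, being the socle-type data that survives saturation). The key point is that the exact same mechanism applies to $F/\gin_{\rev{t}}(U)$: by Proposition \ref{gin has Edepth} it has $\Edepth \geq t > 0$, and by Remark \ref{Remark gin}(i),(ii) the operations $(-):x_n^\infty$ and $(-)+x_nF$ commute with taking $\In_{\rev{t}}$, so that $(F/\gin_{\rev{t}}(U))/x_n(\cdots)$ is, up to saturation, exactly the module $\ov\gin_{\rev{t-1}}(V_{x_n})$ produced in Lemma \ref{Lemma gin Green} (with $N=0$, or rather with the saturation exponent handled by that lemma). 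Chaining these identifications — $\HF(H^i_\m(F/U))$ determined by $\HF(H^{i-1}_{\m_{n-1}}(\ov M))$, which equals $\HF(H^{i-1}_{\m_{n-1}}(\ov\gin_{\rev{t-1}}(V_{x_n})))$ by induction, which in turn determines $\HF(H^i_\m(F/\gin_{\rev{t}}(U)))$ — yields the equality for $i>0$; the case $i=0$ follows because $\gin$ preserves the Hilbert function of $M$ itself and hence, together with the higher $H^i_\m$, pins down $\HF(H^0_\m)$ via the relation $\sum_i (-1)^i \HF(H^i_\m(\cdot))$ being a coordinate-change invariant.

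For the converse, suppose $\HF(H^i_\m(F/U)) = \HF(H^i_\m(F/\gin_{\rev{t}}(U)))$ for all $i$. We want $\Edepth(M) \geq t$, i.e. $\depth(\Ext^i_S(M,S)) \geq \min\{t, n-i\}$ for all $i$. By graded local duality this is a statement about the vanishing of certain graded pieces of $H^j_\m$ of the $\Ext$ modules, but it is cleaner to argue by descending induction as above: a generic hyperplane section reduces $t$ to $t-1$ and $n$ to $n-1$, so it suffices to show that the hypothesis is inherited by $\ov M$ and $V_{x_n}$ over $S_{n-1}$. Here is where the work concentrates: one must show that the equality of local cohomology Hilbert functions upstairs, for the revlex$_t$ order, forces the corresponding equality downstairs for revlex$_{t-1}$. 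This is extracted by combining the two short exact sequences of Lemma \ref{lemma positive Edepth ses} (which hold for $F/\gin_{\rev{t}}(U)$ unconditionally, by Proposition \ref{gin has Edepth}) with the observation that, in general (without any $\Edepth$ hypothesis on $M$), the long exact sequence only gives inequalities $\HF(H^i_\m(F/\gin_{\rev{t}}(U))) \leq$ the value predicted by exactness; the hypothesized equality of total tables then forces all connecting maps for $M$ to behave as in the split situation, which is precisely the condition $(E_t)$. I expect this converse direction — propagating the global Hilbert-function equality down one variable at a time and reading off $\depth$ bounds on the $\Ext$'s — to be the main obstacle, since it requires carefully matching the bookkeeping of Lemma \ref{Lemma gin Green} (the partial elimination modules $U'_{[N]}$ and the role of the saturation exponent $N$) with the dimension shift in Lemma \ref{lemma positive Edepth ses}, rather than any single hard inequality.
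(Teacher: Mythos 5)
Your forward direction follows essentially the paper's route: reduce to a generic change of coordinates so that $x_n$ is strictly filter regular, use Proposition \ref{proposition Edepth modulo filter regular element} and Lemma \ref{lemma positive Edepth ses} to descend to $F/(U^{\sat}+x_nF)$ over $S_{n-1}$, and apply the inductive hypothesis at level $t-1$. Your treatment of $i=0$ via an alternating-sum (Grothendieck--Serre) invariance is workable but indirect; the paper just notes $\HF(H^0_\m(F/U)) = \HF(U^{\sat}) - \HF(U)$ and that both $\HF(U)$ and $\HF(U^{\sat}) = \HF(\In_{\rev{t}}(U^{\sat}))$ are preserved by taking initial submodules (Remark \ref{Remark gin}(i)).

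The converse, however, is not actually proved in your sketch, and the gap is not where you locate it. You write that the equality of tables ``forces all connecting maps for $M$ to behave as in the split situation,'' and you predict that the difficulty is in matching the bookkeeping of Lemma \ref{Lemma gin Green} with the dimension shift of Lemma \ref{lemma positive Edepth ses}. But the bookkeeping is only needed to legitimize the inductive step once the splitting is established; the mechanism that \emph{produces} the splitting is entirely absent from your sketch. The paper argues by contradiction: if some $\cdot x_n \colon H^i_\m(F/U^{\sat}) \to H^i_\m(F/U^{\sat})$ were not surjective, take $i$ minimal; the partial exactness of the long exact sequence together with the assumed equality of tables for $F/U$ and $F/V$ (and the unconditional split exactness for $F/V$, via Proposition \ref{gin has Edepth} and Lemma \ref{lemma positive Edepth ses}) yields a degree $j$ with $\dim_\kk H^{i-1}_\m(F/(U^{\sat}+x_nF))_j > \dim_\kk H^{i-1}_\m(F/(V^{\sat}+x_nF))_j$. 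This is then contradicted by upper semi-continuity of $\HF(H^\bullet_\m(-))$ under passage to an initial submodule, combined with the identity $V^{\sat}+x_nF = \In_{\rev{t}}(U^{\sat}+x_nF)$ from equation (\ref{Eq revlex}). Your sketch mentions neither upper semi-continuity nor this identity; without them there is no way to turn ``the tables agree'' into ``the multiplication maps are surjective,'' and the inductive step cannot even begin (one first needs $\Edepth(F/U) > 0$ to invoke Proposition \ref{proposition Edepth modulo filter regular element}(\ref{prop modulo filter}) on $F/U$). This is the genuine missing idea.
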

\begin{proof}
After a general change of coordinates, we may assume that $V= {\rm in}_{{\rm rev}_t}(U)$ has the same properties as $\gin_{\rev{t}}(U)$. We may also assume that $x_n,\ldots,x_{n-t+1}$ forms a strictly filter regular sequence for $F/U$. 
Since strictly filter regular sequences are filter regular sequences, it follows from Remark \ref{Remark gin} (i) that $x_n,\ldots,x_{n-t+1}$ forms a filter regular sequence for $F/V$. Since $V$ is $\ZZ \times \ZZ^t$-graded, it then follows from Example \ref{key example} that  $x_n,\ldots,x_{n-t+1}$ forms a strictly filter regular sequence also for $F/V$. Furthermore, by Remark \ref{Remark gin} we have
\[
V^{\sat} = \In_{\rev t}(U):_F x_n^\infty = \In_{\rev t}(U:_F x_n^\infty) = \In_{\rev t}(U^{\sat}),
\]
and also
\begin{equation} \label{Eq revlex}
V^{\sat}+x_n F = \In_{\rev t}(U^{\sat}) + x_n F = \In_{\rev t}(U^{\sat} + x_n F).  \
\end{equation}

We first prove that if $\Edepth(M) \geq t$, then there is equality for the Hilbert functions of local cohomology modules. We proceed by induction on $t \geq 0$, the case $t=0$ being trivial (note that $\In_{\rev 0}(U)=U$).
By Proposition \ref{proposition Edepth modulo filter regular element} (\ref{prop H^0}) we have that $\Edepth(F/U^{\sat}) \geq t>0$, where $U^{\sat} = U:_F x_n^{\infty}$. It follows from Proposition \ref{proposition Edepth modulo filter regular element} (\ref{prop modulo filter}) that $\Edepth(F/(U^{\sat}+x_n F)) \geq t-1$. By induction we have that $\HF(H^i_\m(F/(U^{\sat}+x_n F))) = \HF(H^i_\m(F/(V^{\sat}+x_n F)))$ for all $i \in \ZZ$.
Since $\Edepth(F/U^{\sat}) \geq t>0$, by Lemma \ref{lemma positive Edepth ses} we have short exact sequences 
\[
\xymatrix{
0 \ar[r] & H^i_\m(F/(U^{\sat}+x_n F)) \ar[r] & H^{i+1}_\m(F/U)(-1) \ar[r]^-{\cdot x_n} & H^{i+1}_\m(F/U) \ar[r] & 0
}
\]
for all $i \geq 0$. By Proposition \ref{gin has Edepth} and Lemma \ref{lemma positive Edepth ses}, we have analogous short exact sequences for $F/V$:
\[
\xymatrix{
0 \ar[r] & H^i_\m(F/(V^{\sat}+x_n F)) \ar[r] & H^{i+1}_\m(F/V)(-1) \ar[r]^-{\cdot x_n} & H^{i+1}_\m(F/V) \ar[r] & 0.
}
\]
Let $\HS(M) = \sum_{j  \in \ZZ} \dim_\kk(M_j) z^j$ be the Hilbert series of $M$. For $i \geq 0$, we have
\begin{align*}
\HS(H^{i+1}_\m(F/U))(z-1)& = \HS(H^i_\m(F/(U^{\sat}+x_n F))) \\
& = \HS(H^i_\m(F/(V^{\sat}+x_n F))) =\HS(H^{i+1}_\m(F/V))(z-1),
\end{align*}
and thus $\HF(H^i_\m(F/U)) = \HF(H^i_\m(F/V))$ for all $i >0$. Finally, we have $\HF(F/U) = \HF(F/V)$, and also $\HF(F/U^{\sat}) = \HF(F/{\rm in}_{{\rm rev}_t}(U^{\sat})) = \HF(F/V^{\sat})$, by Remark \ref{Remark gin} (i). Therefore we conclude that
\[
\HF(H^0_\m(F/U)) = \HF(U^{\sat}/U) = \HF(V^{\sat}/V) = \HF(H^0_\m(F/V)).
\]

To prove the converse, assume that the local cohomology modules of $F/U$ and $F/V$ have the same Hilbert function. We want to prove by induction on $t \geq 0$ that $\Edepth(F/U) \geq t$. If $t=0$ there is nothing to show.  By Proposition \ref{proposition Edepth modulo filter regular element} (\ref{prop H^0}), it suffices to show that $\Edepth(F/U^{\sat}) \geq t$. 
We claim that the set $\mathcal S=\{j \in \ZZ_{\geq 0} \mid H^j_\m(F/U^{\sat}) \stackrel{\cdot x_n}{\longrightarrow} H^j_\m(F/U^{\sat})$ is not surjective$\}$ is empty. If not, let $i = \min \mathcal S$, and observe that $i>0$, since $H^0_\m(F/U^{\sat})=0$. Then we have an exact sequence
\[
\xymatrix{
0 \ar[r] & H^{i-1}_\m(F/(U^{\sat}+x_n F)) \ar[r] & H^i_\m(F/U)(-1) \ar[r]^-{\cdot x_n} & H^i_\m(F/U).
}
\]
Since the rightmost map is not surjective by choice of $i$, there exists $j \in \ZZ$ such that
\begin{align*}
\dim_\kk(H^{i-1}_\m(F/(U^{\sat}+x_n F))_j) & > \dim_\kk(H^i_\m(F/U)_{j-1})-\dim_\kk(H^i_\m(F/U)_j) \\
& = \dim_\kk(H^i_\m(F/V)_{j-1})-\dim_\kk(H^i_\m(F/V)_j) \\
 & = \dim_\kk(H^{i-1}_\m(F/(V^{\sat}+x_n F))_j).
\end{align*}
Here, we used that $\dim_\kk(H^i_\m(F/U)_j) = \dim_\kk(H^i_\m(F/V)_j)$ for all $j \in \ZZ$ by assumption, and that the sequence 
 \[
\xymatrix{
0 \ar[r] & H^{i-1}_\m(F/(V^{\sat}+x_n F)) \ar[r] & H^i_\m(F/V)(-1) \ar[r]^-{\cdot x_n} & H^i_\m(F/V) \ar[r] & 0
}
\]
is exact by Proposition \ref{gin has Edepth} and Lemma \ref{lemma positive Edepth ses}. However, by upper semi-continuity and using (\ref{Eq revlex}), we obtain
\[
\HF(H^{i-1}_\m(F/(U^{\sat}+x_n F))) \leq \HF(H^{i-1}_\m(F/{\rm in}_{{\rm rev}_t}(U^{\sat}+x_n F))) = \HF(H^{i-1}_\m(F/(V^{\sat}+x_n F))).
\]  
This contradicts the inequality obtained above, and hence the set $\mathcal S$ is empty. This implies that $\Edepth(F/U)>0$ by Lemma \ref{lemma positive Edepth ses}, and also that $\HF(H^i_\m(F/(U^{\sat}+x_n F))) = \HF(H^i_\m(F/(V^{\sat}+x_n F)))$ for all $i \in \ZZ$. 
Viewing $F/(U^{\sat}+x_n F)$ as a quotient $\ov F/\ov U$ of a finitely generated free $S_{n-1}$-module $\ov F$, one 
still has equality of Hilbert functions for the local cohomology modules of $\ov F/\ov U$ and $\ov F/\ov V$, where $\ov V={\rm in}_{{\rm rev}_{t-1}}(\ov U)$. Moreover, by Lemma \ref{Lemma gin Green}, $\ov V$ has the same properties as $\gin_{\rev{t-1}}(\ov U)$. 
By induction, we then have $\Edepth(F/(U^{\sat}+ x_n F)) \geq t-1$ as a module over $S_{n-1}$. Since $\Edepth(F/U^{\sat}) = \Edepth(F/U)>0$ by Proposition \ref{proposition Edepth modulo filter regular element} (\ref{prop H^0}), we conclude by Proposition \ref{proposition Edepth modulo filter regular element} (\ref{prop modulo filter}) that either $\Edepth(F/U) = \Edepth(F/U^{\sat}+x_nF)= n \geq t$, or $\Edepth(F/U)<n$ and $\Edepth(F/U) = \Edepth(F/(U^{\sat}+x_n F))+1 \geq t$. In both cases, the desired inequality is obtained.
\end{proof}

Observe that the proof of Theorem \ref{theorem gin} can be adapted to the general initial submodule of any revlex order which satisfies conditions (i), (ii) of Remark \ref{Remark gin}, and Lemma \ref{Lemma gin Green}. In particular, we recover \cite[Theorem 3.1]{HerzogSbarra}.
\begin{corollary} \label{gin SCM}
Let $S = \kk[x_1,\ldots,x_n]$, and $M$ be a finitely generated $\ZZ$-graded $S$-module. Write $M=F/U$, where $F$ is a free $S$-module and $U$ is a graded submodule. Then $M$ is sequentially Cohen-Macaulay if and only if $\HF(H^i_\m(F/U)) = \HF(H^i_\m(F/\gin_{{\rm revlex}}(U)))$ for all $i \in \ZZ$.
\end{corollary}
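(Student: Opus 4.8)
The plan is to derive Corollary \ref{gin SCM} from Theorem \ref{theorem gin} by taking $t=n$ and checking that the revlex order and its associated $\gin_{{\rm revlex}}$ coincide with $\rev{n}$ and $\gin_{\rev{n}}$ in this excerpt's sense. First I would observe that the matrix $\Omega_{n,n}$ is exactly the defining matrix of the usual reverse-lexicographic term order on $S=\kk[x_1,\dots,x_n]$ (up to the customary conventions for breaking ties in a free module), so that $\In_{\rev{n}}(U)=\In_{{\rm revlex}}(U)$ for every graded submodule $U$ of a free module $F$; hence the Zariski-open-set construction defining $\gin_{\rev{n}}(U)$ agrees with the classical construction of $\gin_{{\rm revlex}}(U)$, and in particular the Hilbert functions of the local cohomology modules of $F/\gin_{\rev{n}}(U)$ and $F/\gin_{{\rm revlex}}(U)$ are the same. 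This is where I would invoke the remark immediately preceding the corollary: the revlex order satisfies (i) and (ii) of Remark \ref{Remark gin} and the conclusion of Lemma \ref{Lemma gin Green}, so the full strength of the proof of Theorem \ref{theorem gin} applies verbatim.

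Next I would apply Proposition \ref{Proposition SCM and Edepth}: $M$ is sequentially Cohen-Macaulay if and only if $\Edepth(M)=n$, equivalently $\Edepth(M)\geq n$ (it is always $\leq n$ by definition). Combining this equivalence with Theorem \ref{theorem gin} specialized to $t=n$ yields: $M$ is sequentially Cohen-Macaulay $\iff \Edepth(M)\geq n \iff \HF(H^i_\m(F/U))=\HF(H^i_\m(F/\gin_{\rev{n}}(U)))$ for all $i\in\ZZ \iff \HF(H^i_\m(F/U))=\HF(H^i_\m(F/\gin_{{\rm revlex}}(U)))$ for all $i\in\ZZ$, which is precisely the statement of the corollary.

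I do not anticipate a serious obstacle here, since this is essentially a bookkeeping argument assembling previously established results; the only point requiring a little care is confirming that the weight-order machinery built from $\Omega_{t,n}$ genuinely recovers the classical revlex order when $t=n$ — in particular that conditions (i) and (ii) of Remark \ref{Remark gin} and Lemma \ref{Lemma gin Green} hold for it, so that the inductive proof of Theorem \ref{theorem gin} goes through unchanged. Since this is exactly the content of the sentence preceding the corollary in the excerpt, I would simply cite that observation and \cite[Theorem 3.1]{HerzogSbarra} for the identification with Herzog--Sbarra's original result. The proof is therefore only a couple of lines: invoke Proposition \ref{Proposition SCM and Edepth}, then Theorem \ref{theorem gin} with $t=n$, then the identification $\gin_{\rev{n}}=\gin_{{\rm revlex}}$.
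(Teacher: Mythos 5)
Your proposal is correct and follows essentially the same route as the paper: the paper itself presents Corollary \ref{gin SCM} as an immediate consequence of Theorem \ref{theorem gin} together with the observation (stated in the sentence immediately preceding the corollary) that the proof of Theorem \ref{theorem gin} applies to any revlex-type order satisfying Remark \ref{Remark gin} (i)--(ii) and Lemma \ref{Lemma gin Green}, which covers the classical revlex order. Your additional explicit invocation of Proposition \ref{Proposition SCM and Edepth} to translate sequential Cohen--Macaulayness into $\Edepth(M)=n$ is exactly the intended reading.
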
 

\section{Decomposition of local cohomology tables} \label{Section Decomposition}

For convenience of the reader, we recall the grading introduced in Section \ref{Section Edepth}. Let $t \geq 0$, and $R=A[y_1,\ldots,y_t]$ be a polynomial ring over a $\ZZ$-graded ring $A$. 
We put a $\ZZ \times \ZZ^t$-grading on $R$ as follows. Let $\eta_i \in \ZZ^{t+1}$ be the vector with $1$ in position $i$ and $0$ everywhere else. We set $\deg_R(a) = \deg_A(a) \cdot \eta_1$ for all $a \in A$, and $\deg_R(y_i) = \eta_{i+1}$. 
We recall the notation $S_j$ for the subring $\kk[x_1,\ldots,x_j]$ of $S$, and we let $\m_j = (x_1,\ldots,x_j)S_j$. If $j \geq n-t \geq 0$, we recall that $S_j$ is a $\ZZ \times \ZZ^{j-(n-t)}$-graded ring. 
\begin{lemma} \label{lemma family N}
Let $S=\kk[x_1,\ldots,x_n]$, and $M$ be a $\ZZ \times \ZZ^t$-graded module. 
Assume that $x_n,x_{n-1},\ldots,x_{n-t+1}$ forms a filter regular sequence for $M$. 
There exists a family $\{\NNN_j\}_{j=n-t}^n$ of modules that satisfies the following conditions:
\begin{itemize}
\item $\NNN_n = M$.
\item Each $\NNN_j$ is either zero, or is a finitely generated $\ZZ \times \ZZ^{j-(n-t)}$-graded module over $S_j$ of Krull dimension $\dim(\NNN_j) = \dim(M)-(n-j)$. 
\item $x_{j},\ldots,x_{n-t+1}$ is a strictly filter regular sequence on $\NNN_{j}$ for every $j=n-t+1,\ldots,n$.
\item For all $j$ we have $\NNN_j/H^0_{\m_j}(\NNN_j) \cong \NNN_{j-1} \otimes_\kk \kk[x_{j}]$.
\end{itemize}
\end{lemma}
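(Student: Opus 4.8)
The plan is to construct the family $\{\NNN_j\}$ by a descending induction on $j$, starting from $\NNN_n = M$ and at each step passing to a quotient by the socle part (local cohomology) followed by ``dividing out'' the last active variable. The key observation, already exploited in Example \ref{key example}, is that if $N$ is $\ZZ \times \ZZ^s$-graded over $S_j$ (with $j \geq n-t$, $s = j-(n-t)$) and $x_j$ is filter regular for $N$, then $N/H^0_{\m_j}(N)$ is again $\ZZ \times \ZZ^s$-graded and $x_j$ is a \emph{non-zero divisor} on it; hence $N/H^0_{\m_j}(N) \cong \ov N \otimes_\kk \kk[x_j]$ for a uniquely determined $\ZZ \times \ZZ^{s-1}$-graded module $\ov N$ over $S_{j-1}$. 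We then set $\NNN_{j-1} = \ov N$. Concretely: given $\NNN_j$, if it is zero we set all subsequent $\NNN_{j'} = 0$; otherwise we let $\NNN_{j-1}$ be the $S_{j-1}$-module with $\NNN_j/H^0_{\m_j}(\NNN_j) \cong \NNN_{j-1}\otimes_\kk \kk[x_j]$, which immediately gives the fourth bullet point.

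The verification of the remaining bullets proceeds as follows. \emph{Finite generation and grading:} writing $\NNN_j = F/V$ for a free $\ZZ\times\ZZ^s$-graded $S_j$-module $F$ and a $\ZZ\times\ZZ^s$-graded submodule $V$, the saturation $V^{\sat} = V:_F x_j^\infty$ is $\ZZ\times\ZZ^s$-graded since $x_j$ is a $\ZZ\times\ZZ^s$-homogeneous element, and $F/V^{\sat} \cong \NNN_j/H^0_{\m_j}(\NNN_j)$ because $x_j$ is filter regular; the identification with a module extended from $S_{j-1}$ produces $\NNN_{j-1}$ as a finitely generated $\ZZ\times\ZZ^{s-1}$-graded $S_{j-1}$-module, exactly as in Example \ref{key example}. \emph{Dimension:} since $\NNN_j/H^0_{\m_j}(\NNN_j) \cong \NNN_{j-1}\otimes_\kk\kk[x_j]$ and passing to the quotient by $H^0_{\m_j}$ does not change dimension (when the module is nonzero), we get $\dim(\NNN_{j-1}) = \dim(\NNN_j/H^0_{\m_j}(\NNN_j)) - 1 = \dim(\NNN_j) - 1$, and the formula $\dim(\NNN_j) = \dim(M) - (n-j)$ follows by induction; one should note that if $\dim(\NNN_j) = 0$ then $\NNN_j/H^0_{\m_j}(\NNN_j) = 0$, so $\NNN_{j-1} = 0$, consistent with the ``either zero'' clause and with the convention. \emph{Strictly filter regular sequence:} here we use that for a $\ZZ\times\ZZ^s$-graded module over $S_j$ on which $x_j$ is filter regular, Example \ref{key example} shows $x_j$ is in fact \emph{strictly} filter regular, and moreover $\Ext^i_{S_j}(\NNN_j, S_j) \cong \Ext^i_{S_{j-1}}(\NNN_{j-1}, S_{j-1})\otimes_\kk\kk[x_j]$ for $i < j$; combined with the fact that $x_{j-1},\ldots,x_{n-t+1}$ remains a filter regular sequence on $\NNN_{j-1}$ (which must be checked from the hypothesis that $x_n,\ldots,x_{n-t+1}$ is filter regular on $M$), another application of Example \ref{key example} gives that it is strictly filter regular on $\NNN_{j-1}$, and then inductively that $x_j,\ldots,x_{n-t+1}$ is strictly filter regular on $\NNN_j$ for each $j$.

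The main obstacle I anticipate is the bookkeeping needed to propagate the filter-regularity hypothesis down the family: one must show that if $x_n,\ldots,x_{n-t+1}$ is a filter regular sequence on $M = \NNN_n$, then at each stage $x_j$ is filter regular on $\NNN_j$ (so that the socle quotient behaves well) and $x_{j-1},\ldots,x_{n-t+1}$ remains filter regular on $\NNN_{j-1}$. This requires understanding how $\Ass^\circ$ transforms under the two operations ``mod out $H^0_{\m_j}$'' and ``identify as extended from $S_{j-1}$''; the first operation only removes $\m_j$ from the associated primes, and the second is compatible with the correspondence between primes of $S_j$ not containing $x_j$ and primes of $S_{j-1}$, so filter regularity of the remaining variables descends, but making this precise — and reconciling the indexing of which variables are ``active'' versus ``already extended over'' — is where the care is needed. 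Everything else is a direct assembly of facts already established in Example \ref{key example} and Proposition \ref{proposition Edepth modulo filter regular element}.
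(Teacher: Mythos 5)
Your construction is the same one the paper uses: descending induction on $j$, passing from $\NNN_j$ to $\NNN_{j-1}$ by killing $H^0_{\m_j}$ and then contracting along $x_j$, using the $\ZZ\times\ZZ^s$-grading exactly as in Example \ref{key example} to identify $\NNN_j/H^0_{\m_j}(\NNN_j)$ with $\NNN_{j-1}\otimes_\kk\kk[x_j]$, computing dimension, and reading off the isomorphism $\Ext^i_{S_j}(\NNN_j,S_j)\cong\Ext^i_{S_{j-1}}(\NNN_{j-1},S_{j-1})\otimes_\kk\kk[x_j]$ for $i<j$ to conclude strict filter regularity of $x_j$ on $\NNN_j$. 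All the key steps in the paper's proof appear in your proposal.

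On the ``bookkeeping'' you flag at the end: the concern is legitimate (the paper itself asserts ``$x_j$ is filter regular over $\NNN_j$'' in the inductive step without deriving it from the hypothesis on $M$), but your sketched route via $\Ass^\circ$ alone is not the cleanest way to close it. Removing $\m_j$ from $\Ass$ explains the quotient by $H^0_{\m_j}$, and the prime-extension correspondence explains the identification with a module extended from $S_{j-1}$, but filter regularity of the \emph{sequence} $x_{j-1},\ldots,x_{n-t+1}$ on $\NNN_{j-1}$ concerns associated primes of the successive quotients $\NNN_{j-1}/(x_{j-1},\ldots,x_{j-r})\NNN_{j-1}$, which are not determined by $\Ass^\circ(\NNN_{j-1})$ or $\Ass^\circ(\NNN_j)$. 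The cleaner observation is that for each $j$ the natural surjection $M/(x_n,\ldots,x_{j+1})M\twoheadrightarrow\NNN_j$ has kernel of finite length (built up step by step from images of the $H^0_{\m_i}$'s), and a filter regular sequence on $B$ passes to $C$ whenever $0\to K\to B\to C\to 0$ is exact with $K$ of finite length: indeed $0:_C\ell_1$ is squeezed between $0:_B\ell_1$ and $\Ext^1(S/(\ell_1),K)$, both of finite length, and one then induces on the length of the sequence after again adjusting by a finite-length kernel. With that in hand, filter regularity of $x_j$ on each $\NNN_j$ follows directly from the hypothesis on $M$, which is exactly what the inductive construction requires.
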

\begin{proof}
We construct such modules inductively, starting from $\mathcal N_n = M$. 
Assume that $\NNN_j$ has been constructed for some $n-t+1 \leq j \leq n$. Since $\NNN_j$ is $\ZZ \times \ZZ^{j-(n-t)}$-graded over $S_{j}$, and $x_{j}$ is filter regular over $\NNN_j$, we have that $\NNN_j/H^0_{\m_j}(\NNN_j) = \NNN_{j-1} \otimes_\kk \kk[x_j]$ for some finitely generated $\ZZ \times \ZZ^{(j-1)-(n-t)}$-graded $S_{j-1}$-module, that we call $\NNN_{j-1}$. From this, we see that $\NNN_j/H^0_{\m_j}(\NNN_j) \otimes_{S_{j}}S_{j} /x_{j}S_j$ can be identified with $\NNN_{j-1}$. 
If $\dim(\NNN_j) = 0$, then $\NNN_j/H^0_{\m_j}(\NNN_j) = 0$, and thus $\NNN_{j-1}=0$. Otherwise $\NNN_{j-1}$ has dimension $\dim(\NNN_j)-1 = \dim(M)-(n-j+1)$. Moreover, for $i<j$ we have
\[
\Ext^i_{S_j}(\NNN_j,S_j) \cong \Ext^i_{S_j}(\NNN_j/H^0_{\m_j}(\NNN_j),S_j) \cong \Ext^i_{S_{j-1}}(\NNN_{j-1},S_{j-1}) \otimes_\kk \kk[x_j].
\]
This shows that $x_j$ is a non-zero divisor on $\Ext^i_{S_j}(\NNN_j,S_j)$ for all $i<j$, and therefore it is a strictly filter regular element for 
$\NNN_j$. By construction, we see that it is also strictly filter regular for $\NNN_s$, for all $s \geq j$, and the proof is complete. 
\end{proof}

\begin{notation} \label{Notation filtration}
Let $S=\kk[x_1,\ldots,x_n]$, and $M$ be a finitely generated $\ZZ$-graded $S$-module such that $x_n,\ldots,x_{n-j+1}$ is a filter regular sequence for $M$. We consider the following chain of submodules of $M$:
\[
\mathcal{Q}_0 = 0:_M x_n^\infty \subseteq \mathcal{Q}_1 = (x_n)M:_M x_{n-1}^\infty \subseteq \ldots \subseteq \mathcal{Q}_j=(x_{n-j+1},\ldots,x_n)M:_M x_{n-j}^\infty.
\]
We set $\mathcal{M}_0=\mathcal{Q}_0$, and for $1 \leq i \leq j$ we let
\[
\MM_i = 
\frac{\mathcal{Q}_i}{\mathcal{Q}_{i-1}+x_{n-i+1}M}.
\]
\end{notation}
Observe that $(x_{n-i+1},\ldots,x_n)\MM_i=0$, hence we will view $\MM_i$ 
as a finitely generated $\ZZ$-graded $S_{n-i}$-module. Because of its definition, it is also a finite dimensional graded $\kk$-vector space. 
\begin{remark} \label{Remark N = M}
If $\{\NNN_j\}_{j=n-t}^n$ is the family of modules constructed in Lemma \ref{lemma family N} for $M$, then one can readily check that $H^0_{\m_{n-j}}(\NNN_{n-j}) \cong \MM_{j}$ for all $j$ as graded $\kk$-vector spaces.
\end{remark}

\begin{proposition} \label{proposition LC}
Let $S=\kk[x_1,\ldots,x_n]$, and $M$ be a finitely generated 
$S$-module. 
Assume that $M$ is $\ZZ \times \ZZ^t$-graded for some $t \geq 1$, and that $x_n,x_{n-1},\ldots,x_{n-t+1}$ form a filter regular sequence on $M$. Let $\{\NNN_j\}_{j=n-t}^n$ be the family of modules constructed in Lemma \ref{lemma family N}. We have graded $\kk$-vector space isomorphisms: 
\[
H^j_\m(M) \cong \begin{cases} \MM_j \otimes_\kk H^j_{\m_j}(S_j) & \text{ if } 0 \leq j \leq t-1  \\
H^{j-t}_{\m_{n-t}}(\NNN_{n-t}) \otimes_\kk H^t_{\m_t}(S_t) & \text{ if } t \leq j \leq n
\end{cases}
\]
\end{proposition}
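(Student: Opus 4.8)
The plan is to use the family $\{\NNN_j\}_{j=n-t}^n$ of Lemma \ref{lemma family N} together with the structural identity $\NNN_j/H^0_{\m_j}(\NNN_j) \cong \NNN_{j-1} \otimes_\kk \kk[x_j]$ to compute local cohomology one variable at a time, via the short exact sequences coming from multiplication by the (strictly) filter regular elements $x_n, x_{n-1}, \ldots, x_{n-t+1}$. The key input at each step is the short exact sequence for local cohomology in Lemma \ref{lemma positive Edepth ses}, applied in the form: if $N = \NNN_j/H^0_{\m_j}(\NNN_j) = \NNN_{j-1} \otimes_\kk \kk[x_j]$, then for all $i \geq 1$ there are graded short exact sequences
\[
\xymatrix{
0 \ar[r] & H^{i-1}_{\m_j}(N/x_j N) \ar[r] & H^i_{\m_j}(\NNN_j)(-1) \ar[r]^-{\cdot x_j} & H^i_{\m_j}(\NNN_j) \ar[r] & 0,
}
\]
while in degree $0$ one has $H^0_{\m_j}(\NNN_j) = \MM_j$ (up to the identification of Remark \ref{Remark N = M}) and $H^1_{\m_j}(\NNN_j)(-1) \to H^1_{\m_j}(\NNN_j)$ is surjective with kernel $H^0_{\m_{j-1}}(\NNN_{j-1}) \otimes_\kk \kk[x_j]$. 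Since $N/x_j N \cong \NNN_{j-1}$ and tensoring with $\kk[x_j]$ is exact, passing to Hilbert series turns multiplication by $x_j$ into multiplication by $(1-z)^{-1}$, which is exactly how the factor $H^\bullet_{\m}(\kk[x_j])$ enters.

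First I would treat the case $0 \leq j \leq t-1$. Here I induct on $j$. For $j=0$ the statement $H^0_\m(M) = \MM_0 = \MM_0 \otimes_\kk H^0_{\m_0}(S_0)$ is immediate since $H^0_{\m_0}(S_0) = \kk$ (here $S_0 = \kk$). For the inductive step I use that, by Remark \ref{Remark N = M}, $H^j_{\m_{n-j}}(\NNN_{n-j}) \cong \MM_j$ is a finite-dimensional graded $\kk$-vector space sitting in the bottom spot $H^0$ of $\NNN_{n-j}$; then repeatedly applying the short exact sequences above for the variables $x_{n-j+1}, \ldots, x_n$ (each of which is strictly filter regular on the relevant $\NNN_s$) and using that $\MM_j$ is already killed by all these variables, I get $H^j_\m(M) \cong \MM_j \otimes_\kk K$ where $K$ is the graded vector space obtained from $\kk$ by tensoring $j$ times with the corresponding $H^1_{\m}(\kk[x])$-type contributions; matching Hilbert series this is precisely $H^j_{\m_j}(S_j)$, since $H^j_{\m_j}(S_j)$ has Hilbert series $\big(\frac{z^{-1}}{1-z^{-1}}\big)^j$ up to sign conventions. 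For the range $t \leq j \leq n$, I again peel off the variables $x_n, \ldots, x_{n-t+1}$ one at a time: each application of the short exact sequence above shifts cohomological degree down by one and multiplies the Hilbert series by the $\kk[x]$-factor, and because these $t$ variables are strictly filter regular on the successive $\NNN_s$'s the connecting maps in the relevant degrees are the expected ones (the only $H^0$ terms that appear are the $\MM_s$, which contribute to the lower range already handled). After stripping all $t$ variables one is left with $H^{j-t}_{\m_{n-t}}(\NNN_{n-t})$ tensored with the accumulated factor, which has the Hilbert series of $H^t_{\m_t}(S_t)$.

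The main obstacle I expect is bookkeeping the graded $\kk$-vector space isomorphisms (as opposed to mere Hilbert-series equalities): one has to be careful that at each stage the short exact sequences of Lemma \ref{lemma positive Edepth ses} are available (this needs $\Edepth(\NNN_j) > 0$, which follows from the $\ZZ\times\ZZ^{j-(n-t)}$-grading and Example \ref{key example} applied to $\NNN_j$), and that the ``$\otimes_\kk \kk[x_j]$'' structure is genuinely compatible with the $\check{\rm C}$ech complex computing $H^\bullet_{\m_j}$, so that $H^i_{\m_j}(\NNN_{j-1} \otimes_\kk \kk[x_j])$ splits as a tensor product over $\kk$ of cohomologies of $\NNN_{j-1}$ and of $\kk[x_j]$ along the Künneth-type decomposition. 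Once that compatibility is pinned down, the two displayed cases follow by the two inductions sketched above, with the $H^j_{\m_j}(S_j)$ and $H^t_{\m_t}(S_t)$ factors emerging as the accumulated $\check{\rm C}$ech contributions of the stripped variables.
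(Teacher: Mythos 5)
Your proposal follows essentially the same route as the paper: reduce $M$ to the family $\{\NNN_j\}$ of Lemma~\ref{lemma family N}, then peel off the variables $x_n,\ldots,x_{n-t+1}$ one at a time using the structural identity $\NNN_j/H^0_{\m_j}(\NNN_j)\cong\NNN_{j-1}\otimes_\kk\kk[x_j]$, collapsing to a single surviving term because $\kk[x_j]$ has nonvanishing local cohomology only in degree~$1$. The paper phrases the peeling step directly as a K\"unneth isomorphism for local cohomology of a $\kk$-tensor product, which is exactly the ingredient you reach for at the end of your write-up. One thing you could have noticed to simplify your argument: the detour through the short exact sequences of Lemma~\ref{lemma positive Edepth ses} is in fact already enough on its own, because the proposition only asserts graded $\kk$-vector space isomorphisms, and a graded $\kk$-vector space is determined by its Hilbert function; since $H^i_{\m_j}(\NNN_j)$ vanishes in sufficiently high degrees, the short exact sequence pins down its Hilbert function uniquely from that of $H^{i-1}_{\m_{j-1}}(\NNN_{j-1})$, and this matches that of the tensor with $H^1_{(x_j)}(\kk[x_j])$. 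So your worry that the SES argument gives ``only Hilbert-series equalities'' is unfounded here, and there was no need to fall back on K\"unneth. Also a small typo: you wrote $H^j_{\m_{n-j}}(\NNN_{n-j})\cong\MM_j$ where, as you in fact indicate in the same sentence, Remark~\ref{Remark N = M} identifies $\MM_j$ with $H^0_{\m_{n-j}}(\NNN_{n-j})$.
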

\begin{proof}
First, observe that 
\[
\ds H^0_\m(M) = \MM_0 \cong \MM_0 \otimes_\kk \kk = \MM_0 \otimes_\kk H^0_{\m_0}(S_0),
\]
so the statement is trivially true for $j=0$. By the K{\"u}nneth formula for local cohomology, and by Lemma \ref{lemma family N}, for $j>0$ and $0 \leq s \leq \min\{j,t\}$ we have graded $\kk$-vector space isomorphisms: 
\begin{align*}
H^j_\m(M) & \cong 
H^j_\m(\NNN_{n-s}\otimes_\kk \kk[x_{n-s+1},\ldots,x_n]) \\
& \cong \bigoplus_i \left(H^{j-i}_{\m_{n-i}}(\NNN_{n-s}) \otimes_\kk H^i_{(x_{n-s+1},\ldots,x_n)}(\kk[x_{n-s+1},\ldots,x_n])\right) \\
&\cong H^{j-s}_{\m_{n-s}}(\NNN_{n-s}) \otimes_\kk H^s_{\m_s}(S_s),
\end{align*}
where we used that $\kk[x_{n-s+1},\ldots,x_n] \cong S_s$ and the only non-vanishing cohomology of $S_s$ occurs for $i=s$. In particular, for $j \geq t$ we have $H^j_\m(M) \cong H^{j-t}_{\m_{n-t}}(\NNN_{n-t}) \otimes_\kk H^t_{\m_t}(S_t)$, while for $j \leq t-1$ we have $H^j_\m(M) \cong H^0_{\m_{n-j}}(\NNN_{n-j}) \otimes_\kk H^j_{\m_j}(S_j)$, which by Remark \ref{Remark N = M} is isomorphic to $\MM_j \otimes_\kk H^j_{\m_j}(S_j)$.
\end{proof}

\begin{remark} \label{Remark modulo} In what follows, we will view $S_j = \kk[x_1,\ldots,x_j]$ as an $S$-module, identifying it with $S/(x_{j+1},\ldots,x_n)$. In this way, the two local cohomology modules $H^j_\m(S_j)$ and $H^j_{\m_j}(S_j)$ are isomorphic, and we will switch from one to the other without further justifications.
\end{remark}

\begin{remark} \label{Remark NN} Let $M$ be a finitely generated $S$-module of dimension $d$. Consider a graded Noether normalization $R$ of $S/\ann_S(M)$, and observe that $M$ is a finitely generated graded $R$-module. Since $R$ is isomorphic to a polynomial ring in $d$ many variables, and because local cohomology does not change when viewing $M$ as an $R$-module, we may view the local cohomology table of $M$ both as an $S$ and an $R$-module, with no distinctions. Moreover, we may always assume that $\kk$ is infinite without affecting considerations on the cone of local cohomology tables (see \cite[Lemma 2.2]{DSS}). Finally, if $M$ is not sequentially Cohen-Macaulay one can show that $\Edepth(M)$ is unaffected by viewing it as an $R$-module instead of an $S$-module. On the other hand, if $M$ is sequentially Cohen-Macaulay as an $S$-module, it is also sequentially Cohen-Macaulay as an $R$-module, with $\Edepth(M)=d$. 
\end{remark}

We now turn our attention to local cohomology tables. Let $S=\kk[x_1,\ldots,x_n]$, with the standard grading. Let $M$ be a finitely generated graded $\ZZ$-module. We consider the local cohomology table of $M$:
\[
[H^\bullet_\m(M)] = (h^i_j) \in {\rm Mat}_{n+1,\ZZ}(\ZZ)
\] 
where $h^i_j := \dim_\kk(H^i_\m(M)_j)$. We can then consider the cone spanned by the local cohomology tables, with non-negative rational coefficients:
\[
\QQ_{\geq 0} \cdot \{[H^\bullet_\m(M)] \mid M \text{ is a finitely generated } \ZZ\text{-graded } S\text{-module}\}.
\] 
A description of the extremal rays and supporting hyperplanes of this cone has been given in \cite{DSS} when restricting to local cohomology tables of modules of dimension at most two. 

Given a finitely generated $\ZZ$-graded module $M$, we will consider its (local cohomology) Hilbert series:
\[
\HS(H^\bullet_\m(M)) = \sum_{i=0}^n \sum_{j \in \ZZ} h^i_j u^iz^j \in \ZZ\ps{z^{\pm 1}}[u].
\]

If $N$ is a finitely generated $\ZZ$-graded $S_{n-j}$-module, then by the proof of Proposition \ref{proposition LC} we have 
\begin{equation}
\label{Eq Delta}
\HS(H^\bullet_\m(N \otimes_\kk \kk[x_{n-j+1},\ldots,x_n])) = \frac{u^j}{(z(1-z^{-1}))^j} \HS(H^\bullet_\m(N)) = \frac{u^j}{(z-1)^j} \HS(H^\bullet_\m(N)).
\end{equation}

We start by proving a decomposition theorem for $S$-modules satisfying $\Edepth(M) \geq \dim(S)-2$. By Remark \ref{Remark NN}, if $\Edepth(M) \geq \dim(M)-2$, we can still reduce to this case. 
In particular, since modules of dimension at most two 
automatically satisfy this condition, the following is an extension of \cite[Theorem 4.6]{DSS}.

\begin{theorem} \label{theorem decomposition LC Edepth}
Let $S=\kk[x_1,\ldots,x_n]$ be a standard graded polynomial ring, and $M$ be a $\ZZ$-graded $S$-module. Let $t=\Edepth(M)$, and assume that $t \geq n-2$. Let $J=(x_1,x_2)S$. 
We have a decomposition
\[
\ds [H^\bullet_\m(M)] = \sum_{i=0}^{n} \sum_{j \in \ZZ} r_{i,j} [H^\bullet_\m(S_i(-j))]  + \sum_{m>0} \sum_{j \in \ZZ}  r'_{m,j} [H^\bullet_\m(J^m(-j))], 
\]
where $r_{i,j} \in \ZZ_{\geq 0}$, $r'_{m,j} \in \QQ_{\geq 0}$, and all but finitely many of them are equal to zero. Moreover, the set 
\[
\ds \Lambda =\{[H^\bullet_\m(S_i(-j))], [H^\bullet_\m(J^m(-j))] \mid 0\leq i \leq n, j \in \ZZ, m >0\}
\] 
is minimal, that is, none of the tables from $\Lambda$ can be written as a non-negative rational linear combination of the other tables from the same set. 
\end{theorem}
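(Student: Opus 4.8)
The plan is to reduce the problem, via the reduction already carried out earlier in the paper, to the case $\Edepth(M) \geq n-2$ and then exploit Proposition \ref{proposition LC} together with the family $\{\NNN_j\}_{j=n-t}^n$ from Lemma \ref{lemma family N}. First I would replace $M$ by $M/H^0_\m(M)$ using Proposition \ref{proposition Edepth modulo filter regular element}(\ref{prop H^0}); the discrepancy is a $\kk$-vector space concentrated in cohomological degree $0$, which contributes a non-negative integer combination of the tables $[H^\bullet_\m(S_0(-j))] = [H^\bullet_\m(\kk(-j))]$. After a general change of coordinates I may assume $x_n,\ldots,x_{n-t+1}$ is a strictly filter regular sequence on $M$, so that $M$ (or at least $M/H^0_\m(M)$) fits the hypotheses of Lemma \ref{lemma family N}, yielding modules $\NNN_{n-t}, \ldots, \NNN_n$ over the subrings $S_{n-t},\ldots, S_n$, with $\NNN_{n-t}$ of dimension $\dim(M) - t \leq 2$ over $S_{n-t}$.

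The key structural input is formula \eqref{Eq Delta} in Proposition \ref{proposition LC}: the local cohomology Hilbert series of $M$ splits as a sum of (i) contributions $\HS(\MM_j \otimes_\kk H^j_{\m_j}(S_j))$ for $0 \leq j \leq t-1$, each of which is a finite non-negative integer combination of $\HS(H^\bullet_\m(S_j(-a)))$ since $\MM_j$ is a finite-dimensional graded $\kk$-vector space, plus (ii) the contribution $\frac{u^t}{(z-1)^t}\HS(H^\bullet_\m(\NNN_{n-t}))$. For the second piece I would invoke \cite[Theorem 4.6]{DSS} to write $[H^\bullet_\m(\NNN_{n-t})]$ — a table of a module of dimension at most two over $S_{n-t}$ — as a non-negative combination $\sum r_{i,j}[H^\bullet_\m((S_{n-t})_i(-j))] + \sum r'_{m,j}[H^\bullet_\m(J^m(-j))]$, with the ideal $J$ living in $S_{n-t} = \kk[x_1,x_2,\ldots]$, hence equal to $(x_1,x_2)$ as required. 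Multiplying through by $\frac{u^t}{(z-1)^t}$ and using \eqref{Eq Delta} in reverse turns $[H^\bullet_\m((S_{n-t})_i(-j))]$ into $[H^\bullet_\m(S_{i}(-j))]$ (after tensoring with the polynomial ring in the last $t$ variables) and turns $[H^\bullet_\m(J^m(-j))]$ over $S_{n-t}$ into $[H^\bullet_\m(J^m(-j))]$ over $S$. Assembling the two pieces gives the claimed decomposition, with the $S_j$-coefficients from part (i) and the reindexed $S_i$-coefficients from part (ii) all in $\ZZ_{\geq 0}$, and the $J^m$-coefficients in $\QQ_{\geq 0}$; finiteness is automatic since $M$ is finitely generated.

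For the minimality of $\Lambda$, the plan is to argue that no element of $\Lambda$ lies in the cone generated by the others. I would use the supporting-hyperplane / facet equations available in dimension at most two from \cite[Theorem 6.2]{DSS}: each table $[H^\bullet_\m(S_i(-j))]$ and $[H^\bullet_\m(J^m(-j))]$ was shown there to be extremal in the relevant two-dimensional cone, and extremality is detected by functionals that vanish on it but are non-negative on the whole cone. The point is that applying $\frac{u^t}{(z-1)^t}$ (equivalently, the iterated difference operator $\Delta$ dual to tensoring with a polynomial ring) is an invertible linear operation that carries these separating functionals to separating functionals for the larger cone, and the finitely-many "pure $\kk$-vector space" tables $[H^\bullet_\m(S_j(-a))]$ with $j < t$ are linearly independent from the rest by inspection of which rows of the table are nonzero (namely row $j$ only, in a single degree). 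The main obstacle I anticipate is precisely this last bookkeeping: making sure that the reindexing $(S_{n-t})_i \rightsquigarrow S_i$ and $J \rightsquigarrow J$ is compatible across all the subrings, that the two families of $S_j$-tables coming from parts (i) and (ii) do not secretly overlap or cancel in a way that breaks minimality, and that the facet functionals of \cite[Theorem 6.2]{DSS} genuinely pull back to valid supporting hyperplanes of the $\Edepth \geq n-2$ cone rather than merely of its image under $\Delta$. I expect this to require a careful separate treatment of the "degree-zero slice" (handled directly) versus the "stable part" governed by the dimension-two theory, exactly as in the proof strategy of \cite[Theorem 4.6]{DSS}.
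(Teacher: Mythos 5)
Your overall plan is the right one and lines up closely with the paper's argument: reduce to a module carrying a $\ZZ \times \ZZ^{n-2}$-grading, use Lemma \ref{lemma family N} and Proposition \ref{proposition LC} to peel off the low cohomological degrees as $\kk$-vector-space multiples of $[H^\bullet_\m(S_i(-j))]$, reduce the remaining three rows to the two-dimensional module $\NNN_2$ over $S_2$, invoke \cite[Theorem 4.6]{DSS}, and lift back via the relation \eqref{Eq Delta} on Hilbert series.

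However, there is a genuine gap in how you reach the situation where Lemma \ref{lemma family N} and Proposition \ref{proposition LC} apply. Both of those results require $M$ to carry a $\ZZ \times \ZZ^t$-grading, not merely to admit a filter (or strictly filter) regular sequence of coordinate variables. A general linear change of coordinates applied to a $\ZZ$-graded module produces another $\ZZ$-graded module; it does \emph{not} make the last $t$ variables act multigradedly. The step you are missing --- and the place where the hypothesis $\Edepth(M) \geq n-2$ actually does its work --- is Theorem \ref{theorem gin}: one replaces $F/U$ by the partial general initial module $F/\gin_{\rev{n-2}}(U)$, which by construction is $\ZZ \times \ZZ^{n-2}$-graded, has $x_n,\ldots,x_3$ as a filter regular sequence (Proposition \ref{gin has Edepth} and Remark \ref{Remark gin}), and, crucially, has the \emph{same} Hilbert functions of local cohomology as $F/U$ precisely because $\Edepth(F/U) \geq n-2$. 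Without that replacement, the isomorphisms $H^i_\m(M) \cong \MM_i \otimes_\kk H^i_\m(S_i)$ and the description of $\NNN_2$ are not available for the original $M$; your passage to $M/H^0_\m(M)$ plus a change of coordinates does not supply the multigraded structure those results need. Once Theorem \ref{theorem gin} is inserted, the remainder of your plan (including the lift of the $\m_2^m$ tables to $J^m = \m_2^m \otimes_\kk \kk[x_3,\ldots,x_n]$, and the $\Delta$-based reduction of minimality to the two-dimensional case) matches the paper's argument.
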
 
\begin{proof}
We may assume that $n \geq 2$, otherwise the result follows from \cite[Theorem 4.6]{DSS}. Since $t \geq n-2$, by Theorem \ref{theorem gin} we can replace $M$ by a partial general initial module $F/\gin_{\rev{n-2}}(U)$ which is $\ZZ \times \ZZ^{n-2}$-graded, and is such that $x_n,\ldots,x_3$ forms a filter regular sequence. Let $h^i_j = \dim_\kk(H^i_\m(M)_j)$. By Proposition \ref{proposition LC}, for all $0 \leq i \leq n-3$ 
we have graded $\kk$-vector space isomorphisms $H^i_\m(M) \cong \MM_i \otimes_\kk H^i_{\m}(S_i) \cong \bigoplus_{j \in \ZZ} H^i_\m(S_i(-j))^{\oplus r_{i,j}}$, where $r_{i,j} = \dim_\kk\{m \in \MM_i \mid \deg(m) = j\}$. Thus
\begin{equation} \label{Eq senza N2}
\sum_{i=0}^{n-3} \sum_{j \in \ZZ} h^i_j u^iz^j = \sum_{i=0}^{n-3} \sum_{j \in \ZZ} r_{i,j} \HS(H^\bullet_\m(S_i(-j))).
\end{equation}
On the other hand, for $n-2 \leq i \leq n$ we have $H^i_\m(M) \cong H^{i-(n-2)}_{\m_2}(\NNN_2) \otimes_\kk H^{n-2}_{\m_{n-2}}(S_{n-2})$. Therefore by (\ref{Eq Delta}) we have 
\[
\sum_{i=n-2}^n \sum_{j \in \ZZ} h^i_j u^iz^j = \frac{u^{n-2}}{(z-1)^{n-2}} \HS(H^\bullet_{\m_2}(\NNN_2)).
\]
If $\NNN_2 = 0$, then the proof is complete, since (\ref{Eq senza N2}) gives the desired decomposition. If $\NNN_2 \ne 0$, then it is a finitely generated $\ZZ$-graded $S_2$-module. It follows from \cite[Theorem 4.6]{DSS} that 
\begin{align*}
\HS(H^\bullet_{\m_2}(\NNN_2)) & = \sum_{i=0}^2 \sum_{j \in \ZZ} s_{i,j} \HS(H^\bullet_{\m_2}(S_i(-j))) + \sum_{m>0} \sum_{j \in \ZZ} s'_{m,j} \HS(H^\bullet_{\m_2}(\m_2^m(-j))),
\end{align*}
where $s_{i,j} \in \ZZ_{\geq 0}$ 
and $s_{m,j}' \in \QQ_{\geq 0}$ are all but finitely many equal to zero.

Applying (\ref{Eq Delta}) to the module $J^m \cong \m_2^m \otimes_\kk \kk[x_3,\ldots,x_n]$, we obtain the following relation:
\[
\HS(H^\bullet_\m(J^m)) = \frac{u^{n-2}}{(z-1)^{n-2}}\HS(H^\bullet_{\m_2}(\m_2^m)).
\]
Similarly, for $i=0,1,2$ we get $\displaystyle \HS(H^\bullet_\m(S_{i+(n-2)})) = \frac{u^{n-2}}{(z-1)^{n-2}}\HS(H^\bullet_{\m_2}(S_i)) $. 

Putting these relations together 
we see that:
\begin{equation}
\label{Eq N2}
\sum_{i=n-2}^n \sum_{j \in \ZZ} h^i_j u^iz^j = \sum_{i=n-2}^n \sum_{j \in \ZZ} r_{i,j} \HS(H^\bullet_{\m}(S_i(-j))) + \sum_{m>0} \sum_{j \in \ZZ} r'_{m,j} \HS(H^\bullet_{\m}(J^m(-j))),
\end{equation}
where $r_{i,j} = s_{i-(n-2),j}$ and $r'_{m,j} = s'_{m,j}$.
Finally, summing (\ref{Eq senza N2}) and (\ref{Eq N2}) up and passing to local cohomology tables gives the desired decomposition:
\[
[H^\bullet_\m(M)] = \sum_{i=0}^n \sum_{j \in \ZZ} r_{i,j} [H^\bullet_\m(S_i(-j))] + \sum_{m>0} \sum_{j \in \ZZ}r_{m,j}' [H^\bullet_\m(J^m(-j))].
\]
For minimality, the strategy of the proof is completely analogous to that of Theorem \cite[Theorem 4.6]{DSS}, combined with the use of (\ref{Eq Delta}) as above.\end{proof} 
As a corollary of the proof, we obtain a very special decomposition for local cohomology tables in the sequentially Cohen-Macaulay case.
\begin{corollary} \label{Corollary LC decomposition SCM}
Let $S=\kk[x_1,\ldots,x_n]$ with the standard grading, and $M$ be a finitely generated $\ZZ$-graded $S$-module of dimension $d$. Assume that $M$ is sequentially Cohen-Macaulay. Then
\[
[H^\bullet_\m(M)] = \sum_{i=0}^d \sum_{j \in \ZZ} r_{i,j} [H^\bullet_\m(S_i(-j))],
\]
where $r_{i,j} \in \ZZ_{\geq 0}$, and all but finitely many of them are zero.
\end{corollary}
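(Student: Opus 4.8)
The plan is to run the argument of Theorem \ref{theorem decomposition LC Edepth} but to track more carefully what happens in the sequentially Cohen-Macaulay case, where the ``bad'' summands $[H^\bullet_\m(J^m(-j))]$ should not appear. Since $M$ is sequentially Cohen-Macaulay, Proposition \ref{Proposition SCM and Edepth} gives $\Edepth(M) = n \geq n - 2$, so Theorem \ref{theorem decomposition LC Edepth} already applies and produces a decomposition with $r_{i,j} \in \ZZ_{\geq 0}$ and $r'_{m,j} \in \QQ_{\geq 0}$. The point is to show that all $r'_{m,j}$ vanish and that only $S_i(-j)$ with $i \leq d = \dim(M)$ occur. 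First I would, by Remark \ref{Remark NN}, reduce to the case $n = d$, viewing $M$ as a finitely generated module over a Noether normalization; this keeps $M$ sequentially Cohen-Macaulay and does not change its local cohomology table. By Corollary \ref{gin SCM} (applied with the full revlex order), or equivalently by running the proof of Theorem \ref{theorem decomposition LC Edepth} with $t = n$ in place of $t = n-2$, we may replace $M$ by the $\ZZ \times \ZZ^n$-graded quotient $F/\gin_{\rm revlex}(U)$, for which $x_n, \ldots, x_1$ is a filter regular sequence and which has the same local cohomology table.

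Next I would invoke Proposition \ref{proposition LC} with $t = n$: for every $j$ with $0 \leq j \leq n-1$ we get a graded $\kk$-vector space isomorphism
\[
H^j_\m(M) \cong \MM_j \otimes_\kk H^j_{\m_j}(S_j),
\]
and the only remaining case $j = n$ is governed by $H^n_{\m_n}(\NNN_0) \otimes_\kk H^n_{\m_n}(S_n)$ where $\NNN_0$ is a finite-dimensional $\kk$-vector space (a module over $S_0 = \kk$), hence $H^n_\m(M) \cong \NNN_0 \otimes_\kk H^n_{\m_n}(S_n)$ as well. Writing $r_{j,a} = \dim_\kk (\MM_j)_a$ for $j < n$ and $r_{n,a} = \dim_\kk(\NNN_0)_a$, and using the isomorphism $H^j_{\m_j}(S_j(-a)) \cong H^j_{\m_j}(S_j)(-a)$ together with Remark \ref{Remark modulo}, this says exactly
\[
[H^\bullet_\m(M)] = \sum_{i=0}^n \sum_{a \in \ZZ} r_{i,a} [H^\bullet_\m(S_i(-a))],
\]
with $r_{i,a} \in \ZZ_{\geq 0}$ and all but finitely many zero; no $[H^\bullet_\m(J^m(-a))]$ terms occur because, for $t = n$, the residual module $\NNN_{n-t} = \NNN_0$ is just a $\kk$-vector space rather than a genuine two-dimensional $S_2$-module, so the appeal to \cite[Theorem 4.6]{DSS} in the proof of Theorem \ref{theorem decomposition LC Edepth} is replaced by a triviality. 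Finally, to see that only $i \leq d$ contribute, note that $\dim(S_i) = i$, and $H^i_\m(S_i(-a)) \neq 0$, so a nonzero $r_{i,a}$ with $i > d$ would force $H^i_\m(M) \neq 0$, contradicting $\dim(M) = d$ (local cohomology vanishes above the dimension); hence $r_{i,a} = 0$ for $i > d$, and the sum may be truncated at $i = d$.

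The only real subtlety is making sure that running the proof of Theorem \ref{theorem decomposition LC Edepth} with $t = n$ is legitimate: one needs $\gin_{\rm revlex}$ to be the initial module with respect to $\rev{n}$ (noted in the excerpt right before Corollary \ref{gin SCM}), that $x_n, \ldots, x_1$ is then a filter regular — hence, by Example \ref{key example}, a strictly filter regular — sequence for $F/\gin_{\rm revlex}(U)$, and that Proposition \ref{proposition LC} is valid for $t = n$; the Künneth computation there goes through verbatim since $S_n = \kk[x_1,\ldots,x_n]$ and the base ring $S_0 = \kk$ presents no difficulty. I expect this bookkeeping to be the main (minor) obstacle; everything else is a direct specialization of the machinery already developed.
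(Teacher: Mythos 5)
Your proposal is correct and follows essentially the same route as the paper's proof: reduce to $n=d$ via Remark \ref{Remark NN}, replace $M$ by its general initial module via Corollary \ref{gin SCM}, and read off the decomposition from Proposition \ref{proposition LC}. The only slip is a typo in the $j=n$ case: Proposition \ref{proposition LC} with $t=n$ gives $H^n_\m(M) \cong H^{0}_{\m_0}(\NNN_0) \otimes_\kk H^n_{\m_n}(S_n)$, not ``$H^n_{\m_n}(\NNN_0) \otimes_\kk H^n_{\m_n}(S_n)$''; since $\NNN_0$ is a finite-dimensional graded $\kk$-vector space you recover $H^n_\m(M)\cong \NNN_0\otimes_\kk H^n_\m(S_n)$ anyway, which is what you wrote next. (For the record, the paper takes the slightly smaller $t=d-1$ in Proposition \ref{proposition LC} and is terser about the top two cohomological degrees; your choice $t=n$ makes the endpoint $j=n$ explicit, which is a small clarity gain over the published text.)
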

\begin{proof}
By Remark \ref{Remark NN}, we may assume that $n=d$. By Corollary \ref{gin SCM}  (or \cite[Theorem 3.1]{HerzogSbarra}), we can replace $M$ by its general initial module, and assume that $M$ is $\ZZ \times \ZZ^{d-1}$-graded, and that $x_d, x_{d-1},\ldots,x_1$ forms a filter regular sequence on $M$. By Proposition \ref{proposition LC}, for all $i=0,\ldots,d$ we have graded $\kk$-vector spaces $\MM_i$ and graded isomorphisms $H^i_\m(M) \cong 
\MM_i \otimes_\kk H^i_{\m}(S_i) \cong \bigoplus_{j \in \ZZ} H^i_{\m}(S_i(-j))^{\oplus r_{i,j}}$, 
where
$r_{i,j} = \dim_\kk\{m \in \MM_i \mid \deg(m)=j\}$, and the proof is complete.
\end{proof}

\subsection{Supporting hyperplanes} \label{Subsection hyperplanes}

The goal is to provide a description of the supporting hyperplanes of the cone of local cohomology tables of modules with sufficiently large $\Edepth$. As done in \cite{DSS}, to do so we must reduce to ``finitely supported'' local cohomology tables. We will now explain this process.

We start by recalling a notation introduced in \cite[Notation 4.1]{DSS}.

\begin{notation}
Let $N = \bigoplus_{j\in \ZZ}N_j$ 
be a $\ZZ$-graded $\kk$-vector space that satisfies $\dim_\kk(N_j) < \infty$ for all $j \in \ZZ$. For $t \geq 0$ we define a row vector $\Delta^t(N)$ inductively as follows. If $t=0$ then we set $\Delta^0(N) = \HF(N)$. If $t>0$, we define $\Delta^t(N)$ to be the vector whose $j$-th entry is $\Delta^t(N)_{j} = \Delta^{t-1}(N)_{j} - \Delta^{t-1}(N)_{j+1}$.
\end{notation}

Let $S=\kk[x_1,\ldots,x_n]$, with the standard grading. Recall that the local cohomology table of a finitely generated $\ZZ$-graded $S$-module is a point in the $\QQ$-vector space of $(n+1) \times \mathbb Z$-matrices ${\rm Mat}_{n+1,\ZZ}(\QQ)$. We now define an operator $\Delta(-)$, which takes a local cohomology table
$[H^\bullet_\m(M)]$ and transforms it into a new table $\Delta[H^\bullet_\m(M)]$ as follows:
\[
[H^\bullet_\m(M)] = \begin{bmatrix} \HF(H^0_\m(M)) \\ \HF(H^1_\m(M)) \\ \vdots \\  \HF(H^n_\m(M))\end{bmatrix} \ \mapsto \ \Delta[H^\bullet_\m(M)] = 
\begin{bmatrix} \Delta^0(H^0_\m(M)) \\ \Delta^1(H^1_\m(M)) \\ \vdots \\  \Delta^n(H^n_\m(M))\end{bmatrix}.
\]

It is important to observe that, even though a local cohomology table typically has infinitely many non-zero entries, the table $\Delta[H^\bullet_\m(M)]$ only has finitely many non-zero entries. This follows from the fact that the functions $\HF(H^i_\m(M)): j \mapsto \dim_\kk(H^i_\m(M)_j) = \dim_\kk(\Ext^{n-i}_S(M,S(-n))_{-j})$ coincide with polynomials of degree at most $i-1$ for $j \ll 0$.

From the point of view of Hilbert series, observe that this process corresponds to the transformation:
\[
\HS(H^\bullet_\m(M))=\sum_{i=0}^n\sum_{j \in \ZZ} h^i_j u^iz^j \mapsto \DHS(H^\bullet_\m(M)) = \sum_{i=0}^n \sum_{j \in \ZZ} h^i_j u^i z^j(1-z^{-1})^i,
\]
and from the fact that $\Delta[H^\bullet_\m(M)]$ has finite support we deduce that $\DHS(H^\bullet_\m(M))$ is a Laurent polynomial in $\ZZ[z^{\pm 1},u]$. Since local cohomology modules are zero in sufficiently high degrees, it can be checked that, given two modules $M$ and $N$, $\HS(H^\bullet_\m(M)) = \HS(H^\bullet_\m(N))$ if and only if $\DHS(H^\bullet_\m(M)) = \DHS(H^\bullet_\m(N))$. In the rest of the section, we will often switch from the point of view of tables to that of Hilbert series, and vice versa.

For convenience, we will adopt the convention that the first row of a matrix $A=(a_{i,j})  \in {\rm Mat}_{n+1,\ZZ}(\QQ)$ corresponds to the index $i=0$, the second row to the index $i=1$, and so on. This makes the association between the rows of $A$ and invariants of local cohomology modules with a given cohomological index more natural.


Let $\mathbb M$ be the $\QQ$-vector subspace of ${\rm Mat}_{n+1,\ZZ}(\QQ)$ generated by tables with finitely many non-zero entries. Then $\mathbb M$ can be filtered as $\mathbb M = \bigcup_{a<b} \mathbb M_{[a,b]}$, where
\[
\mathbb M_{[a,b]} := \left\{A  \in \mathbb M \mid a_{i,j} = 0 \text{ whenever } j < a, j > b \right\}.
\]
Let $\mathcal C_{[a,b]}$ be the cone spanned by tables $\Delta[H^\bullet_\m(M)]$ that are contained in $\mathbb M_{[a,b]}$, where $M$ is a $\ZZ$-graded $S$-module such that $\Edepth(M) \geq n-2$. Let $\mathcal C^{\rm seq}_{[a,b]}$ be the cone spanned by tables $\Delta[H^\bullet_\m(M)]$ that are contained in $\mathbb M_{[a,b]}$, where $M$ is a sequentially Cohen-Macaulay $\ZZ$-graded $S$-module. Clearly, $\mathcal C^{\rm seq}_{[a,b]} \subseteq \mathcal C_{[a,b]}$. We start with a description of the smaller cone.

\begin{proposition} \label{hyperplanes SCM}
For integers $a<b$, consider the cone defined by $\mathcal D^{{\rm seq}}_{[a,b]} = \{A \in \mathbb{M}_{[a,b]} \mid a_{i,j} \geq 0 \text{ for all } 0 \leq i \leq n \text{ and } j \in \ZZ\}$. Then $\mathcal{C}^{{\rm seq}}_{[a,b]} = \mathcal D^{{\rm seq}}_{[a,b]}$.
\end{proposition}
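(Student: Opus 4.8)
The plan is to establish the two inclusions $\mathcal{C}^{\rm seq}_{[a,b]}\subseteq \mathcal{D}^{\rm seq}_{[a,b]}$ and $\mathcal{D}^{\rm seq}_{[a,b]}\subseteq \mathcal{C}^{\rm seq}_{[a,b]}$ separately, and the common ingredient for both is an explicit description of the finitely supported tables $\Delta[H^\bullet_\m(S_i(-j))]$. First I would record that, for $0\le i\le n$ and $j\in\ZZ$, the matrix $\Delta[H^\bullet_\m(S_i(-j))]$ is the elementary matrix $E_{i,\,j-i}$, i.e.\ the one with a single nonzero entry, equal to $1$, in the row indexed by $i$ and the column indexed by $j-i$, and zero elsewhere. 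Indeed, by $(\ref{Eq Delta})$ applied with $N=\kk$ one gets $\HS(H^\bullet_\m(S_i))=u^i/(z-1)^i$, hence $\HS(H^\bullet_\m(S_i(-j)))=u^iz^j/(z-1)^i$; applying the $\Delta$-transformation, which multiplies the coefficient of $u^i$ by $(1-z^{-1})^i$, gives $\DHS(H^\bullet_\m(S_i(-j)))=u^iz^j(1-z^{-1})^i/(z-1)^i=u^iz^{\,j-i}$, which translates into the table $E_{i,\,j-i}$. In particular, as $(i,j)$ ranges over $\{0,\dots,n\}\times\ZZ$, the pair $(i,j-i)$ ranges over $\{0,\dots,n\}\times\ZZ$, so these tables are precisely the standard basis matrices of the space $\mathbb{M}$ of finitely supported tables.

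For the inclusion $\mathcal{C}^{\rm seq}_{[a,b]}\subseteq\mathcal{D}^{\rm seq}_{[a,b]}$, let $M$ be a sequentially Cohen--Macaulay $S$-module with $\Delta[H^\bullet_\m(M)]\in\mathbb{M}_{[a,b]}$. By Corollary \ref{Corollary LC decomposition SCM} we may write $[H^\bullet_\m(M)]=\sum_{i,j}r_{i,j}[H^\bullet_\m(S_i(-j))]$ with $r_{i,j}\in\ZZ_{\ge 0}$, almost all zero; since $\Delta$ is linear on tables, $\Delta[H^\bullet_\m(M)]=\sum_{i,j}r_{i,j}E_{i,\,j-i}$, which has nonnegative entries, so $\Delta[H^\bullet_\m(M)]\in\mathcal{D}^{\rm seq}_{[a,b]}$. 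As $\mathcal{D}^{\rm seq}_{[a,b]}$ is a cone, the inclusion follows. For the reverse inclusion, let $A=(a_{i,k})\in\mathcal{D}^{\rm seq}_{[a,b]}$, so $a_{i,k}\in\QQ_{\ge 0}$ and $a_{i,k}=0$ whenever $k<a$ or $k>b$. Then
\[
A=\sum_{i=0}^{n}\sum_{k=a}^{b}a_{i,k}\,E_{i,k}=\sum_{i=0}^{n}\sum_{k=a}^{b}a_{i,k}\,\Delta\big[H^\bullet_\m\big(S_i(-(k+i))\big)\big],
\]
and each $S_i(-(k+i))$ is Cohen--Macaulay, hence sequentially Cohen--Macaulay, with $\Delta$-table $E_{i,k}\in\mathbb{M}_{[a,b]}$. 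Therefore $A$ is a $\QQ_{\ge 0}$-combination of $\Delta$-tables of sequentially Cohen--Macaulay modules all lying in $\mathbb{M}_{[a,b]}$, i.e.\ $A\in\mathcal{C}^{\rm seq}_{[a,b]}$.

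I do not expect a genuine obstacle here: once the computation of $\Delta[H^\bullet_\m(S_i(-j))]$ is in place, everything reduces to linear algebra over $\QQ$. The only points requiring care are bookkeeping ones — getting the column shift right (it is $j-i$, not $j$ or $j+i$), and checking that the modules $S_i(-(k+i))$ chosen to realize $E_{i,k}$ indeed have $\Delta$-table supported in the strip $[a,b]$, which holds precisely because $a\le k\le b$. One could alternatively quote the shape of the tables $[H^\bullet_\m(S_i(-j))]$ from \cite{DSS}, where they already appear as extremal rays.
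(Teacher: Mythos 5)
Your proof is correct and follows essentially the same route as the paper's: decompose via Corollary \ref{Corollary LC decomposition SCM} for one inclusion, and realize an arbitrary nonnegative finitely supported table as a sum of $\Delta$-tables of shifts of the $S_i$'s for the other. The only cosmetic difference is that you make the identity $\Delta[H^\bullet_\m(S_i(-j))]=E_{i,\,j-i}$ fully explicit in matrix form, whereas the paper performs the same computation at the level of Hilbert series (obtaining $\DHS(H^\bullet_\m(S_i(-i)))=u^i$ and then shifting); both give the same decomposition $H=\sum r_{i,j}\,\Delta[H^\bullet_\m(S_i(-j-i))]$.
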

\begin{proof}
Let $H \in \mathcal{C}^{{\rm seq}}_{[a,b]}$ be a table, that we may assume being equal to $\Delta[H^\bullet_\m(M)]$ for some sequentially Cohen-Macaulay graded $S$-module $M$. 
By Corollary \ref{Corollary LC decomposition SCM} we have that $[H^\bullet_\m(M)]$ can be written as a sum, with positive coefficients and shifts, of tables of the form $[H^\bullet_\m(S_i)]$. It is easy to see that, for all $i$, the table $\Delta[H^\bullet_\m(S_i)]$ has non negative entries, and thus $H \in \mathcal D^{\rm seq}_{[a,b]}$.

Conversely, let $H\in \mathcal D^{\rm seq}_{[a,b]}$. We can represent $H$ as a Laurent polynomial, namely $P(H)=\sum_{i=0}^n \sum_{j \in \ZZ} r_{i,j} u^i z^j$, where $r_{i,j}$ denotes the entry of $H$ in position $(i,j)$. Observe that $r_{i,j}\geq 0$ by assumption. By (\ref{Eq Delta}) we have that $\HS(H^\bullet_\m(S_i(-i)))  = z^i\HS(H^\bullet_\m(S_i)) = \frac{(uz)^i}{(z-1)^i}\HS(H^\bullet_\m(\kk)) = \frac{u^i}{(1-z^{-1})^i}$, and thus $\DHS(H^\bullet_\m(S_i(-i))) = u^i$. Therefore
\[
P(H) = \sum_{i=0}^n \sum_{j \in \ZZ} r_{i,j} u^iz^j =\sum_{i=0}^n\sum_{j \in \ZZ} r_{i,j} z^j\DHS(H^\bullet_\m(S_i(-i))) = \sum_{i=0}^n\sum_{j \in \ZZ} r_{i,j} \DHS(H^\bullet_\m(S_i(-j-i))).
\]
In the language of tables, this means that $H = \sum_{i=0}^n \sum_{j\in \ZZ} r_{i,j} \Delta[H^\bullet_\m(S_i(-j-i))]$. Since each module $S_i(-j-i)$ is sequentially Cohen-Macaulay, it follows that $\Delta[H^\bullet_\m(S_i(-j-i))] \in \mathcal C^{\rm seq}_{[a,b]}$ whenever $r_{i,j} \ne 0$. Thus, $H \in \mathcal C^{\rm seq}_{[a,b]}$.
\end{proof}

To describe the cone $\mathcal C_{[a,b]}$ we need to introduce more functionals, which come from \cite{DSS}.
\begin{definition} \label{defn hyperplanes} Let $A = (a_{i,j}) \in \mathbb M$. For $j \in \ZZ$ we set 
\[
\tau_j(A)= a_{n-1,j} + \sum_{s \leq j-1} a_{n,s}.
\]
Given an integer $m \geq 0$ and $j \in \ZZ$, we set
\[
\ds \pi_{m,j}(A)= \sum_{s > j+m} a_{n-1,s} + (m+1)a_{n-1,j+m} + \sum_{s=0}^{m-1} (s+1)a_{n,j+s}.
\] 
Finally, given an integer $0 \leq i \leq n$ and $j \in \ZZ$, we let $\mu_j^{(i)} = a_{i,j}$.
\end{definition}

Let $a<b$ be integers, and consider the following list of functionals on $\mathbb{M}$: 
\[
\ds \mathcal H_{[a,b]}= \left\{ \begin{array}{ll}
\mu^{(i)}_j & \text{ for } a \leq j \leq b, 0 < i \leq n-2 \text{ and } i=0,n,\\ \\
\tau_{j}   & \text{ for } a \leq j < b, \\ \\
\pi_{0,j} & \text{ for } a+1 \leq j \leq b, \\ \\
\pi_{m,j} & \text{ for } a+1 \leq m \leq b - 2,  a+1 \leq j < b - m
\end{array}\right\}.
\]

\begin{remark} \label{Remark 2-dim hyperplanes} Observe that the functionals $\tau_j, \pi_{0,j}, \pi_{m,j}$ are precisely those appearing in \cite[Theorem 6.2]{DSS}, with the only difference that the variables $a_{1,j}$ are here replaced by $a_{n-1,j}$ and the variables $a_{2,j}$ are replaced by $a_{n,j}$. Roughly speaking, if a matrix $A \in \mathbb M_{[a,b]}$ satisfies $\tau_j(A) \geq 0, \pi_{0,j}(A) \geq 0, \pi_{m,j}(A) \geq 0$ and $\mu_j^{(i)}(A) \geq 0$ for $i=n-2, n$, then the submatrix corresponding to the last three rows of $A$ satisfies the inequalities given by the functionals from \cite[Theorem 6.2]{DSS}.
\end{remark}

\begin{theorem}\label{facet thm} For integers $a<b$, consider the cone $\mathcal D_{[a,b]} = \{A \in \mathbb M_{[a,b]} \mid \phi(A) \geq 0$ for all $\phi \in \mathcal{H}_{[a,b]}\}$. We have $\mathcal D_{[a,b]} = \mathcal C_{[a,b]}$.
\end{theorem}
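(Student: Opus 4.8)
The plan is to prove the two inclusions $\mathcal{C}_{[a,b]} \subseteq \mathcal{D}_{[a,b]}$ and $\mathcal{D}_{[a,b]} \subseteq \mathcal{C}_{[a,b]}$ separately, reducing the nontrivial analytic content to the two-dimensional case already treated in \cite[Theorem 6.2]{DSS}. For the easy inclusion $\mathcal{C}_{[a,b]} \subseteq \mathcal{D}_{[a,b]}$, take $H = \Delta[H^\bullet_\m(M)]$ with $\Edepth(M) \geq n-2$. By Theorem \ref{theorem decomposition LC Edepth} we can write $[H^\bullet_\m(M)]$ as a non-negative combination of the tables $[H^\bullet_\m(S_i(-j))]$ and $[H^\bullet_\m(J^m(-j))]$, so it suffices to check that each functional $\phi \in \mathcal{H}_{[a,b]}$ is non-negative on the corresponding $\Delta$-tables. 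The functionals $\mu^{(i)}_j$ (for $i=0,n$ and $0<i\leq n-2$) are non-negative on every $\Delta[H^\bullet_\m(S_i(-j))]$ since, as in the proof of Proposition \ref{hyperplanes SCM}, these tables have non-negative entries; and $\Delta[H^\bullet_\m(J^m(-j))]$ is supported only in rows $n-1$ and $n$, so the $\mu^{(i)}_j$ with $0<i\leq n-2$ vanish on it, while the $\mu^{(0)}_j,\mu^{(n)}_j$ entries are again non-negative (indeed $\mu^{(n)}$ on $\Delta[H^\bullet_\m(J^m(-j))]$ is zero). For $\tau_j,\pi_{0,j},\pi_{m,j}$, by Remark \ref{Remark 2-dim hyperplanes} these only see rows $n-2,n-1,n$; using formula (\ref{Eq Delta}), the last three rows of $\Delta[H^\bullet_\m(J^m(-j))]$ (resp. of $\Delta[H^\bullet_\m(S_{i+(n-2)}(-j))]$ for $i=0,1,2$) coincide with the two-dimensional tables $\Delta[H^\bullet_{\m_2}(\m_2^m(-j))]$ (resp. $\Delta[H^\bullet_{\m_2}(S_i(-j))]$), shifted appropriately, on which \cite[Theorem 6.2]{DSS} guarantees non-negativity. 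Hence $\phi(H) \geq 0$ for all $\phi \in \mathcal{H}_{[a,b]}$, giving the first inclusion.

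For the reverse inclusion $\mathcal{D}_{[a,b]} \subseteq \mathcal{C}_{[a,b]}$, let $A = (a_{i,j}) \in \mathbb{M}_{[a,b]}$ satisfy $\phi(A) \geq 0$ for all $\phi \in \mathcal{H}_{[a,b]}$. I would split $A$ into two pieces: the ``top part'' $A^{\rm top}$ consisting of rows $0,\dots,n-3$ together with rows $0$ and $n$ insofar as they are only constrained by the $\mu$-functionals, and the ``bottom part'' $A^{\rm bot}$ consisting of the last three rows. More precisely, rows $0$ through $n-3$ of $A$ have all entries $\geq 0$ by the constraints $\mu^{(i)}_j \geq 0$ ($0\leq i \leq n-3$), so as in Proposition \ref{hyperplanes SCM} each such row, say row $i$, is a non-negative combination $\sum_j a_{i,j}\Delta[H^\bullet_\m(S_i(-j-i))]$ of tables of sequentially Cohen-Macaulay modules $S_i(-j-i)$, each of which has $\Edepth = i \geq n-3 \geq n-2$ only when $i \geq n-2$ — so here one must be a little careful: $S_i$ for $i \leq n-3$ has $\Edepth(S_i)=i$, but it is still sequentially Cohen-Macaulay, hence of $\Edepth$ equal to its dimension, and Remark \ref{Remark NN} lets us regard its local cohomology table as arising from a module of $\Edepth \geq n-2$ over an appropriate polynomial ring, so $\Delta[H^\bullet_\m(S_i(-j-i))] \in \mathcal{C}_{[a,b]}$. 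The remaining content is the bottom three rows: the submatrix $B$ formed by rows $n-2,n-1,n$ of $A$ satisfies exactly the inequalities of \cite[Theorem 6.2]{DSS} (after the relabeling of Remark \ref{Remark 2-dim hyperplanes}), so $B = \Delta[H^\bullet_{\m_2}(N)]$ for some finitely generated $\ZZ$-graded $S_2$-module $N$, which moreover (by that theorem's decomposition) is a non-negative combination of the two-dimensional building blocks $\Delta[H^\bullet_{\m_2}(S_i(-j))]$ ($i\leq 2$) and $\Delta[H^\bullet_{\m_2}(\m_2^m(-j))]$. Using (\ref{Eq Delta}) in reverse — tensoring $N$ with $\kk[x_3,\dots,x_n]$ — each such block lifts to $\Delta[H^\bullet_\m(S_{i+(n-2)}(-j))]$ or $\Delta[H^\bullet_\m(J^m(-j))]$, all of which lie in $\mathcal{C}_{[a,b]}$ since the corresponding modules $S_{i+(n-2)}$ and $J^m$ have $\Edepth \geq n-2$ (for $J^m$, because $x_n,\dots,x_3$ is a filter regular sequence and $J^m$ is $\ZZ\times\ZZ^{n-2}$-graded, so Example \ref{key example} applies). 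Adding the contribution of the top rows and of the bottom three rows reconstitutes $A$ as an element of $\mathcal{C}_{[a,b]}$, provided the support condition $[a,b]$ is preserved, which it is by construction.

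The main obstacle I expect is the bookkeeping at the interface between the ``top'' rows and the ``bottom'' three rows: one must be sure that the functionals in $\mathcal{H}_{[a,b]}$ genuinely decouple into constraints on the top rows (only the $\mu^{(i)}$ with $0 < i \leq n-3$, together with $\mu^{(0)}$) and constraints on the bottom $3\times\ZZ$ submatrix (the $\tau_j,\pi_{0,j},\pi_{m,j}$ together with $\mu^{(n-2)},\mu^{(n)}$), with no hidden coupling — and, conversely, that the decomposition produced for the bottom rows via \cite[Theorem 6.2]{DSS} does not disturb rows $0,\dots,n-3$ when lifted back to dimension $n$. This requires checking that lifting a two-dimensional block by $-\otimes_\kk \kk[x_3,\dots,x_n]$, equivalently applying $u^{n-2}/(z-1)^{n-2}$ to Hilbert series, only populates rows $n-2,n-1,n$ of the resulting $\Delta$-table, which follows from the shape of formula (\ref{Eq Delta}) and the vanishing ranges of local cohomology. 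Once this separation of variables is established cleanly, both inclusions follow by assembling the pieces, and the theorem is proved.
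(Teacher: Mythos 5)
Your proposal is correct and follows essentially the same approach as the paper: the forward inclusion uses Theorem~\ref{theorem decomposition LC Edepth} to reduce to checking the functionals on the building blocks $\Delta[H^\bullet_\m(S_i(-j))]$ and $\Delta[H^\bullet_\m(J^m(-j))]$, with the latter reduced to dimension two via Remark~\ref{Remark 2-dim hyperplanes} and formula (\ref{Eq Delta}); the reverse inclusion splits the table into rows $0,\dots,n-3$ (handled via the $\mu^{(i)}_j$ constraints as in Proposition~\ref{hyperplanes SCM}) and rows $n-2,n-1,n$ (handled via \cite[Theorem 6.2]{DSS} and lifting by $-\otimes_\kk\kk[x_3,\dots,x_n]$). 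One small inaccuracy: your worry about $\Edepth(S_i)=i$ for $i\le n-3$ is based on a misconception. As an $S$-module, $S_i$ is Cohen--Macaulay, hence sequentially Cohen--Macaulay, so $\Edepth(S_i)=n$ by Proposition~\ref{Proposition SCM and Edepth}; the detour through Remark~\ref{Remark NN} is unnecessary, and indeed $\Delta[H^\bullet_\m(S_i(-j-i))]\in\mathcal C^{\rm seq}_{[a,b]}\subseteq\mathcal C_{[a,b]}$ directly. Also, when lifting the two-dimensional decomposition to dimension $n$, formula (\ref{Eq Delta}) introduces a factor $(uz^{-1})^{n-2}$ rather than $u^{n-2}$, so one must apply the degree shift $(2-n)$ (as the paper does with $N_s'=(N_s\otimes_\kk\kk[x_3,\dots,x_n])(2-n)$); this is pure bookkeeping and does not affect the structure of the argument, but should be stated.
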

\begin{proof}
If $n \leq 1$, then every finitely generated $S$-module is sequentially Cohen-Macaulay, and the result trivially follows from Proposition \ref{hyperplanes SCM}. If $n=2$, the result follows from \cite[Theorem 6.2]{DSS}. Henceforth, we will assume that $n \geq 3$.

Let $H \in \mathcal{C}_{[a,b]}$ be a table. As in Proposition \ref{hyperplanes SCM}, we may assume that $H=\Delta[H^\bullet_\m(M)]$, where $M$ is a graded $S$-module of Krull dimension $d \leq n$, and $\Edepth(M) \geq n-2$. 

Observe that, if $d<n$, then $\Edepth(M) \geq n-2 \geq d-1$, and thus $M$ is sequentially Cohen-Macaulay by Proposition \ref{Proposition SCM and Edepth}. It follows from Proposition \ref{hyperplanes SCM} that $H$ satisfies $\mu_j^{(i)}(H) \geq 0$ for all $a \leq j \leq b$ and $0 \leq i \leq n$. From this, it can easily be checked that $\phi(H) \geq 0$ for all $\phi \in \mathcal H_{[a,b]}$, that is, $H \in \mathcal D_{[a,b]}$.

Now assume that $d=n$. 
Apply the operator $\Delta(-)$ to a decomposition of $[H^\bullet_\m(M)]$ obtained from Theorem \ref{theorem decomposition LC Edepth}:
\[
\Delta[H^\bullet_\m(M)] =  \sum_{i=0}^n \sum_{j \in \ZZ} r_{i,j}\Delta[H^\bullet_{\m}(S_i(-j))] + \sum_{m > 0} \sum_{j \in \ZZ} r'_{j,m} \Delta[H^{\bullet}_\m(J^m(-j))],
\]
where $J=(x_{1},x_2)S$, the rational numbers $r_{i,j}, r_{j,m}'$ are non-negative, and only finitely many of them are not zero. Thus, it suffices to show that every single table appearing on the right-hand side of the equation satisfies the inequalities defining $\mathcal D_{[a,b]}$. For tables of the form $\Delta[H^\bullet_\m(S_i(-j))]$, the argument is the same as the one used above when $M$ is sequentially Cohen-Macaulay. For $\Delta[H^{\bullet}_\m(J^m(-j))]$, we pass through its Hilbert series $\DHS(H^\bullet_\m(J^m(-j)))$. Using (\ref{Eq Delta}), straightforward calculations give that 
\[
\DHS(H^\bullet_\m(J^m(-j))) = (uz^{-1})^{n-2} \DHS(H^\bullet_{\m_2}(\m_2^m(-j))) =  u^{n-2} \DHS(H^\bullet_{\m_2}(\m_2^m(-j+n-2))).
\] 
We conclude that the table $\Delta[H^{\bullet}_\m(J^m(-j))]$ satisfies the desired inequalities because the table $\Delta[H^\bullet_{\m_2} (\m_2^m(-j+n-2))]$ of the $S_2$-module $\m_2^m(-j+n-2)$ satisfies analogous functionals in dimension two, by \cite[Theorem 6.2]{DSS} (see Remark \ref{Remark 2-dim hyperplanes}).

Conversely, let $H \in \mathcal D_{[a,b]}$. We show that $H$ belongs to $\mathcal C_{[a,b]}$ by constructing the first $n-2$ rows as a first approximation, and then adding the last three. Let $r_{i,j}$ be the $(i,j)$-th entry of $H$, and consider the associated Laurent polynomial $P(H) = \sum_{i=0}^n\sum_{j \in \ZZ} r_{i,j} u^iz^j$. Since $\mu^{(i)}_j(H) \geq 0$ for all $0 \leq i \leq n-3$, as in Proposition \ref{hyperplanes SCM} we have that
\begin{equation}
\label{Eq prime righe}
\sum_{i=0}^{n-3}\sum_{j \in \ZZ} r_{i,j} u^iz^j = \sum_{i=0}^{n-3} \sum_{j \in \ZZ} r_{i,j} \DHS(H^\bullet_\m(S_i(-j-i))),
\end{equation}
with $r_{i,j} \geq 0$. As already observed in Proposition \ref{hyperplanes SCM}, we have that $\Delta[H^\bullet_\m(S_i(-j-i))] \in \mathcal C^{\rm seq}_{[a,b]} \subseteq \mathcal C_{[a,b]}$ whenever $r_{i,j} \ne 0$.
We now construct the last three rows of $H$. 
By Remark \ref{Remark 2-dim hyperplanes}, the submatrix $H' \in {\rm Mat}_{3,\ZZ}(\QQ)$ consisting of the last three rows of $H$ satisfies the inequalities given by the functionals from \cite[Theorem 6.2]{DSS}. As a consequence of such theorem, there exist 
finitely generated $\ZZ$-graded $S_2$-modules $N_1,\ldots,N_p$ and positive rational numbers $t_1,\ldots,t_p$ such that $H' = \sum_{s=1}^p t_s \Delta[H^\bullet_{\m_2}(N_s)]$. Let $P(H') = \sum_{i=0}^2 r_{i+n-2,j} u^iz^j$ denote the Laurent polynomial associated to $H'$. Passing to Hilbert series, the equation above gives that $P(H') = \sum_{s=1}^p t_s \DHS(H^\bullet_{\m_2}(N_s))$. 
For $s=1,\ldots p$, we let  $N_s' = \left(N_s \otimes_\kk \kk[x_3,\ldots,x_{n}]\right)(2-n)$, so that by (\ref{Eq Delta}) and with a straightforward calculation we get
\begin{equation}
\label{Eq ultime righe}
\sum_{s=1}^p t_s \DHS(H^\bullet_\m(N_s')) = u^{n-2} \sum_{s=1}^p t_s  \DHS(H^\bullet_{\m_2}(N_s)) = u^{n-2}P(H') = \sum_{i=n-2}^n \sum_{j \in \ZZ} r_{i,j}u^iz^j.
\end{equation}
Observe that $\Edepth(N_s') \geq n-2$, so that $\Delta[H^\bullet_\m(N'_s)] \in \mathcal C_{[a,b]}$ for all $s$. 
Putting (\ref{Eq prime righe}) and (\ref{Eq ultime righe}) together, from the point of view of tables we finally get
\[
H=  \sum_{i=0}^{n-3} \sum_{j \in \ZZ} r_{i,j}\Delta[H^\bullet_\m(S_i(-j-i))] + \sum_{s=1}^p t_s \Delta[H^\bullet_\m(N_s')] \in \mathcal C_{[a,b]}. \qedhere
\]
\end{proof}

\section{If the socle of local cohomology modules fits}
\label{Section soclelemma}
Let $S=\kk[x_1,\ldots,x_n]$ with the standard grading, and $\m=(x_1,\ldots,x_n)$. Let $M$ be a $\ZZ$-graded $S$-module. We denote by $\soc(M) = \ann_M(\m)$ the socle of $M$. In \cite{KustinUlrich}, Kustin and Ulrich prove that if $I \subseteq J$ are two homogeneous ideals such that $S/I$ is Artinian, and that satisfy the pointwise inequality $\HF(\soc(S/I)) \leq \HF(\soc(S/J))$, then $I=J$. The proof can easily be adapted to submodules $A \subseteq B$ of a fixed $S$-module $C$ such that $C/A$ is Artinian, and we include it here for the sake of completeness. 

\begin{lemma} \label{lemmainequalitysocles Artinian} Let $S=\kk[x_1,\ldots,x_n]$, and $C$ be a finitely generated graded $S$-module. Let $A \subseteq B$ be two graded submodules of $C$ such that $C/A$ is Artinian. If $\HF(\soc(C/A)) \leq \HF(\soc(C/B))$, then $A=B$.
\end{lemma}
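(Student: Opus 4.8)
The plan is to argue by contradiction, reducing everything to a single graded degree — the top degree of $B/A$ — where the socle functor becomes well behaved. Since $C$ is finitely generated and $C/A$ is Artinian, the submodule $B/A$ of $C/A$ has finite length; in particular every graded component $(B/A)_j$ is finite-dimensional over $\kk$, and it vanishes for $j \gg 0$. Assuming $A \neq B$, I would let $d$ be the largest integer with $(B/A)_d \neq 0$.

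The first step is to observe that $(B/A)_d$ sits inside the socle of $C/A$: for $v \in (B/A)_d$ and any variable $x_i$ one has $x_i v \in (B/A)_{d+1} = 0$, so $\m \cdot (B/A)_d = 0$ and hence $(B/A)_d \subseteq \soc(C/A)_d$. The second step is to pin down $\soc(C/B)_d$ exactly. Writing $D = C/A$, $E = B/A$, and $\overline{D} = C/B \cong D/E$, the surjection $D \twoheadrightarrow \overline{D}$ is an isomorphism in all degrees $> d$. The induced map $\soc(D) \to \soc(\overline{D})$ therefore has image $(\soc(D)_d + E_d)/E_d = \soc(D)_d/E_d$ in degree $d$ (using Step~1); conversely, any class in $\soc(\overline{D})_d$ lifts to some $v \in D_d$ with each $x_i v$ mapping to $0$ in $\overline{D}_{d+1} = D_{d+1}$, forcing $x_i v = 0$ in $D$ and so $v \in \soc(D)_d$. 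Thus $\soc(C/B)_d \cong \soc(C/A)_d / (B/A)_d$.

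Combining the two steps, $\dim_\kk \soc(C/B)_d = \dim_\kk \soc(C/A)_d - \dim_\kk (B/A)_d < \dim_\kk \soc(C/A)_d$ because $(B/A)_d \neq 0$, contradicting the hypothesis $\HF(\soc(C/A)) \leq \HF(\soc(C/B))$ in degree $d$. This forces $B/A = 0$, i.e. $A = B$.

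The only real subtlety — and the reason a naive approach fails — is that $\soc(-)$ is merely left exact, so the short exact sequence $0 \to B/A \to C/A \to C/B \to 0$ does not by itself yield a Hilbert-function comparison strong enough to conclude. The point of localizing at the top degree $d$ of $B/A$ is precisely that there $\m$ annihilates $(B/A)_d$, which simultaneously places $(B/A)_d$ inside $\soc(C/A)$ and makes the socle sequence right exact in that degree; after that the argument is a one-line dimension count. This is the module-theoretic transcription of the Kustin--Ulrich argument; alternatively one could dualize via Matlis duality and count minimal generators, but the direct degree argument appears cleanest.
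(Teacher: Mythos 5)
Your proof is correct, and it takes a genuinely different route from the paper's. You isolate the top degree $d$ of the finite-length module $B/A$ and show directly that in degree $d$ there is an exact sequence $0 \to (B/A)_d \to \soc(C/A)_d \to \soc(C/B)_d \to 0$: the left map exists because $\m$ annihilates $(B/A)_d$, and right-exactness holds because $C/A \to C/B$ is an isomorphism in degrees $> d$, which is exactly where the socle condition $x_i v = 0$ is tested. This gives a strict drop $\dim_\kk\soc(C/B)_d < \dim_\kk\soc(C/A)_d$ and an immediate contradiction. The paper instead first proves the stronger intermediate statement that the natural map $\soc(C/A) \to \soc(C/B)$ is \emph{injective}: it supposes not, takes the largest degree $j$ where injectivity fails, deduces from the Hilbert-function hypothesis that surjectivity also fails there, lifts a missed socle element $\overline c$ of $C/B$, multiplies by an $f \in S$ of maximal positive degree with $fc \notin A$, and observes that $\overline{fc}$ lies in the socle of $C/A$ in a degree $> j$ yet maps to zero — contradicting maximality of $j$; it then uses left-exactness of $\soc$ on $0 \to B/A \to C/A \to C/B$ to get $\soc(B/A)=0$, hence $B/A=0$. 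Both arguments hinge on the Artinian hypothesis to furnish a maximal degree, but yours applies it to $B/A$ directly and bypasses the global injectivity claim, which makes it a little shorter; the paper's version has the side benefit of recording that the socle map is injective, not merely dimension-wise nondecreasing.
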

\begin{proof}
Assume that the natural map $\soc(C/A) \to \soc(C/B)$ is not injective. Let $j$ be the largest degree in which it is not injective. Then, because of our assumptions, the map $\soc(C/A)_j \to \soc(C/B)_j$ is not surjective either. This means that there exists a non-zero $\overline{c} \in \soc(C/B)_j$ such that its lift $c \in C$ satisfies $\m c \not\subseteq A$. Choose an element $f \in S$ of largest positive degree with the property that $fc \notin A$. Then $\overline{fc} \in \bigoplus_{i > j} \soc(C/A)_{i}$  
by construction, and such an element maps to zero in $\soc(C/B)$. This contradicts our choice of $j$, and concludes the proof that the map $\soc(C/A) \to \soc(C/B)$ is injective. To conclude the proof, observe that there is an exact sequence $0 \to \soc(B/A) \to \soc(C/A) \to \soc(C/B)$. As we proved that the rightmost map is injective, we have that $\soc(B/A)=0$. However, $B/A$ has finite length, so its socle cannot be zero unless $A=B$.
\end{proof}

The following is a generalization to Kustin and Ulrich's socle lemma to the non-Artinian case.
\begin{theorem} \label{soclelemma}
Let $S=\kk[x_1,\ldots,x_n]$, with the standard grading, and $F$ be a graded free $S$-module. Let $A \subseteq B$ be two graded submodules of $F$. Let $\ell_1,\ldots,\ell_t$ be a filter regular sequence for both $F/A$ and $F/B$ consisting of linear forms such that $\HF((A+(\ell_1,\ldots,\ell_t)F)^{\sat}) = \HF((B+(\ell_1,\ldots,\ell_t)F)^{\sat})$.
Furthermore, assume that $\HF(\soc(H^i_\m(F/A))) \leq \HF(\soc(H^i_\m(F/B)))$ for all $0 \leq i \leq t$. If $\min\{\Edepth(F/A),\Edepth(F/B)\} \geq t-1$, then $A=B$.
\end{theorem}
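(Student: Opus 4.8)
The plan is to argue by induction on $t$, reducing modulo the linear form $\ell_1$ at each step. For the base case $t=0$, the hypothesis reads $\HF(A^{\sat})=\HF(B^{\sat})$; since $A\subseteq B$ forces $A^{\sat}\subseteq B^{\sat}$, we obtain $A^{\sat}=B^{\sat}=:C$. Then $C/A=H^0_\m(F/A)$ has finite length, $A\subseteq B\subseteq C$, and $C/B=A^{\sat}/B=B^{\sat}/B=H^0_\m(F/B)$, so the inequality $\HF(\soc(H^0_\m(F/A)))\leq\HF(\soc(H^0_\m(F/B)))$ together with Lemma~\ref{lemmainequalitysocles Artinian} forces $A=B$. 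Assume now $t\geq 1$, set $\ell=\ell_1$ (which we take to be strictly filter regular; see the closing remark), and put $N_A=F/A^{\sat}$, $N_B=F/B^{\sat}$, so $\ell$ is a nonzerodivisor on $N_A$ and on $N_B$. Let $\overline F=F/\ell F$, a free module over the polynomial ring $\overline S=S/(\ell)$ in $n-1$ variables, and set $\overline{A'}=(A^{\sat}+\ell F)/\ell F\subseteq\overline{B'}=(B^{\sat}+\ell F)/\ell F$ inside $\overline F$, so that $\overline F/\overline{A'}=N_A/\ell N_A$ and $\overline F/\overline{B'}=N_B/\ell N_B$.

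The heart of the induction is to verify that $\overline{A'}\subseteq\overline{B'}$, the images $\overline\ell_2,\dots,\overline\ell_t$ of $\ell_2,\dots,\ell_t$ in $\overline S$, and the ambient $\overline F$ satisfy the hypotheses of the theorem with parameter $t-1$. First, $\overline\ell_2,\dots,\overline\ell_t$ is a filter regular sequence for $\overline F/\overline{A'}$ and $\overline F/\overline{B'}$: the surjection $F/(A+\ell F)\twoheadrightarrow F/(A^{\sat}+\ell F)$ has finite-length kernel, so these modules have the same $\Ass^\circ$ as $F/(A+\ell F)$. Next, the saturation hypothesis transfers because $\m$-saturation annihilates finite-length submodules; concretely $(A+(\ell_1,\dots,\ell_t)F)^{\sat}=(A^{\sat}+(\ell_1,\dots,\ell_t)F)^{\sat}$, and $\m$-saturation commutes with reduction modulo $\ell$ on submodules containing $\ell F$, whence $\HF\big((\overline{A'}+(\overline\ell_2,\dots,\overline\ell_t)\overline F)^{\sat}\big)=\HF\big((A+(\ell_1,\dots,\ell_t)F)^{\sat}\big)-\HF(\ell F)$ and similarly for $B$; the equality in $F$ therefore descends to $\overline F$. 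Finally, by Proposition~\ref{proposition Edepth modulo filter regular element}(\ref{prop H^0}) we have $\Edepth(N_A)=\Edepth(F/A)\geq t-1$, and when $t\geq 2$ Proposition~\ref{proposition Edepth modulo filter regular element}(\ref{prop modulo filter}) yields $\Edepth(\overline F/\overline{A'})\geq\Edepth(F/A)-1\geq t-2$, and likewise for $B$ (for $t=1$ the required bound $\geq t-2$ is vacuous).

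It remains to propagate the socle inequalities: the claim is that for $0\leq i\leq t-1$ there are graded isomorphisms
\[
\soc\big(H^i_{\overline\m}(\overline F/\overline{A'})\big)\;\cong\;\soc\big(H^{i+1}_\m(F/A)\big)(-1),
\]
and similarly with $B$ in place of $A$. Since $\ell$ is a nonzerodivisor on $N_A$ and $H^0_\m(N_A)=0$, the long exact local cohomology sequence of $0\to N_A(-1)\xrightarrow{\ell}N_A\to N_A/\ell N_A\to 0$ gives $H^0_\m(N_A/\ell N_A)\cong\big(0:_{H^1_\m(F/A)}\ell\big)(-1)$, whose socle is $\soc(H^1_\m(F/A))(-1)$ because $\ell\in\m$; this handles $i=0$ with no $\Edepth$ assumption. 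For $1\leq i\leq t-1$ (so $t\geq 2$ and $\Edepth(F/A)\geq 1$), Lemma~\ref{lemma positive Edepth ses} supplies short exact sequences $0\to H^i_\m(N_A/\ell N_A)\to H^{i+1}_\m(F/A)(-1)\xrightarrow{\ell}H^{i+1}_\m(F/A)\to 0$, which give the displayed isomorphism; the same works for $B$ since $\Edepth(F/B)\geq 1$. Consequently, for $0\leq i\leq t-1$ the hypothesis $\HF(\soc(H^{i+1}_\m(F/A)))\leq\HF(\soc(H^{i+1}_\m(F/B)))$ (which we may use since $i+1\leq t$) becomes exactly $\HF(\soc(H^i_{\overline\m}(\overline F/\overline{A'})))\leq\HF(\soc(H^i_{\overline\m}(\overline F/\overline{B'})))$. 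All hypotheses of the $(t-1)$-case hold, so by induction $\overline{A'}=\overline{B'}$.

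From $\overline{A'}=\overline{B'}$ we get $A^{\sat}+\ell F=B^{\sat}+\ell F$. Writing $v\in B^{\sat}$ as $v=a+\ell w$ with $a\in A^{\sat}\subseteq B^{\sat}$ forces $\ell w=v-a\in B^{\sat}$, hence $w\in B^{\sat}$ since $\ell$ is a nonzerodivisor on $F/B^{\sat}$; thus $B^{\sat}=A^{\sat}+\ell B^{\sat}$, and graded Nakayama gives $A^{\sat}=B^{\sat}$. Then $A\subseteq B\subseteq B^{\sat}=A^{\sat}$, so $B/A\subseteq A^{\sat}/A=H^0_\m(F/A)$ has finite length and $A^{\sat}/B=H^0_\m(F/B)$; Lemma~\ref{lemmainequalitysocles Artinian} applied to $A\subseteq B\subseteq A^{\sat}$ with the $i=0$ instance of the socle hypothesis yields $A=B$, closing the induction. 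The main obstacle is the saturation bookkeeping that makes the induction close: one must identify the saturation of $\overline{A'}$ taken inside $\overline F$ with the ambient saturation $(A+(\ell_1,\dots,\ell_t)F)^{\sat}$ of the hypothesis, and simultaneously control $H^0_\m$, filter regularity, and $\Edepth$ along the passages $F/A\mapsto F/A^{\sat}\mapsto F/(A^{\sat}+\ell F)$; the fact that $\m$-saturation annihilates finite-length discrepancies is the mechanism that keeps all of this consistent. A secondary delicate point is that Lemma~\ref{lemma positive Edepth ses} and Proposition~\ref{proposition Edepth modulo filter regular element}(\ref{prop modulo filter}) are applied with the specific form $\ell_1$, so one needs $\ell_1$ to be strictly—not merely—filter regular.
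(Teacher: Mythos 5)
Your proof is correct and follows essentially the same induction as the paper's: reduce modulo $\ell_1$, transfer the socle inequalities through the long exact sequence in local cohomology (using Lemma~\ref{lemma positive Edepth ses} for the higher cohomological indices, which is also why both arguments implicitly need $\ell_1$ to be \emph{strictly} filter regular, as you flag), invoke the inductive hypothesis, recover $A^{\sat}=B^{\sat}$ by the modular-law/Nakayama step, and close with Lemma~\ref{lemmainequalitysocles Artinian}. The only differences are cosmetic: you pass to $\overline S = S/(\ell_1)$ and must therefore note that $\Edepth$ is insensitive to this change of ambient ring, whereas the paper applies its inductive hypothesis to $A^{\sat}+\ell F \subseteq B^{\sat}+\ell F$ inside $F$ over $S$ itself, and you merge the paper's separate $t=1$ and $t\geq 2$ cases by observing that the $i=0$ socle isomorphism already follows from the long exact sequence with no E-depth hypothesis.
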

\begin{proof}
We prove the result by induction on $t \geq 0$, treating the cases $t=0$ and $t=1$ separately. If $t=0$ we have $A^{\sat} = B^{\sat}$ by assumption. Call such module $C$. Moreover, observe that $H^0_\m(F/A) = A^{\sat}/A = C/A$ and $H^0_\m(F/B) = B^{\sat}/B = C/B$. By assumption, we have that $\HF(\soc(C/A)) \leq \HF(\soc(C/B))$, hence $A=B$ follows from Lemma \ref{lemmainequalitysocles Artinian}.

Now assume that $t=1$, and set $\ell=\ell_1$. We observe that $H^i_\m(F/A) \cong H^i_\m(F/A^{\sat})$ for all $i >0$. We have a graded short exact sequence
\[
\xymatrix{
0 \ar[r] & F/A^{\sat}(-1) \ar[r]^-{\cdot \ell} & F/A^{\sat} \ar[r] & F/(A^{\sat}+\ell F) \ar[r] & 0,
}
\]
which induces a graded long exact sequence on local cohomology
\[
\xymatrix{
0 \ar[r] & H^0_\m(F/(A^{\sat}+\ell F)) \ar[r] & H^1_\m(F/A)(-1) \ar[r]^-{\cdot \ell} & H^1_\m(F/A) \ar[r] & \cdots
}
\]
\[
\xymatrix{
\cdots \ar[r] & H^{i-1}_\m(F/(A^{\sat}+\ell F)) \ar[r] & H^i_\m(F/A)(-1) \ar[r]^-{\cdot \ell} & H^i_\m(F/A) \ar[r] & \cdots
}
\]
Taking $\Hom_S(\kk,-)$ in the first part of the sequence gives
\[
\xymatrix{
0 \ar[r] & \soc(H^0_\m(F/(A^{\sat}+\ell F))) \ar[r] & \soc(H^1_\m(F/A))(-1) \ar[r]^-{\cdot \ell} & \soc(H^1_\m(F/A)).
}
\]
Multiplication by $\ell$ is zero on socles, hence $\soc(H^0_\m(F/(A^{\sat}+\ell F))) \cong \soc(H^1_\m(F/A))(-1)$. An analogous argument holds for $F/B$. Hence we still have an inequality $\HF(\soc(H^0_\m(F/(A^{\sat}+\ell F)))) \leq \HF(\soc(H^0_\m(F/(B^{\sat}+\ell F))))$. Let $C=(A+\ell F)^{\sat}$, which is also equal to $(B+\ell F)^{\sat}$ by assumption. Finally, observe that $H^0_\m(F/(A^{\sat}+\ell F)) = (A^{\sat}+\ell F)^{\sat}/(A^{\sat}+\ell F) = C/(A^{\sat}+\ell F)$. Similarly, $H^0_\m(F/(B^{\sat}+\ell F)) = C/(B^{\sat}+\ell F)$. Since we have 
containments $(A^{\sat} + \ell F) \subseteq (B^{\sat}+\ell F) \subseteq C$, with an appropriate inequality on socles, it follows from Lemma \ref{lemmainequalitysocles Artinian} that $A^{\sat}+\ell F= B^{\sat} + \ell F$. Since we always have a containment $A^{\sat} \subseteq B^{\sat}$, and $\ell$ is a non-zero divisor on $F/A^{\sat}$ and $F/B^{\sat}$, we must have $A^{\sat}=B^{\sat}$. To conclude, we now argue as in the case $t=0$.

We now assume that $t \geq 2$. Set $\ell=\ell_1$. Since $\min\{\Edepth(F/A),\Edepth(F/B)\} \geq t-1 >0$ the long exact sequence in local cohomology breaks into short exact sequences
\[
\xymatrix{
0 \ar[r] & H^i_\m(F/(A^{\sat}+\ell F)) \ar[r] & H^{i+1}_\m(F/A)(-1) \ar[r]^{\cdot \ell } & H^{i+1}_\m(F/A) \ar[r] & 0
}
\]
for all $i \geq 0$. In particular, taking $\Hom_S(\kk,-)$ and using again that multiplication by $\ell$ is zero on socles, we get isomorphisms $\soc(H^i_\m(F/(A^{\sat}+\ell F))) \cong \soc(H^{i+1}_\m(F/A)(-1))$ for all $i \geq 0$. A similar argument holds for $F/(B^{\sat}+\ell F)$, so that $\HF(\soc(H^i_\m(F/(A^{\sat}+\ell F)))) \leq \HF(\soc(H^i_\m(F/(B^{\sat}+\ell F))))$ for all $0 \leq i \leq t-1$. 

One can check that $\ell_2,\ldots,\ell_t$ are still a filter regular 
sequence for both $F/(A^{\sat}+\ell F)$ and $F/(B^{\sat}+\ell F)$. Moreover,
\begin{align*}
(A^{\sat}+\ell F+(\ell_2,\ldots,\ell_t)F)^{\sat} &= (A+(\ell_1,\ldots,\ell_t) F)^{\sat} \\
&=  (B+(\ell_1,\ldots,\ell_t)F)^{\sat} \\
& = (B^{\sat}+\ell F +(\ell_2,\ldots,\ell_t)F)^{\sat},
\end{align*}
and thus $\HF((A^{\sat} + \ell F + (\ell_2 ,\ldots,\ell_t) F)^{\sat}) = \HF((B^{\sat} + \ell F + (\ell_2 ,\ldots,\ell_t) F)^{\sat})$.

Since $\min\{\Edepth(F/(A^{\sat}+\ell F)),\Edepth(F/(B^{\sat}+\ell F))\} \geq t-2$ by Proposition \ref{proposition Edepth modulo filter regular element}, it follows from our inductive hypothesis that $A^{\sat}+\ell F=B^{\sat} + \ell F$. To conclude the proof we now proceed as in the case $t=1$.
\end{proof}

If both $F/A$ and $F/B$ are sequentially Cohen-Macaulay, then we only need to check the condition on the Hilbert functions of the socles of (all) local cohomology modules.  
\begin{corollary} \label{soclelemma sequentially} Let $S=\kk[x_1,\ldots,x_n]$, with the standard grading, and $F$ be a graded free $S$-module. Let $A \subseteq B$ be two graded submodules of $F$ that satisfy $\HF(\soc(H^i_\m(F/A))) \leq \HF(\soc(H^i_\m(F/B)))$ for all $i\in \ZZ$. If $F/A$ and $F/B$ are sequentially Cohen-Macaulay, then $A=B$.
\end{corollary}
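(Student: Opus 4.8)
The plan is to deduce this from Theorem \ref{soclelemma} applied with $t=n$, so the only work is to produce a suitable filter regular sequence of linear forms and then to verify the three hypotheses of that theorem. We may assume $F\neq 0$, since otherwise $A=B=0$ and there is nothing to prove; recall also that $\kk$ is assumed infinite throughout.

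First I would construct linear forms $\ell_1,\ldots,\ell_n\in S$ that form a filter regular sequence for both $F/A$ and $F/B$ simultaneously. Choosing the $\ell_i$ successively, at the $i$-th step $\ell_i$ can be taken outside the (finitely many) non-maximal associated primes of $F/(A+(\ell_1,\ldots,\ell_{i-1})F)$ and of $F/(B+(\ell_1,\ldots,\ell_{i-1})F)$; since $\kk$ is infinite such a choice exists. Next I would observe that $F/(A+(\ell_1,\ldots,\ell_n)F)$ has finite length: modding out by a filter regular linear form lowers the Krull dimension by exactly one whenever it is positive, and $\dim(F/A)\leq\dim F=n$, so after $n$ such steps the dimension is at most $0$. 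Hence $\m^k$ annihilates this quotient for $k\gg 0$, i.e.\ $\m^k F\subseteq A+(\ell_1,\ldots,\ell_n)F$, which forces $(A+(\ell_1,\ldots,\ell_n)F)^{\sat}=F$; the same holds with $B$ in place of $A$, so in particular $\HF((A+(\ell_1,\ldots,\ell_n)F)^{\sat})=\HF(F)=\HF((B+(\ell_1,\ldots,\ell_n)F)^{\sat})$, which is the compatibility condition required by Theorem \ref{soclelemma}.

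It then remains to check the other hypotheses of Theorem \ref{soclelemma} for $t=n$. Since $F/A$ and $F/B$ are sequentially Cohen-Macaulay, Proposition \ref{Proposition SCM and Edepth} gives $\Edepth(F/A)=\Edepth(F/B)=n$, so $\min\{\Edepth(F/A),\Edepth(F/B)\}=n\geq n-1=t-1$; and the hypothesis $\HF(\soc(H^i_\m(F/A)))\leq\HF(\soc(H^i_\m(F/B)))$ is assumed for all $i\in\ZZ$, in particular for $0\leq i\leq n$. Theorem \ref{soclelemma} then yields $A=B$. I do not expect a genuine obstacle: the substance is entirely contained in Theorem \ref{soclelemma}, and the only steps needing (routine) care are the genericity argument producing a common filter regular sequence of linear forms and the dimension count showing that the quotient by $n$ of them has finite length.
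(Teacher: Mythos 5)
Your proof is correct, and it is essentially the paper's argument: both reduce to Theorem \ref{soclelemma} by constructing a common linear filter regular sequence and observing that modding out by it kills the quotient modules, so both saturations equal $F$. The only (cosmetic) difference is that you apply the theorem with $t=n$, whereas the paper uses $t=d$ where $d=\dim(F/A)=\dim(F/B)$; your choice avoids the (trivial) remark that the socle inequality forces the two dimensions to agree, at the harmless cost of carrying around a slightly longer sequence.
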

\begin{proof}
Observe that the inequality between socles forces 
$F/A$ and  $F/B$ to have the same Krull dimension $d$. 
Let $\ell_1,\ldots,\ell_d$ be a filter regular sequence for both $F/A$ and $F/B$. Since $\ell_1,\ldots,\ell_d$ is a full system of parameters for $F/A$, we have that $(A+(\ell_1,\ldots,\ell_d)F)^{\sat} = F$, because $F/(A+(\ell_1,\ldots,\ell_d)F)$ has finite length. A similar statement holds for $B$ as well. The Corollary now follows from Theorem \ref{soclelemma}, since the assumptions that $\HF((A+(\ell_1,\ldots,\ell_d)F)^{\sat}) = \HF((B+(\ell_1,\ldots,\ell_d)F)^{\sat})$ and $\min\{\Edepth(F/A),\Edepth(F/B)\} \geq d-1$ are trivially satisfied.
\end{proof}

\bibliographystyle{alpha}
\bibliography{References}
\end{document}